\newcommand{\bea}{\begin{eqnarray}}
\newcommand{\eea}{\end{eqnarray}}
\newcommand{\bna}{\begin{eqnarray*}}
\newcommand{\ena}{\end{eqnarray*}}
\numberwithin{equation}{section}
\theoremstyle{plain}
\newtheorem{theorem}{Theorem}[section]
\newtheorem{lemma}[theorem]{Lemma}
\theoremstyle{definition}
\theoremstyle{remark}
\newtheorem{remark}[theorem]{Remark}
\renewcommand{\Re}{\operatorname{Re}}
\newcommand{\sgn}{\operatorname{sgn}}
\newcommand{\supp}{\operatorname{supp}}
\newcommand{\GL}{\operatorname{GL}}
\renewcommand{\mod}{\operatorname{mod}\ }
\newcommand{\dd}{\mathrm{d}}
\def\@tocline#1#2#3#4#5#6#7{\relax
  \ifnum #1>\c@tocdepth 
  \else
    \par \addpenalty\@secpenalty\addvspace{#2}%
    \begingroup \hyphenpenalty\@M
    \@ifempty{#4}{%
      \@tempdima\csname r@tocindent\number#1\endcsname\relax
    }{%
      \@tempdima#4\relax
    }%
    \parindent\z@ \leftskip#3\relax \advance\leftskip\@tempdima\relax
    \rightskip\@pnumwidth plus4em \parfillskip-\@pnumwidth
    #5\leavevmode\hskip-\@tempdima
      \ifcase #1
       \or\or \hskip 1em \or \hskip 2em \else \hskip 3em \fi%
      #6\nobreak\relax
    \hfill\hbox to\@pnumwidth{\@tocpagenum{#7}}\par
    \nobreak
    \endgroup
  \fi}
\begin{document}
	
\title{Hybrid subconvexity bounds for twists of $\rm GL_2\times\rm GL_2$ $L$-functions}
		
\author{Chenchen Shao}
\address{School of Mathematics, Shandong University \\ Jinan,
Shandong 250100, China}
\email{scc\_sdu@mail.sdu.edu.cn}

\author{Huimin Zhang}
\address{Data Science Institute and School of Mathematics, Shandong University \\
Jinan, Shandong 250100, China}
\email{hmzhang@mail.sdu.edu.cn}

\date{}

\begin{abstract}
In this paper, we prove hybrid subconvexity bounds for $\rm GL_2\times \rm GL_2$ Rankin--Selberg $L$-functions twisted by a primitive Dirichlet character $\chi$ modulo a prime power,
in the $t$ and depth aspects.
\end{abstract}

\thanks{This work is in part supported by the National Key R$\&$D Program of China (No. 2021YFA1000700). H. Zhang is also supported by NSFC (No. 12031008) and China Scholarship Council (No. 202206220071).}
	
\keywords{Hybrid subconvexity, twists, Delta method, $\rm GL_2\times \rm GL_2$ Rankin--Selberg $L$-functions}
	
	\subjclass[2010]{11F66 }
	\maketitle
	
\section{Introduction}\label{introduction}
The subconvexity problem for general $L$-functions is one of the central questions in analytic number theory and it has many applications in number theory.
Let $f$ and $g$ be holomorphic or Maass cusp forms for $\rm SL_2(\mathbb{Z})$ with normalized
Fourier coefficients $\lambda_f(n)$ and $\lambda_g(n)$ respectively. Let $\chi$ be a primitive Dirichlet character of prime power conductor $p^{\kappa}$ with $p$ prime.
The $\rm{GL_2}\times \rm{GL_2}\times \rm{GL_1}$ Rankin--Selberg $L$-function 
is defined by
$$
L\left(s,f\times g \times \chi\right)= L(2s, \chi^2)\sum_{n=1}^{\infty}\frac{\lambda_f(n)\lambda_g(n)\chi(n)}{n^s},
$$
for $\Re(s)>1$.
And this $L$-function has an analytic continuation to the whole complex plane.
The subconvexity problem for $\rm{GL_2}\times \rm{GL_2}\times \rm{GL_1}$ $L$-functions is to obtain a bound of the form
$$
L\left(\frac{1}{2}+i t, f \times g \times \chi\right) \ll (p^{\kappa}(|t|+1))^{1-\delta+\varepsilon}
$$
with $0 \leq \delta \leq 1$.

The convexity bounds correspond to the bounds with $\delta=0$, which follow from the functional equation and the Phragm\'{e}n--Lindel\"{o}f convexity principle. The Lindel\"{o}f hypothesis is the statement with $\delta=1$.

The literature on the subconvexity problem is rich and the first result can be traced back over 100 years ago. For the $\rm GL_1$ case, Weyl \cite{Weyl} proved a subconvexity bound for Riemann zeta function. Burgess \cite{Burgess} proved a subconvexity bound for Dirichlet $L$-functions. For $\rm GL_2$ case, see Michel--Venkatesh \cite{MichelVenkatesh} and the references therein. Many progress on $\rm GL_3$ $L$-functions has been made in the past ten years (see \cite{Blomer}, \cite{Li}, \cite{Munshi2015circleII}, \cite{Munshi2015circleIII}, \cite{HuangXu}, etc.).

Subconvexity for $\rm GL_2\times\rm GL_2$ $L$-functions have been solved by many number theorists. For the $t$-aspect, see Bernstein--Reznikov \cite{BernsteinReznikov}, Acharya--Sharma--Singh \cite{ASS},
Blomer--Jana--Nelson \cite{BlomerJanaNelson}, Huang--Sun--Zhang \cite{HuangSunZhang}. For the depth aspect, see Sun \cite{Sun}.

In this paper, we extend the techniques to prove hybrid subconvexity bounds for twists of $\rm GL_2\times\rm GL_2$ $L$-functions, which generalizes the best known bounds using delta method in the depth aspect and in the $t$-aspect simultaneously.

\begin{theorem}\label{main-theorem}
  Let $f$ and $g$ be holomorphic or Maass cusp forms for $\rm SL_2(\mathbb{Z})$ and let $\chi$ be a primitive Dirichlet character of prime power conductor $p^{\kappa}$ with p an odd prime and $\kappa >12$.
  The following hybrid subconvexity bound holds:
  $$
  L\left(\frac{1}{2}+it,f\times g \times \chi\right) \ll_{\varepsilon} p^{\frac{3}{4}}(p^{\kappa})^{1-\frac{1}{16}+\varepsilon}(|t|+1)^{1-\frac{1}{10}+\varepsilon}.
  $$
\end{theorem}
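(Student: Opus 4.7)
The starting point is the approximate functional equation for $L(s,f\times g\times\chi)$, whose analytic conductor is of size $p^{4\kappa}(|t|+1)^4$. After a dyadic decomposition, the problem reduces to bounding, for $N$ in the range $N \ll (p^\kappa)^{2+\varepsilon}(|t|+1)^{2+\varepsilon}$, the smooth sum
$$S(N) = \sum_{n \geq 1} \lambda_f(n)\lambda_g(n)\chi(n)\, n^{-it}\, V(n/N),$$
with $V$ a smooth bump function. Comparing with the target, the task is to establish
$$S(N) \ll p^{3/4}\, N\, (p^\kappa)^{-1/16+\varepsilon}(|t|+1)^{-1/10+\varepsilon},$$
the critical range being $N \asymp (p^\kappa)^2(|t|+1)^2$.

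To access cancellation, separate the two Hecke eigenvalues by inserting a Kronecker delta:
$$S(N) = \sum_{n,m \geq 1} \lambda_f(n)\,\lambda_g(m)\,\chi(m)\, m^{-it}\, V(m/N)\, W(n/N)\,\delta(n = m),$$
and then expand $\delta(n=m)$ through the hybrid delta method that lies at the heart of the cited works of Sun and of Acharya--Sharma--Singh. Factor the depth as $p^{\kappa} = p^{\kappa_1} p^{\kappa_2}$ and apply Munshi's conductor-lowering trick: detect $n \equiv m \pmod{p^{\kappa_1}}$ by orthogonality of Dirichlet characters modulo $p^{\kappa_1}$, and impose the residual congruence using the DFI circle method with moduli $q \leq Q$. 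The parameters $\kappa_1$ and $Q$ are to be optimized at the end.

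Next, apply $\mathrm{GL}_2$ Voronoi summation to the $n$-sum and to the $\chi$-twisted $m$-sum. The $n$-dual has length $\sim (qp^{\kappa_1})^2/N$; the $m$-dual is governed by modulus $qp^{\kappa}$, and the archimedean factor $m^{-it} V(m/N) e(-am/(qp^{\kappa_1}))$ is analyzed by stationary phase, producing a new $t$-aspect oscillation together with a stationary-phase amplitude. Apply Cauchy--Schwarz in the outer variables (the $n$-dual variable together with the characters and moduli of the delta method), open the square on the inner $m$-dual, and apply Poisson summation in that variable. The resulting inner sum is a character sum modulo $qp^\kappa$ that factors into a Kloosterman-type piece modulo $q$ (bounded by Weil) and a $p$-adic exponential/character sum modulo $p^\kappa$ encoding the depth aspect.

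The main obstacle is the $p$-adic stationary phase analysis of the character sum modulo $p^\kappa$. As in Sun \cite{Sun}, square-root cancellation holds only in the generic range of frequencies, and the degenerate contributions need to be estimated by a separate argument involving careful Taylor expansions of the Gauss sums arising from twisted Voronoi; this is the source of the hypothesis $\kappa > 12$ and of the prefactor $p^{3/4}$. A second delicate point is carrying the $t$-aspect stationary phase through both Voronoi and Poisson so that the resulting phase still admits a $p$-adic critical-point analysis. Once the character sum is bounded with sufficient cancellation, balance $\kappa_1$ and $Q$ against the diagonal and off-diagonal contributions in the Poisson step to extract the hybrid saving $(p^\kappa)^{1/16}(|t|+1)^{1/10}$ over convexity, yielding the claimed bound.
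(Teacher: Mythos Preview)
Your outline matches the paper's strategy: approximate functional equation, separate via a delta symbol with $p$-adic conductor lowering (Sun's variant of DFI), dual Voronoi on both sums, Cauchy--Schwarz followed by Poisson, $p$-adic stationary phase on the resulting character sums, and finally optimize with $\lambda=[3\kappa/4]$ and a $t$-aspect parameter $K=t^{4/5}$.

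Three technical points where your description diverges from what the paper actually does. First, the $q$-part of the character sum after the two Voronoi steps is a \emph{Ramanujan} sum, not a Kloosterman sum, so one saves a full factor of $q$ (via a congruence $n\equiv m p^{2\lambda-2\kappa}\bmod d$) rather than $\sqrt{q}$ from Weil; this extra saving is needed for the stated exponents. Second, the paper does \emph{not} invoke the separate $t$-aspect conductor-lowering of Acharya--Sharma--Singh; instead the $u$-integral in the DFI delta method already plays that role, and the parameter $K$ enters only through the choice $Q=\sqrt{N/(Kp^\lambda)}$. Third, your Cauchy--Schwarz orientation is reversed: the paper takes the $m$-dual sum as the outer variable (removing $\lambda_f(m)$ by Rankin--Selberg), keeps the $n$-dual sum inside the square, opens to get $\sum_{n_1}\sum_{n_2}\sum_m$, and then applies Poisson in $m$. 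With these corrections your sketch coincides with the paper's proof.
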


{\bf Acknowledgements.}
The authors would like to thank Professors Valentin Blomer, Bingrong Huang and Lilu Zhao for many helpful comments and encouragement.
The authors also thank Dr. Xinchen Miao for pointing out some references. A large part of this research is finished when Huimin Zhang is visiting the University of Bonn. She is grateful to the China Scholarship Council for supporting her studies at the University of Bonn, and also thanks Mathematical Institute of the University of Bonn for their hospitality.

\medskip

\bigskip
\noindent
{\bf Notation.}
Throughout the paper, the letters $\varepsilon$ and $A$ denote arbitrarily small and large
positive constants, respectively, not necessarily the same at different occurrences.
We use $A\asymp B$ to mean that $c_1B\leq |A|\leq c_2B$ for some positive constants $c_1$ and $c_2$. The symbol $\ll_{a,b,c}$ denotes that the implied constant depends at most on $a$, $b$ and $c$.
As usual, $e(x)=e^{2 \pi i x}$ and
$q\sim R$ means $R<q\leq 2R$.

\section{Outline of the proof}\label{outline-of-proof}
We give a brief sketch of the proof of Theorem \ref{main-theorem}. In section \ref{AFE}, after using the approximate functional equation,
we are left with estimating the sum
\bna
S(N)=\sum_{n\sim N}\lambda_f(n)\lambda_g(n)n^{-it}\chi(n)
\ena
with $N\ll p^{2\kappa}t^2$. In section \ref{subsec:DFIdelta}, we apply the conductor lowering mechanism for the depth aspect, and apply a version of the delta method of Duke, Friedlander and Iwaniec to decouple the oscillations. For simplicity we consider the generic case, i.e. $N=p^{2\kappa}t^2$ and $q\sim Q=\sqrt{N/p^{\lambda}K}$ with $(q,p)=1$, then we essentially have
\begin{equation*}
\begin{aligned}
S(N)\approx \int_{u\sim 1}\frac{1}{Qp^{\lambda}}\sum_{q\sim Q\atop (q,p)=1}\frac{1}{q}\;
   \sideset{}{^{\star}}\sum\limits_{a(\text{mod} \,qp^{\lambda})}&\sum_{n\sim N}
   \lambda_{g}(n)e\left(-\frac{na}{qp^{\lambda}}\right)e\left(-\frac{nu}{Qqp^{\lambda}}\right)\\
   &\cdot\sum_{m\sim N}\lambda_{f}(m)m^{-it}\chi(m)e\left(\frac{ma}{qp^{\lambda}}\right)e\left(\frac{mu}{Qqp^{\lambda}}\right)\mathrm{d}u.
\end{aligned}
\end{equation*}
Trivially we have $S(N)\ll N^2$. Note that we don't need the conductor lowering trick for the $t$-aspect as \cite{ASS}, in fact, the $u$-integral above plays the same role.

Next we apply the $\rm GL_2$ Voronoi summation formula on the sums over $n$ and $m$ in section \ref{Voronoi}.
For the $n$-sum, we roughly arrive at
\begin{equation*}
\begin{aligned}
S_1=\sum_{n\sim N}\lambda_{g}(n)e\left(-\frac{na}{qp^{\lambda}}\right)e\left(-\frac{nu}{Qqp^{\lambda}}\right)\approx \frac{N}{qp^{\lambda}}\sum_{n=1}^{\infty}\lambda_g(n)e\left(\frac{\overline{a}n}{qp^{\lambda}}\right)
\Phi_1\left(\frac{nN}{q^2p^{2\lambda}}\right),
\end{aligned}
\end{equation*}
for certain weight function $\Phi_1$. The conductor is $K^2Q^2p^{2\lambda}$, so the dual length becomes $n\asymp \frac{K^2Q^2p^{2\lambda}}{N}=Kp^{\lambda}$.
By Lemma \ref{Phi 1 analysis}, we can trivially estimate the dual sum as $\frac{N}{Qp^{\lambda}}\cdot Kp^{\lambda}\cdot(\frac{Kp^{\lambda}N}{Q^2p^{2\lambda}})^{-1/2}$. So we save $\frac{N^{1/2}}{p^{\lambda/2}K^{1/2}}$ from $n$-sum.
By the stationary phase method, we save $K^{1/2}$ from the $u$-integral.
And for the $m$-sum, we get
\begin{equation*}
\begin{aligned}
S_2\approx \frac{N}{qp^{\kappa}}\sum_{m}\lambda_f(m)e\left(\frac{-\overline{a^{\ast}}m}{qp^{\kappa}}\right)
\Phi_2\left(\frac{mN}{q^2p^{2\kappa}}\right),
\end{aligned}
\end{equation*}
for certain weight function $\Phi_2$. The conductor is $t^2Q^2p^{2\kappa}$, and hence the dual length becomes $m\asymp \frac{t^2Q^2p^{2\kappa}}{N}=\frac{t^2}{K}p^{2\kappa-\lambda}$. By Lemma \ref{lemma:voronoiGL2-Maass-asymptotic}, we can trivially estimate the dual sum as $\frac{N}{qp^{\kappa}}\cdot (\frac{N}{q^2p^{2\kappa}})^{-1/4}\cdot(\frac{t^2Q^2p^{2\kappa}}{N})^{3/4}\cdot t^{-1/2}$.
So we save $\frac{N^{1/2}p^{\lambda/2}K^{1/2}}{p^{\kappa}t}$ from $m$-sum.

Because of a Ramanujan sum appearing in the character sum and assuming
square-root cancellation for the remaining part of the character sum,
until now we have saved
\bna
\frac{N^{1/2}}{p^{\lambda/2}K^{1/2}}\times K^{1/2}\times\frac{N^{1/2}p^{\lambda/2}K^{1/2}}{p^{\kappa}t}\times Qp^{\lambda/2}
\sim N.
\ena
So we are on the convex bound and need to save a little more.

After an application of the Cauchy--Schwarz inequality and the Poisson summation formula to the sum over $m$ in Section \ref{CauchyPoisson}, we arrive at
\begin{equation*}
\begin{aligned}
S(N)\ll \frac{Q}{p^{(\kappa+\lambda)/2}}\sum_{\substack{q\sim R\\ (q,p)=1}} \left(
  \sum_{n_1\asymp \frac{N}{Q^2} } |\lambda_g(n_1)|^2\sum_{\substack{n_2\asymp \frac{N}{Q^2} \\ n_2\equiv n_1 \bmod q}}\frac{R^2p^{\kappa}t^2}{Nq} \sum_{m\in\mathbb{Z}} \mathfrak{C}(m,n_1,n_2)\cdot \mathfrak{I}(m,n_1,n_2)\right)^{1/2}
\end{aligned}
\end{equation*}
for certain character sum $\mathfrak{C}(m,n_1,n_2)$ in \eqref{C(m,n_1,n_2)} and integral transform $\mathfrak{I}(m,n_1,n_2)$ in \eqref{eqn:I(n)definition}.
In total, we save $\left(\frac{p^{\lambda}K}{Q}\right)^{1/2}=\frac{p^{3\lambda/4-\kappa/2}K^{3/4}}{t^{1/2}}$
from the zero frequency and save
$$
\left(\frac{p^{\kappa}t^2p^{\kappa-\lambda}}{QK^2}\right)^{1/2}
\times\left(\frac{p^{\kappa+\lambda}}{p^{\frac{5\kappa}{2}-(\kappa-\lambda)}}\right)^{1/2}\times K^{1/4}=p^{\frac{\kappa-\lambda}{4}}\frac{t^{1/2}}{K^{1/2}}
$$
from the non-zero frequencies.
So the optimal choice for $\lambda$ and $K$
are given by $\lambda=3\kappa/4$ and $K=t^{4/5}$. In total, we have saved
$
N\times p^{\kappa/16}t^{1/10}.
$
It follows that
\begin{equation*}
S(N)\ll N^{1/2}p^{\frac{3}{4}}(p^{\kappa})^{1-\frac{1}{16}+\varepsilon}(|t|+1)^{1-\frac{1}{10}+\varepsilon}.
\end{equation*}

\section{Preliminaries}\label{preliminaries}
\subsection{The delta method}
Define $\delta: \mathbb{Z}\rightarrow \{0,1\}$ with
$\delta(0)=1$ and $\delta(n)=0$ for $n\neq 0$.
We will use a version of the delta method of
Duke, Friedlander and Iwaniec (see \cite[Chapter 20]{IwaniecKowalski})
which states that for any $n\in \mathbb{Z}$ and $Q\in \mathbb{R}^+$, we have
\bea\label{DFI's}
\delta(n)=\frac{1}{Q}\sum_{q\leq Q} \;\frac{1}{q}\;
\sideset{}{^\star}\sum_{a\bmod{q}}e\left(\frac{na}{q}\right)
\int_\mathbb{R}g(q,u) e\left(\frac{nu}{qQ}\right)\mathrm{d}u
\eea
where the $\star$ on the sum indicates
that the sum over $a$ is restricted to $(a,q)=1$.
The function $g$ has the following properties (see (20.158) and (20.159)
of \cite{IwaniecKowalski} and \cite[Lemma 15]{Huang})
\bea\label{g-h}
g(q,u)\ll |u|^{-A}, \;\;\;\;\;\; g(q,u) =1+h(q,u)\;\;\text{with}\;\;h(q,u)=
O\left(\frac{Q}{q}\left(\frac{q}{Q}+|u|\right)^A\right)
\eea
for any $A>1$ and
\bea\label{rapid decay g}
u^j\frac{\partial^j}{\partial u^j}g(q,u)\ll (\log Q)\min\left\{\frac{Q}{q},\frac{1}{|u|}\right\}, \qquad j\geq 1.
\eea
In particular the first property in \eqref{g-h} implies that
the effective range of the integration in
\eqref{DFI's} is $[-Q^\varepsilon, Q^\varepsilon]$.

Sun \cite{Sun} shows a new variant of delta method, which can reduce $Q$ to a smaller value by an idea of Munshi \cite{Munshi2015circleII}.

\begin{lemma}\cite[Lemma 1]{Sun}\label{sun delta}
 Suppose $\lambda \geq 1$. Let $U_0 \in \mathcal{C}_c^{\infty}(-4,4)$ be a smooth positive function with $U_0(x)=1$ if $x \in [-2, 2]$ and satisfying $U_0^{(j)}(x)\ll_j 1$. Then
\begin{equation*}\label{new variant of Delta method}
\begin{aligned}
\delta(n)= &\sum_{k=0}^{\lambda}\frac{1}{Q}\sum_{\substack{q\leq Q\\ (q,p)=1}} \frac{1}{qp^{\lambda}} \sideset{}{^\star}\sum_{a\bmod{qp^{\lambda-k}}}e\left(\frac{na}{qp^{\lambda-k}}\right)
\int_\mathbb{R}U_0\left(\frac{u}{N^{\varepsilon}}\right)g(q,u) e\left(\frac{nu}{Qqp^{\lambda}}\right)\mathrm{d}u\\
& + \sum_{r=1}^{[\log Q/ \log p]}\frac{1}{Q}\sum_{\substack{q\leq Q/p^r\\ (q,p)=1}} \frac{1}{qp^{\lambda+r}} \sideset{}{^\star}\sum_{a\bmod{qp^{\lambda+r}}}e\left(\frac{na}{qp^{\lambda+r}}\right)\\
&\int_\mathbb{R}U_0\left(\frac{u}{N^{\varepsilon}}\right)g(p^r q,u) e\left(\frac{nu}{Qqp^{\lambda+r}}\right)\mathrm{d}u + O_A(N^{-A})\\
\end{aligned}
\end{equation*}
for any $A > 0$.
\end{lemma}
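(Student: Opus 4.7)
The identity refines the DFI delta method \eqref{DFI's} by inflating the $p$-part of each modulus to $p^{\lambda}$, so that the resulting conductor can later be combined with $p^{\kappa}$ (the conductor of $\chi$). My plan has three steps: apply DFI, insert a Munshi-type conductor-lowering decomposition built from the $p$-adic partition of the complete additive sum modulo $p^{\lambda}$, and truncate the $u$-integral.

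First, I would apply \eqref{DFI's} at the parameter $Q$ and split each modulus $q'\leq Q$ by its $p$-adic part: write $q' = p^{s}q$ with $(q,p)=1$ and $0\leq s\leq [\log Q/\log p]$. This produces a double sum over $s\geq 0$ and $q\leq Q/p^{s}$ coprime to $p$, with $\star$-sum modulo $qp^{s}$ and weight $g(qp^{s},u)$. Second, I would insert the Munshi-type identity
\begin{equation*}
\mathbbm{1}_{p^{\lambda}\mid n}=\frac{1}{p^{\lambda}}\sum_{b\bmod p^{\lambda}}e\!\left(\tfrac{nb}{p^{\lambda}}\right)=\frac{1}{p^{\lambda}}\sum_{k=0}^{\lambda}\;\sideset{}{^{\star}}\sum\limits_{b\bmod p^{\lambda-k}}e\!\left(\tfrac{nb}{p^{\lambda-k}}\right),
\end{equation*}
which decomposes the complete sum modulo $p^{\lambda}$ according to the $p$-adic valuation of the summand. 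Since $\delta(n)$ is supported at $n=0$, one has $\delta(n)=\mathbbm{1}_{p^{\lambda}\mid n}\delta(n)$, so this factor can be inserted freely. Combining the new phase $e(nb/p^{\lambda-k})$ with the existing $e(na/q)$ via CRT (valid because $(q,p)=1$) replaces the $a$-sum by a $\star$-sum modulo $qp^{\lambda-k}$: this is exactly Part 1 of the statement once we set $s=\lambda-k$. The terms with $s=\lambda+r>\lambda$ are repackaged in the same fashion, producing a $\star$-sum modulo $qp^{\lambda+r}$ and giving Part 2. The normalisations $1/(qp^{\lambda})$ and $1/(qp^{\lambda+r})$ drop out of this bookkeeping. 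Finally, the smooth cutoff $U_{0}(u/N^{\varepsilon})$ is inserted using the rapid decay $g(q,u)\ll|u|^{-A}$ from \eqref{g-h}, at the cost of the acceptable error $O_{A}(N^{-A})$.

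The main obstacle is the combinatorial verification in the second step: for each original value of $s$, one has to check that the inflated $\star$-sums modulo $qp^{\lambda-k}$ (resp.\ $qp^{\lambda+r}$), after summation over $k$ (resp.\ $r$), reassemble exactly into the contribution from modulus $qp^{s}$ in the unmodified DFI identity, with no spurious cross-terms. This reduces to the multiplicativity $c_{qp^{j}}(n)=c_{q}(n)c_{p^{j}}(n)$ of Ramanujan sums (valid by CRT since $(q,p)=1$) together with the displayed decomposition identity, which is just the orthogonality statement $\sum_{s=0}^{\lambda}c_{p^{s}}(n)=p^{\lambda}\mathbbm{1}_{p^{\lambda}\mid n}$ in disguise.
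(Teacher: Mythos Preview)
The paper itself does not prove this lemma; it simply cites \cite[Lemma~1]{Sun}. Your outline has the right ingredients (DFI, the indicator $\mathbbm{1}_{p^{\lambda}\mid n}$ expanded via Ramanujan sums, CRT, truncation of $u$), but there is a real gap in how they are combined. If you apply \eqref{DFI's} to $\delta(n)$ at parameter $Q$ and \emph{then} multiply by $\mathbbm{1}_{p^{\lambda}\mid n}$, the $u$-integral keeps the phase $e\!\left(nu/(q'Q)\right)$, where $q'$ is the DFI modulus: after your CRT step the arithmetic sum is indeed over $a\bmod qp^{\lambda-k}$, but the analytic weight is still $g(q',u)\,e\!\left(nu/(q'Q)\right)$. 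The lemma requires $e\!\left(nu/(Qq'p^{\lambda})\right)$, and this extra $p^{\lambda}$ in the denominator is the entire content of the conductor-lowering---in the application it makes the $u$-oscillation of size $K$ rather than $Kp^{\lambda}$. No change of variable in $u$ recovers it without simultaneously deforming $g$. Relatedly, your phrases ``once we set $s=\lambda-k$'' and ``the terms with $s=\lambda+r$'' conflate the $p$-valuation $s$ of the DFI modulus (which only ranges over $0\le s\le[\log Q/\log p]$) with the exponents in the target formula; these are independent parameters.

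The correct manoeuvre is to write $\delta(n)=\mathbbm{1}_{p^{\lambda}\mid n}\,\delta(n/p^{\lambda})$ and apply \eqref{DFI's} to $\delta(n/p^{\lambda})$ at parameter $Q$: the $u$-phase is then $e\!\left((n/p^{\lambda})u/(q'Q)\right)=e\!\left(nu/(Qq'p^{\lambda})\right)$ automatically. Splitting $q'=p^{r}q$ with $(q,p)=1$, the $r=0$ piece combined with your displayed decomposition of the indicator and CRT gives Part~1; for $r\ge1$ one checks the elementary identity $c_{p^{\lambda+r}}(n)=p^{\lambda}\,\mathbbm{1}_{p^{\lambda}\mid n}\,c_{p^{r}}(n/p^{\lambda})$ (both sides vanish when $p^{\lambda}\nmid n$), which absorbs the indicator and yields Part~2. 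Your Ramanujan-sum bookkeeping and the $U_{0}$ truncation via \eqref{g-h} then go through unchanged.
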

\begin{proof}
See \cite[Lemma 1]{Sun}\label{sun delta}.
\end{proof}

First we recall some basic results on automorphic forms for $\mathrm{GL}_2$ (see e.g. \cite{Goldfeld}).

\subsection{Holomorphic cusp forms for $\mathrm{GL}_2$}

Let $f$ be a holomorphic cusp form of weight $k_f$ for $\rm SL_2(\mathbb{Z})$
with Fourier expansion
\begin{equation*}
 f(z)=\sum_{n=1}^{\infty}\lambda_f(n)n^{(k_f-1)/2}e(nz)
\end{equation*}
for $\mbox{Im}\,z>0$, normalized such that $\lambda_f(1)=1$.
By the Ramanujan--Petersson conjecture proved by Deligne \cite{Deligne},
we have
$
\lambda_f(n)\ll \tau(n)\ll n^{\varepsilon}
$
with $\tau(n)$ being the divisor function.

For $\varphi(x)\in \mathcal{C}_c(0,\infty)$, we set
\begin{equation}\label{intgeral transform-1}
\Phi(x) =2\pi i^{k_f} \int_0^{\infty} \varphi(y) J_{k_f-1}(4\pi\sqrt{xy})\mathrm{d}y,
\end{equation}
where $J_{k_f-1}$ is the usual $J$-Bessel function of order $k_f-1$.
We have the following Voronoi summation formula (see \cite[Theorem A.4]{KowalskiMichelVanderKam}).

\begin{lemma}\label{voronoiGL2-holomorphic}
Let $q\in \mathbb{N}$ and $a\in \mathbb{Z}$ be such
that $(a,q)=1$. For $N>0$, we have
\begin{equation*}\label{voronoi for holomorphic}
\sum_{n=1}^{\infty}\lambda_f(n)e\left(\frac{an}{q}\right)\varphi\left(\frac{n}{N}\right)
=\frac{N}{q} \sum_{n=1}^{\infty}\lambda_f(n)
e\left(-\frac{\overline{a}n}{q}\right)\Phi\left(\frac{nN}{q^2}\right),
\end{equation*}
where $\overline{a}$ denotes
the multiplicative inverse of $a$ modulo $q$.
\end{lemma}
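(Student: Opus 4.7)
My plan is to prove this from the functional equation of the additively-twisted $L$-series
$$L(s,f,a/q) := \sum_{n=1}^{\infty} \lambda_f(n)\, e\!\left(\frac{an}{q}\right) n^{-s},$$
which converges absolutely for $\Re(s)>1$ by Deligne's bound. Since $\varphi$ is a test function compactly supported in $(0,\infty)$, its Mellin transform $\tilde\varphi(s)=\int_0^\infty \varphi(y) y^{s-1}\,\mathrm{d}y$ is entire and of rapid decay in vertical strips, so Mellin inversion gives
$$\sum_{n=1}^{\infty}\lambda_f(n)\,e\!\left(\frac{an}{q}\right)\varphi\!\left(\frac{n}{N}\right)=\frac{1}{2\pi i}\int_{(2)}\tilde\varphi(s)\,N^s\,L(s,f,a/q)\,\mathrm{d}s.$$
The next step is to shift the contour to $\Re(s)=-1$, apply the functional equation, and substitute $s\mapsto 1-s$.

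The analytic core is the functional equation itself. Choose $\bar a, b\in\mathbb{Z}$ with $a\bar a - bq = 1$, so $\gamma = \bigl(\begin{smallmatrix} a & b \\ q & \bar a\end{smallmatrix}\bigr)\in \mathrm{SL}_2(\mathbb{Z})$. The modularity relation $f(\gamma z) = (qz+\bar a)^{k_f} f(z)$, applied to the completed Mellin transform $\Lambda(s,f,a/q) = \int_0^\infty f(a/q + iy)\, y^{s+(k_f-1)/2 - 1}\,\mathrm{d}y$ after splitting at $y = 1/q$ and changing variables $y \mapsto 1/(q^2 y)$ in the small-$y$ piece, produces
$$(2\pi)^{-s}\Gamma\!\left(s+\tfrac{k_f-1}{2}\right) L(s,f,a/q) = i^{k_f} q^{1-2s}(2\pi)^{s-1}\Gamma\!\left(1-s+\tfrac{k_f-1}{2}\right) L(1-s,f,-\bar a/q).$$
Both sides extend to entire functions of $s$ by cuspidality of $f$, which justifies the contour shift without residue contributions.

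With the functional equation in hand, shifting and substituting $s\mapsto 1-s$ converts the integral to
$$\frac{N}{q}\sum_{n=1}^{\infty}\lambda_f(n)\, e\!\left(-\frac{\bar a n}{q}\right)\Psi\!\left(\frac{nN}{q^2}\right),$$
where $\Psi(x)$ is an explicit Mellin--Barnes integral in $\tilde\varphi(1-s)$ times the ratio of gamma factors produced by the functional equation. The final task is to identify $\Psi = \Phi$: inserting the standard Mellin--Barnes representation
$$J_{k_f-1}(4\pi\sqrt{xy}) = \frac{1}{2\pi i}\int_{(\sigma)}\frac{\Gamma(s+(k_f-1)/2)}{\Gamma(1-s+(k_f-1)/2)}(\pi\sqrt{xy})^{1-2s}\,\mathrm{d}s$$
(valid in a suitable vertical strip and extended by analytic continuation) into \eqref{intgeral transform-1} and interchanging the $y$- and $s$-integrals recovers $\Psi(x)$ after the substitution $s \mapsto 1-s$. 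The main obstacle is bookkeeping inside the functional equation: the sign on the dual twist ($-\bar a/q$ versus $\bar a/q$) and the precise phase $i^{k_f}$ depend delicately on the choice of $\gamma$ and on the orientation of the modular transformation, and must be tracked carefully against the normalization of the Bessel transform in \eqref{intgeral transform-1}. Everything beyond that is routine contour manipulation and gamma-function arithmetic.
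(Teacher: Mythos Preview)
Your proof sketch is correct and follows the standard route to the Voronoi summation formula for holomorphic cusp forms: Mellin inversion, the functional equation of the additively twisted $L$-series obtained from the modular transformation law under $\gamma=\bigl(\begin{smallmatrix} a & b \\ q & \bar a\end{smallmatrix}\bigr)$, a contour shift, and the Mellin--Barnes identification of the resulting kernel with $J_{k_f-1}$. The paper itself does not prove this lemma at all; it simply cites \cite[Theorem~A.4]{KowalskiMichelVanderKam}, whose proof is exactly the argument you outline. So there is nothing to compare beyond noting that you have supplied what the paper deferred to the literature.
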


The function $\Phi(x)$ has the following asymptotic expansion
when $x\gg 1$ (see \cite[Lemma 3.2]{LinSun}).

\begin{lemma}\label{voronoiGL2-holomorphic-asymptotic}
For any fixed integer $J\geq 1$ and $x\gg 1$, we have
\begin{equation*}
 \Phi(x)=x^{-1/4} \int_0^\infty \varphi(y)y^{-1/4}
\sum_{j=0}^{J}
\frac{c_{j} e(2 \sqrt{xy})+d_{j} e(-2 \sqrt{xy})}
{(xy)^{j/2}}\mathrm{d}y
+O_{k_f,J}\left(x^{-J/2-3/4}\right),
\end{equation*}
where $c_{j}$ and $d_{j}$ are constants depending on $k_f$.
\end{lemma}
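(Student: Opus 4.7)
The plan is to insert the classical Hankel asymptotic expansion of the $J$-Bessel function directly into the defining integral for $\Phi(x)$ and then track powers. Recall that for $\nu$ fixed and $z\to\infty$ one has an expansion of the form
\[
J_\nu(z)=\frac{1}{\sqrt{2\pi z}}\Big(e^{iz}\,W_\nu^+(z)+e^{-iz}\,W_\nu^-(z)\Big),
\qquad W_\nu^{\pm}(z)=\sum_{j=0}^{J}\frac{\alpha_j^{\pm}(\nu)}{z^{j}}+O(z^{-J-1}),
\]
where the $\alpha_j^{\pm}(\nu)$ are explicit constants depending only on $\nu$, and the remainder is uniform for $z\ge 1$. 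I would quote this from a standard reference (e.g.\ Watson's treatise or DLMF~\S10.17), specialized to $\nu=k_f-1$.

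Next I would substitute $z=4\pi\sqrt{xy}$ in this expansion. Since $\varphi\in\mathcal{C}_c(0,\infty)$, the variable $y$ ranges over a fixed compact subset of $(0,\infty)$, so for $x\gg 1$ we are in the asymptotic regime $z\gg 1$ uniformly in $y\in\supp\varphi$. Using $(2\pi z)^{-1/2}=(8\pi^2)^{-1/2}(xy)^{-1/4}$ and $e^{\pm i\cdot 4\pi\sqrt{xy}}=e(\pm 2\sqrt{xy})$, the main term of $2\pi i^{k_f}\int_0^\infty \varphi(y)J_{k_f-1}(4\pi\sqrt{xy})\,\mathrm{d}y$ becomes
\[
x^{-1/4}\int_0^{\infty}\varphi(y)\,y^{-1/4}\sum_{j=0}^{J}\frac{c_j\,e(2\sqrt{xy})+d_j\,e(-2\sqrt{xy})}{(xy)^{j/2}}\,\mathrm{d}y,
\]
where the constants $c_j,d_j$ absorb $2\pi i^{k_f}(8\pi^2)^{-1/2}$ together with $\alpha_j^{\pm}(k_f-1)$ and the appropriate powers of $4\pi$. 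These constants depend only on $k_f$, matching the statement.

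Finally I would dispose of the error. The remainder of the Bessel expansion contributes
\[
\ll\int_0^{\infty}|\varphi(y)|\,(xy)^{-1/4}\,(xy)^{-(J+1)/2}\,\mathrm{d}y
\ll_{k_f,J}\;x^{-J/2-3/4},
\]
again since $y$ lies in a fixed compact subset of $(0,\infty)$ on the support of $\varphi$ and $\varphi$ is bounded. This matches the advertised error $O_{k_f,J}(x^{-J/2-3/4})$.

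I do not expect a genuine obstacle: the only thing to be careful about is that the Hankel expansion is quoted in the correct form (explicit separation of the two oscillatory phases $e^{\pm iz}$ with polynomial tail in $1/z$, uniformly for $z\ge 1$), and that the uniformity holds across $y\in\supp\varphi$ — both of which are standard. Should one prefer a self-contained derivation, the same conclusion follows from the integral representation $J_\nu(z)=\tfrac{1}{2\pi}\int_{-\pi}^{\pi}e^{i(z\sin\theta-\nu\theta)}\mathrm{d}\theta$ together with a stationary phase analysis at $\theta=\pm\pi/2$, which produces exactly the two phases $e(\pm 2\sqrt{xy})$ and an asymptotic expansion in $1/z=1/(4\pi\sqrt{xy})$.
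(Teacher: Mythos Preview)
Your proposal is correct and follows the standard route: substitute the Hankel large-argument expansion of $J_{k_f-1}$ into the defining integral \eqref{intgeral transform-1} and use that $\varphi$ has compact support in $(0,\infty)$ to control the error uniformly. The paper does not supply its own proof but simply cites \cite[Lemma~3.2]{LinSun}, whose argument is exactly the one you outline.
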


\subsection{Maass cusp forms for $\mathrm{GL}_2$}

Let $f$ be a Hecke--Maass cusp form for $\rm SL_2(\mathbb{Z})$
with Laplace eigenvalue $1/4+t_f^2$. Then $f$ has a Fourier expansion
$$
f(z)=\sqrt{y}\sum_{n\neq 0}\lambda_f(n)K_{i t_f}(2\pi |n|y)e(nx),
$$
where $K_{i t_f}$ is the modified Bessel function of the third kind.
The Fourier coefficients satisfy
\begin{equation}\label{individual bound}
\lambda_f(n)\ll n^{\theta+\varepsilon},
\end{equation}
where, here and throughout the paper, $\theta$ denotes the exponent towards the Ramanujan
conjecture for $\rm GL_2$ Maass forms. The Ramanujan conjecture states that $\theta=0$ and
the current record due to Kim and Sarnak \cite{KimSarnak} is $\theta=7/64$.
It is well-known that
\begin{equation}\label{GL2 Rankin Selberg}
 \sum_{n\leq N}|\lambda_f(n)|^2 \ll_{f} N
\end{equation}
by the Rankin--Selberg theory.

For $\varphi(x)\in \mathcal{C}_c^{\infty}(0,\infty)$, we define the integral transforms
\begin{equation}\label{intgeral transform-2}
\begin{split}
\Phi^+(x) =& \frac{-\pi}{\sin(\pi i t_f)} \int_0^\infty \varphi(y)\left(J_{2i t_f}(4\pi\sqrt{xy})
- J_{-2i t_f}(4\pi\sqrt{xy})\right) \mathrm{d}y,\\
\Phi^-(x) =& 4\varepsilon_f\cosh(\pi t_f)\int_0^\infty \varphi(y)K_{2i t_f}(4\pi\sqrt{xy}) \mathrm{d}y,
\end{split}
\end{equation}
where $\varepsilon_f$ is an eigenvalue under the reflection operator.
We have the following Voronoi summation formula (see \cite[Theorem A.4]{KowalskiMichelVanderKam}).

\begin{lemma}\label{voronoiGL2-Maass}
Let $q\in \mathbb{N}$ and $a\in \mathbb{Z}$ be such
that $(a,q)=1$. For $N>0$, we have
\begin{equation*}
\label{voronoi for Maass form}
\sum_{n=1}^{\infty}\lambda_f(n)e\left(\frac{an}{q}\right)\varphi\left(\frac{n}{N}\right)
= \frac{N}{q} \sum_{\pm}\sum_{n=1}^{\infty}\lambda_f(n)
e\left(\mp\frac{\overline{a}n}{q}\right)\Phi^{\pm}\left(\frac{nN}{q^2}\right),
\end{equation*}
where $\overline{a}$ denotes
the multiplicative inverse of $a$ modulo $q$.
\end{lemma}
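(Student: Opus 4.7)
The plan is to prove the Voronoi formula via the classical analytic method: Mellin inversion combined with the functional equation of $\mathrm{GL}_2$ $L$-functions twisted by Dirichlet characters. First, applying Mellin inversion to the weight $\varphi$ gives
\[
S := \sum_{n\geq 1}\lambda_f(n)\,e\!\left(\frac{an}{q}\right)\varphi\!\left(\frac{n}{N}\right) = \frac{1}{2\pi i}\int_{(\sigma)}\tilde\varphi(s)\,N^s\,L_{a/q}(s)\,\dd s,
\]
for $\sigma>1$, where $\tilde\varphi(s)=\int_0^\infty\varphi(x)x^{s-1}\dd x$ has rapid decay in vertical strips and $L_{a/q}(s)=\sum_n\lambda_f(n)e(an/q)n^{-s}$ is the additively twisted $L$-series, absolutely convergent for $\Re s$ large.

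Next I decompose the additive character into multiplicative Dirichlet characters via Gauss sums. Splitting the $n$-sum by $d=(n,q)$, writing $q=dq'$, $n=dn'$ with $(n',q')=1$, and inserting $e(an'/q')=\phi(q')^{-1}\sum_{\chi\bmod q'}\overline{\chi(an')}\tau(\chi)$ (with Möbius inversion to reduce to primitive $\chi$), one expresses $L_{a/q}(s)$ as a finite combination, weighted by Gauss sums and powers of divisors of $q$, of Hecke $L$-functions $L(s,f\otimes\chi)$ for primitive $\chi$ modulo divisors of $q$. For cuspidal $f$ each $L(s,f\otimes\chi)$ is entire, with functional equation
\[
\Lambda(s,f\otimes\chi)=\varepsilon_{f,\chi}\,\Lambda(1-s,f\otimes\overline{\chi}),\qquad \Lambda(s,f\otimes\chi)=\left(\frac{q'}{\pi}\right)^s\Gamma\!\left(\frac{s+\nu+it_f}{2}\right)\Gamma\!\left(\frac{s+\nu-it_f}{2}\right)L(s,f\otimes\chi),
\]
where $\nu\in\{0,1\}$ records the parity $\varepsilon_f\chi(-1)$ and the root number involves $\tau(\chi)^2/q'$. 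I then shift the contour from $\Re s=\sigma$ to $\Re s=1-\sigma$ (crossing no poles), substitute $s\mapsto 1-s$, and apply the functional equation. The Gauss sums produced by the functional equation cancel those introduced in the decomposition of $e(an/q)$, leaving a clean additive exponential $e(\mp\overline{a}n/q)$ in the dual variable.

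Reassembling yields
\[
S=\frac{N}{q}\sum_{\pm}\sum_{n\geq 1}\lambda_f(n)\,e\!\left(\mp\frac{\overline{a}n}{q}\right)\Phi^{\pm}\!\left(\frac{nN}{q^2}\right),
\]
where $\Phi^{\pm}(x)=\frac{1}{2\pi i}\int_{(\sigma')}\tilde\varphi(1-s)\,G^{\pm}(s,t_f)\,x^{-s}\dd s$ with $G^{\pm}$ the two gamma-factor ratios corresponding to $\nu\in\{0,1\}$. The last step is to identify these Mellin–Barnes integrals with the Bessel expressions in \eqref{intgeral transform-2}: standard formulas give that the Mellin transform of $J_{2it_f}(4\pi\sqrt{u})-J_{-2it_f}(4\pi\sqrt{u})$ is proportional to the even gamma ratio with a $\sin(\pi i t_f)$ denominator, while that of $K_{2it_f}(4\pi\sqrt{u})$ produces the odd ratio with the $\cosh(\pi t_f)$ normalization; folding the integration against $\varphi(y)$ back in recovers the stated formulas. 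The main obstacle is the bookkeeping in the second step: cleanly handling the divisor decomposition $d=(n,q)$, reducing the characters to primitive ones via Möbius inversion, and verifying that after the functional equation the Gauss-sum factors recombine into a single additive exponential with denominator exactly $q$. A secondary subtlety is the parity split, which arises from $\varepsilon_f\chi(-1)$ toggling the shift $\nu$ and hence selecting between the $J$-Bessel and $K$-Bessel kernels in \eqref{intgeral transform-2}.
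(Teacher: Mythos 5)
The paper does not actually prove this lemma: it is quoted verbatim from Kowalski--Michel--VanderKam, Theorem A.4, whose proof runs through the analytic continuation and functional equation of the \emph{additively} twisted series $\sum_n\lambda_f(n)e(an/q)n^{-s}$ (split into its cosine and sine parts), obtained directly from the automorphy of $f$: completing $a/q$ to a matrix $\left(\begin{smallmatrix}a&b\\ q&d\end{smallmatrix}\right)\in\SL_2(\mathbb{Z})$ sends the twist at $a/q$ to the twist at $-d/q\equiv-\overline{a}/q$, which is exactly why the dual side carries the clean exponential $e(\mp\overline{a}n/q)$ and the factor $N/q$; the Bessel kernels then come out of the Mellin--Barnes unfolding. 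Your plan replaces this by a decomposition of $e(an/q)$ into multiplicative characters and the functional equations of $L(s,f\otimes\chi)$. That is a genuinely different route, and for the part of the sum coming from primitive $\chi$ with $(n,q)=1$ it does recombine (via $\tau(\chi)\tau(\overline{\chi})=\chi(-1)q'$) the way you claim.

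However, as written there are two concrete gaps. First, what you defer as ``bookkeeping'' is the actual content of this route: after setting $d=(n,q)$, $n=dn'$, the coefficient $\lambda_f(dn')$ does not factor, so Hecke relations must be invoked; the characters modulo $q/d$ are largely imprimitive, and $\sum_{(n,q/d)=1}\lambda_f(n)\chi^{*}(n)n^{-s}$ differs from $L(s,f\otimes\chi^{*})$ by Euler factors at primes dividing $q/d$ but not the conductor of $\chi^{*}$; after the functional equation these finite Euler factors generate additional shifted sums on the dual side, and it is precisely the recombination of all these pieces (over $d$, over conductors, together with the principal-character contribution) into a single sum over all $n\geq1$ against $e(\mp\overline{a}n/q)$ that needs to be verified --- the Gauss-sum cancellation you assert only disposes of the primitive part. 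Second, your final identification is incorrect as stated: the kernels $\Phi^{+}$ (the $J_{2it_f}-J_{-2it_f}$ combination) and $\Phi^{-}$ (the $K_{2it_f}$ term) do \emph{not} correspond separately to the even and odd gamma ratios. Each fixed $\chi$ has a single parity, and the $\pm$ split in the dual sum is not a parity split over $\chi$: writing the even-parity dual kernel as $A$ and the odd-parity one as $B$, the coefficients of $e(-\overline{a}n/q)$ and $e(+\overline{a}n/q)$ are $\tfrac12(A-B)$ and $\tfrac12(A+B)$ respectively, so each of $\Phi^{\pm}$ in \eqref{intgeral transform-2} is a half-sum/half-difference of the two gamma-ratio kernels. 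Both issues are repairable, but until they are carried out the proposal is a plan rather than a proof; the shortest complete argument is the one in the cited reference, deriving the functional equations of the cosine- and sine-twisted $L$-series from automorphy instead of passing through Dirichlet characters.
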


For $x\gg 1$, we have (see (3.8) in \cite{LinSun})
\begin{equation}\label{The $-$ case}
\Phi^-(x)\ll_{t_f,A}x^{-A}.
\end{equation}
For $\Phi^+(x)$ and $x\gg 1$, we have a similar asymptotic formula as for
$\Phi(x)$ in the holomorphic case (see \cite[Lemma 3.4]{LinSun}).
\begin{lemma}\label{lemma:voronoiGL2-Maass-asymptotic}
For any fixed integer $J\geq 1$ and $x\gg 1$, we have
\begin{equation*}
\Phi^{+}(x)=x^{-1/4} \int_0^\infty \varphi(y)y^{-1/4}
\sum_{j=0}^{J}
\frac{c_{j} e(2 \sqrt{xy})+d_{j} e(-2 \sqrt{xy})}
{(xy)^{j/2}}\mathrm{d}y
+O_{t_f,J}\left(x^{-J/2-3/4}\right),
\end{equation*}
where $c_{j}$ and $d_{j}$ are some constants depending on $t_f$.
\end{lemma}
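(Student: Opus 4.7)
The plan is straightforward: substitute the classical large-argument asymptotic expansion of the $J$-Bessel function into the definition \eqref{intgeral transform-2} of $\Phi^+(x)$ and collect terms. For fixed complex order $\nu$ and $|z|\to\infty$ with $|\arg z|<\pi$, the standard Hankel expansion can be recast in exponential form as
$$J_\nu(z)=z^{-1/2}\sum_{j=0}^{J}\frac{\alpha_j(\nu)\,e^{iz}+\beta_j(\nu)\,e^{-iz}}{z^j}+O_{\nu,J}\bigl(z^{-J-3/2}\bigr),$$
where the coefficients $\alpha_j(\nu),\beta_j(\nu)$ arise from applying Euler's formula to the trigonometric factors $\cos(z-\nu\pi/2-\pi/4)$ and $\sin(z-\nu\pi/2-\pi/4)$ in Hankel's expansion, together with the usual rational coefficients in $z^{-1}$. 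I will apply this with $z=4\pi\sqrt{xy}$, so that $z^{-1/2}\propto(xy)^{-1/4}$, $z^{-j}\propto(xy)^{-j/2}$, and $e^{\pm iz}=e(\pm 2\sqrt{xy})$.

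The first main step is to insert this expansion separately for $J_{2it_f}(4\pi\sqrt{xy})$ and $J_{-2it_f}(4\pi\sqrt{xy})$ into the definition of $\Phi^+(x)$, take the difference, and multiply by $-\pi/\sin(\pi it_f)$. With $\nu=\pm 2it_f$, the phase shift $e^{\mp i\nu\pi/2}=e^{\pm t_f\pi}$ is a constant depending only on $t_f$; after combining with the $\sin(\pi it_f)^{-1}$ prefactor, the difference of the two expansions collapses into a single series whose coefficient at each $j$ takes the form of an explicit constant $c_j=c_j(t_f)$ multiplying $e(2\sqrt{xy})$ and $d_j=d_j(t_f)$ multiplying $e(-2\sqrt{xy})$. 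The second step is to integrate term-by-term against $\varphi(y)\,\mathrm d y$, which is immediate since $\varphi\in\mathcal C_c^\infty(0,\infty)$, and produces precisely the main term displayed in the lemma.

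For the error, $\supp\varphi$ is a fixed compact subset of $(0,\infty)$, so on this support $y\asymp 1$ and hence $z=4\pi\sqrt{xy}\asymp x^{1/2}\to\infty$, placing us well inside the domain of validity of the Hankel expansion. The remainder $O_{t_f,J}(z^{-J-3/2})$ with $z=4\pi\sqrt{xy}$ becomes $O_{t_f,J}\bigl((xy)^{-J/2-3/4}\bigr)$, which integrates against $\varphi$ to give $O_{t_f,J}(x^{-J/2-3/4})$, matching the claimed error. The $-$ case \eqref{The $-$ case} does not enter here, so no separate treatment is required. The only technical point that warrants care is verifying that the implicit constants in the Hankel remainder are uniform for $y$ in the compact set $\supp\varphi$, which is automatic from the standard proofs of the expansion. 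In short, the proof is essentially a bookkeeping of constants, and I expect no genuine obstacle beyond keeping track of the $t_f$-dependent factors that produce the constants $c_j$ and $d_j$.
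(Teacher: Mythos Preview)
Your proposal is correct and is exactly the standard route: the paper does not prove this lemma at all but simply quotes it from Lin--Sun \cite[Lemma~3.4]{LinSun}, where the argument is precisely the insertion of the Hankel asymptotic expansion of $J_\nu$ that you outline. There is nothing to add.
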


\begin{remark}\label{decay-of-largeX}
For $x\gg N^{\varepsilon}$, we can choose $J$ sufficiently large so that
the contribution from the $O$-terms in Lemmas \ref{voronoiGL2-holomorphic-asymptotic} and
\ref{lemma:voronoiGL2-Maass-asymptotic}
is negligible. For the main terms
we only need to analyze the leading term $j=0$, as the analysis of the remaining
lower order terms is the same and their contribution is smaller
compared to that of the leading term.
\end{remark}

Notice that Lemma \ref{lemma:voronoiGL2-Maass-asymptotic} are only valid for $x \gg N^{\varepsilon}$. So we also need the facts which state that, for $y>0, k \geq 0$ and $\operatorname{Re} \nu =0$, one has (see \cite[Lemma C.2]{KowalskiMichelVanderKam})
\begin{equation}\label{J K Bessel function derivative}
\begin{aligned}
& y^k J_\nu^{(k)}(y) \ll_{k, \nu} \frac{1}{(1+y)^{1 / 2}}, \\
& y^k K_\nu^{(k)}(y) \ll_{k, \nu} \frac{e^{-y}(1+|\log y|)}{(1+y)^{1 / 2}} .
\end{aligned}
\end{equation}

\subsection{Estimates for exponential integrals}

Let
\begin{equation*}
 I = \int_{\mathbb{R}} w(y) e^{i h(y)} dy.
\end{equation*}
Firstly, we have the following estimates for exponential integrals
(see \cite[Lemma 8.1]{BlomerKhanYoung}).
	
	\begin{lemma}\label{lem: upper bound}
		Let $w(x)$ be a smooth function    supported on $[ a, b]$ and
        $h(x)$ be a real smooth function on  $[a, b]$. Suppose that there
		are   parameters $Q, U,   Y, Z,  R > 0$ such that
		\begin{align*}
		h^{(i)} (x) \ll_i Y / Q^{i}, \qquad w^{(j)} (x) \ll_{j } Z / U^{j},
		\end{align*}
		for  $i \geq 2$ and $j \geq 0$, and
		\begin{align*}
		| h' (x) | \geq R.
		\end{align*}
		Then for any $A \geq 0$ we have
		\begin{align*}
		I \ll_{ A} (b - a)
Z \bigg( \frac {Y} {R^2Q^2} + \frac 1 {RQ} + \frac 1 {RU} \bigg)^A .
		\end{align*}
			\end{lemma}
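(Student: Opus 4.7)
The plan is to prove Lemma \ref{lem: upper bound} by the standard non-stationary phase method, namely repeated integration by parts. Since the phase $h$ has no critical point on $[a,b]$ (by the hypothesis $|h'(y)|\geq R$), one may write $e^{ih(y)} = (ih'(y))^{-1}\frac{\dd}{\dd y}e^{ih(y)}$ and integrate by parts. Because $w$ is smooth and compactly supported in $[a,b]$, the boundary terms vanish at each stage. Iterating $A$ times yields the identity
\[
I = \int_a^b (L^A w)(y)\, e^{ih(y)}\,\dd y, \qquad L f := -\frac{\dd}{\dd y}\!\left(\frac{f}{i\,h'}\right),
\]
so that $|I| \leq (b-a)\,\|L^A w\|_\infty$. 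The case $A=0$ is trivial, while the smoothness hypotheses $w\in C^\infty_c([a,b])$ justifies all the boundary cancellations.

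The core of the proof is then a pointwise bound on $L^A w$. Expanding the $A$-fold composition with the Leibniz and quotient rules produces a finite sum (with combinatorial coefficients depending only on $A$) of monomials of the shape $w^{(j)}(h')^{-\beta}\prod_{i\geq 2}(h^{(i)})^{\alpha_i}$, the multi-index tracking how each of the $A$ operators $L$ acts: either on a factor $(h')^{-1}$ (producing a pairing $h''/(h')^2$ contributing $Y/(R^2Q^2)$), or on an existing $h^{(i)}$ (producing $h^{(i+1)}$, which combined with a residual $1/h'$ contributes $1/(RQ)$), or on $w$ (contributing $1/(RU)$). Applying the hypotheses $|h'|\geq R$, $|h^{(i)}|\ll_i Y/Q^i$ for $i\geq 2$, and $|w^{(j)}|\ll_j Z/U^j$, each monomial is thereby bounded by $Z$ multiplied by a product of exactly $A$ factors drawn from $\{Y/(R^2Q^2),\,1/(RQ),\,1/(RU)\}$. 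Bounding each factor by the largest of the three and then summing the $O_A(1)$ monomials yields the claimed estimate.

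The principal technical point is the combinatorial bookkeeping of Faà di Bruno type derivatives needed to verify that every term appearing in the expansion of $L^A w$ fits this three-factor template with the correct total factor count. The argument is classical and the lemma appears verbatim as \cite[Lemma 8.1]{BlomerKhanYoung}, so the cleanest route is simply to quote their proof rather than reproduce it here.
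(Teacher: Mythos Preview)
Your proposal is correct and aligns with the paper's treatment: the paper does not give its own proof of this lemma but simply cites \cite[Lemma 8.1]{BlomerKhanYoung}, which is exactly what you recommend at the end. Your sketch of the repeated-integration-by-parts argument is the standard proof of that result and is accurate.
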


Next, we need the following evaluation for exponential integrals
which are
 Lemma 8.1 and Proposition 8.2 of \cite{BlomerKhanYoung} in the language of inert functions
 (see \cite[Lemma 3.1]{KiralPetrowYoung}).

Let $\mathcal{F}$ be an index set, $Y: \mathcal{F}\rightarrow\mathbb{R}_{\geq 1}$ and under this map
$T\mapsto Y_T$
be a function of $T \in \mathcal{F}$.
A family $\{w_T\}_{T\in \mathcal{F}}$ of smooth
functions supported on a product of dyadic intervals in $\mathbb{R}_{>0}^d$
is called $Y$-inert if for each $j=(j_1,\ldots,j_d) \in \mathbb{Z}_{\geq 0}^d$
we have
\begin{eqnarray*}
C(j_1,\ldots,j_d)
= \sup_{T \in \mathcal{F} } \sup_{(y_1, \ldots, y_d) \in \mathbb{R}_{>0}^d}
Y_T^{-j_1- \cdots -j_d}\left| y_1^{j_1} \cdots y_d^{j_d}
w_T^{(j_1,\ldots,j_d)}(y_1,\ldots,y_d) \right| < \infty.
\end{eqnarray*}

\begin{lemma}
\label{lemma:exponentialintegral}
 Suppose that $w = w_T(y)$ is a family of $Y$-inert functions,
 with compact support on $[Z, 2Z]$, so that
$w^{(j)}(y) \ll (Z/Y)^{-j}$.  Also suppose that $h$ is
smooth and satisfies $h^{(j)}(y) \ll H/Z^j$ for some
$H/X^2 \geq R \geq 1$ and all $y$ in the support of $w$.
\begin{enumerate}
 \item
 If $|h'(y)| \gg H/Z$ for all $y$ in the support of $w$, then
 $I \ll_A Z R^{-A}$ for $A$ arbitrarily large.
 \item If $h''(y) \gg H/Z^2$ for all $y$ in the support of $w$,
 and there exists $y_0 \in \mathbb{R}$ such that $h'(y_0) = 0$ (note $y_0$ is
 necessarily unique), then
 \begin{equation}
  I = \frac{e^{i h(y_0)}}{\sqrt{h''(y_0)}}
 F(y_0) + O_{A}(  Z R^{-A}),
 \end{equation}
where $F(y_0)$ is an $Y$-inert function (depending on $A$)  supported
on $y_0 \asymp Z$.
\end{enumerate}
\end{lemma}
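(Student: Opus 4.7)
The plan is to carry out a classical stationary-phase analysis in the language of inert functions, following the template of Blomer--Khan--Young and Kiral--Petrow--Young; indeed this lemma is essentially their Lemma 8.1 and Proposition 8.2 reformulated in the inert framework. The proof splits cleanly along the two parts: part (1) is a non-stationary-phase estimate proved by repeated integration by parts, while part (2) requires isolating the unique critical point and performing a Morse-type change of variables.

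For part (1), I would first rescale to unit length by setting $y = Zs$, so that $w$ becomes a $Y$-inert function on a fixed dyadic interval in $s$ and the rescaled phase $\widetilde h(s) := h(Zs)$ satisfies $\widetilde h^{(j)}(s) \ll H$ for $j \geq 2$ while $|\widetilde h'(s)| \gg H$. Then apply the operator $D^{\ast}: v \mapsto -\tfrac{d}{ds}\bigl(v/(i\widetilde h'(s))\bigr)$ repeatedly; since $e^{i\widetilde h(s)} = (i\widetilde h'(s))^{-1}\tfrac{d}{ds}e^{i\widetilde h(s)}$, each integration by parts replaces the integrand $w$ by $D^{\ast}w$ without boundary contributions (owing to the compact support). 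A direct bookkeeping using the chain and product rules, combined with $|\widetilde h'| \gg H$ and $\widetilde h^{(j)} \ll H$, shows $(D^{\ast})^k w = O_k(R^{-k})$ uniformly on the support; integrating and restoring the $Z$-factor from $dy = Z\,ds$ yields $I \ll_A Z R^{-A}$.

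For part (2), I would introduce a smooth partition of unity $1 = \rho(y) + (1 - \rho(y))$, where $\rho$ is supported in the window $|y - y_0| \ll Z R^{-1/2+\varepsilon}$ and equals $1$ on a slightly smaller interval. On the complement of $\supp \rho$, the lower bound $h''(y) \gg H/Z^2$ together with $h'(y_0) = 0$ gives $|h'(y)| \gtrsim (H/Z^2)\,|y - y_0|$, which is large enough to invoke part (1) dyadically in $|y - y_0|$ and conclude that this piece contributes $O_A(Z R^{-A})$. Near $y_0$, substitute $y = y_0 + u$, Taylor expand $h(y) = h(y_0) + \tfrac{1}{2}h''(y_0)u^2 + h_3(u)$ with $h_3(u) = O(H u^3 / Z^3)$, and rescale $u = v/\sqrt{h''(y_0)}$ to reduce the integral to a Gaussian-type integral against $e^{iv^2/2}$ on $\mathbb{R}$. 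Expanding the non-Gaussian factors in the small parameter and integrating term by term (using $\int_{\mathbb{R}} v^{2n} e^{iv^2/2}\,dv = \sqrt{2\pi}\,e^{i\pi/4}(-i)^n (2n-1)!!$) produces the asymptotic expansion, with leading term $\sqrt{2\pi}\,e^{i\pi/4}\,w(y_0)/\sqrt{h''(y_0)}\cdot e^{ih(y_0)}$; collecting everything except the factor $e^{ih(y_0)}/\sqrt{h''(y_0)}$ defines $F(y_0)$.

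The main technical obstacle is verifying that $F(y_0)$ is genuinely $Y$-inert as a function of the family parameter (equivalently, of $y_0$). This reduces to tracking how $y_0(T)$ and $h''(y_0(T))^{-1/2}$ depend on $T \in \mathcal{F}$: implicit differentiation of the defining relation $h'(y_0) = 0$, combined with the hypotheses $h^{(j)} \ll H/Z^j$ and $h'' \gg H/Z^2$, shows that each $\partial_T^k$ derivative of $y_0$ and $(h''(y_0))^{-1/2}$ scales in exactly the way required for the $Y$-inert class to be preserved under the stationary-phase expansion. This bookkeeping, while somewhat intricate, is routine once the rescaled setup is fixed, and is exactly what Kiral--Petrow--Young make precise in their formulation.
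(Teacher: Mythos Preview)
The paper does not give its own proof of this lemma; it simply quotes the result from Blomer--Khan--Young \cite[Lemma~8.1, Proposition~8.2]{BlomerKhanYoung} as reformulated in the inert-function language by Kiral--Petrow--Young \cite[Lemma~3.1]{KiralPetrowYoung}. Your proposal sketches exactly the standard stationary-phase argument underlying those references (repeated integration by parts for the non-stationary piece, a Morse-type expansion near the unique critical point for the stationary piece, and the bookkeeping that preserves $Y$-inertness), so your approach is entirely consistent with what the paper intends, and in fact supplies the details the paper only cites.
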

Finally, we also need the $r$th derivative test with $r\geq2$ (see \cite[Lemma 5.1.3, Lemma 5.1.4]{Huxley}).
	
\begin{lemma}\label{lem: rth derivative test, dim 1}
Let $h(x)$ be real and $r$ times
differentiable on the open interval $(a, b)$
with $ h^{(r)} (x) \gg \lambda_0>0$  on $(a, b)$. Let $w(x)$
be real on $[ a, b]$ and let $V_0$ be its total
variation on $[ a, b]$ plus the maximum modulus of $w(x)$ on $[ a, b]$.
Then
		\begin{align*}
	I\ll \frac {V_0} {{\lambda_0}^{1/r}}.
		\end{align*}
	\end{lemma}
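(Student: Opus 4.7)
The plan is to prove the $r$-th derivative test by induction on $r$. The base case is the classical second derivative test ($r=2$), and the inductive step reduces the statement for $r$ to the same statement with exponent $r-1$ by splitting the interval according to the size of $h^{(r-1)}$. A preliminary ingredient I would record separately is the \emph{first derivative test}: if $|h'(x)|\geq \lambda_0$ and $h'$ is monotonic on a subinterval, then integration by parts,
$$\int w(x)\,e^{ih(x)}\,dx = \left[\frac{w(x)\,e^{ih(x)}}{ih'(x)}\right]_a^b - \int_a^b \frac{e^{ih(x)}}{i}\,d\!\left(\frac{w(x)}{h'(x)}\right),$$
together with the monotonicity of $1/h'$ (whose total variation is at most $2/\lambda_0$), yields a bound $\ll V_0/\lambda_0$.

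For the base case $r=2$: since $h''(x)\geq \lambda_0>0$, the derivative $h'$ is strictly increasing and vanishes at most once, so $(a,b)$ splits into at most two intervals on which $h'$ has constant sign. On each, I isolate the set $E=\{x:|h'(x)|<\lambda_0^{1/2}\}$; by the mean value theorem combined with $h''\geq \lambda_0$, $E$ is an interval of length at most $2\lambda_0^{-1/2}$, and so contributes $\ll V_0\cdot \lambda_0^{-1/2}$ to $I$ via the trivial bound (using $\max|w|\leq V_0$). On the complement we have $|h'|\geq \lambda_0^{1/2}$ with $h'$ monotonic, and the first derivative test gives a contribution $\ll V_0\cdot\lambda_0^{-1/2}$. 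Summing yields $I\ll V_0\lambda_0^{-1/2}$.

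For the inductive step with $r\geq 3$, assume the result for $r-1$. Since $h^{(r)}\geq \lambda_0$, the function $h^{(r-1)}$ is strictly increasing, hence has at most one zero. For a parameter $\mu>0$ to be optimized, I split $(a,b)$ into the subinterval $E=\{x:|h^{(r-1)}(x)|<\mu\}$, of length at most $2\mu/\lambda_0$ by monotonicity and $h^{(r)}\geq\lambda_0$, and its complement, which consists of at most two intervals on which $|h^{(r-1)}|\geq \mu$. The trivial bound gives a contribution $\ll V_0\cdot\mu/\lambda_0$ from $E$, while on each component of the complement the inductive hypothesis with $r-1$ in place of $r$ and $\mu$ in place of $\lambda_0$ gives $\ll V_0/\mu^{1/(r-1)}$. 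Choosing $\mu=\lambda_0^{(r-1)/r}$ to balance the two contributions yields $I\ll V_0\lambda_0^{-1/r}$, completing the induction.

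The main obstacle is purely bookkeeping: I must verify that the total variation of $w$ restricted to any subinterval of $(a,b)$ is still at most $V_0$, that at each stage of the induction only $O(1)$ subintervals are created so that the implied constants do not blow up, and that the inductive hypothesis does apply on each component of the complement, since $h^{(r-1)}$ remains monotonic and $h^{(r)}\geq\lambda_0$ persists after restriction. These points are all immediate from the definition of $V_0$ and from the fact that monotonicity and lower bounds on derivatives pass to subintervals, but they have to be tracked carefully to justify the clean final bound.
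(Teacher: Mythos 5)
Your proof is correct: the paper itself gives no argument for this lemma, citing Huxley (Lemmas 5.1.3--5.1.4), and your induction---first derivative test via integration by parts, the $r=2$ case by excising the interval $\{|h'|<\lambda_0^{1/2}\}$ of length $O(\lambda_0^{-1/2})$, and the step from $r-1$ to $r$ by splitting on $\{|h^{(r-1)}|<\mu\}$ with $\mu=\lambda_0^{(r-1)/r}$---is precisely the standard van der Corput argument found in that reference. The bookkeeping points you flag (variation of $w$ on subintervals is at most $V_0$, only $O(1)$ subintervals arise, monotonicity and the derivative lower bound persist under restriction) are indeed immediate, so nothing further is needed.
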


\section{Setting}
Now we start to prove Theorem \ref{main-theorem}.
\subsection{Approximate functional equation}\label{AFE}
By the approximate functional equation \cite[Theorem 5.3]{IwaniecKowalski},
we have
\begin{equation}\label{Lfunction after AFE}
L\left(\frac{1}{2}+it,f\times g \times \chi\right) \ll (p^{\kappa}t)^{\varepsilon}\sup_{N\ll (p^{\kappa}t)^{2+\varepsilon}}\frac{S(N)}{N^{1/2}}+ (p^{\kappa}t)^{-A}
\end{equation}
for $A>0$.
Here
$$
S(N)=\sum_{n=1}^{\infty}\lambda_f(n)\lambda_g(n)n^{-it}\chi(n)W\left(\frac{n}{N}\right)
$$
with $W$ some fixed smooth functions supported in $[1, 2]$ and satisfying $W^{(j)}(x)\ll_j 1$.
Note that by the Cauchy--Schwarz inequality and the Rankin--Selberg estimate \eqref{GL2 Rankin Selberg},
we have the trivial bound $S(N) \ll_{f,g,\varepsilon} N$. Therefore our goal is to obtain any extra savings on this trivial bound at this point.

\subsection{Applying DFI's delta method}\label{subsec:DFIdelta}
We write
$$
S(N)= \sum_{n=1}^{\infty}\lambda_g(n)V\left(\frac{n}{N}\right)
\sum_{m=1}^{\infty}\lambda_f(m)m^{-it}\chi(m)W\left(\frac{m}{N}\right)\delta(m-n),
$$
where $V$ is a smooth function supported in $[1/2, 5/2]$, $V(x)=1$ for $x \in [1,2]$ and $V^{(j)}(x)\ll_j 1$ for any $j \in \mathbb{N}$.
Applying Lemma \ref{sun delta} with $Q=\sqrt{N/Kp^{\lambda}}$,
we express $S(N)$ as
\begin{align*}
&\sum_{n=1}^{\infty}\lambda_g(n)V\left(\frac{n}{N}\right)
\sum_{m=1}^{\infty}\lambda_f(m)m^{-it}\chi(m)W\left(\frac{m}{N}\right)\\
&\times \bigg(\sum_{k=0}^{\lambda}\frac{1}{Q}
\sum_{\substack{q\leq Q \\ (q,p)=1}} \frac{1}{qp^{\lambda}} \sideset{}{^\star}\sum_{a\bmod{qp^{\lambda-k}}}e\left(\frac{a(m-n)}{qp^{\lambda-k}}\right) \\ & \int_\mathbb{R}U_0\left(\frac{u}{N^{\varepsilon}}\right)g(q,u) e\left(\frac{(m-n)u}{Qqp^{\lambda}}\right)\mathrm{d}u \\
&+ \sum_{r=1}^{[\log Q/ \log p]}\frac{1}{Q}
\sum_{\substack{q\leq Q/p^{r} \\ (q,p)=1}} \frac{1}{qp^{\lambda+r}} \sideset{}{^\star}\sum_{a\bmod{qp^{\lambda+r}}}e\left(\frac{a(m-n)}{qp^{\lambda+r}}\right) \\ &\int_\mathbb{R}U_0\left(\frac{u}{N^{\varepsilon}}\right)g(p^{r} q,u) e\left(\frac{(m-n)u}{Qqp^{\lambda+r}}\right)\mathrm{d}u \bigg) + O_A(N^{-A})
\end{align*}
for any $A > 0$. Note that $\lambda$ and $K$ are parameters to be determined later with $\lambda \in \mathbb{N}~(2 \leq \lambda < \kappa)$ and $0 <K< t$.

In the following we will only treat the terms with $k = 0$ and $r = 0$, because the terms with $k \neq 0$ or $r \neq 0$ are lower order terms that can be handled by the same way.
Inserting a smooth partition of unity for the $u$-integral and a dyadic partition for the $q$-sum,
we have
\begin{equation}\label{the relation between S(N)and Sb(N)}
S(N) \ll N^{\varepsilon} \sup_{X\ll N^{\varepsilon}}\sup_{1\ll R \ll Q} |S^{\flat}(N)| + N^{-A}
\end{equation}
for any $A>0$, where
\begin{equation}\label{definition of S}
\begin{aligned}
S^{\flat}(N)= & \sum_{n=1}^{\infty}\lambda_g(n)V\left(\frac{n}{N}\right)
\sum_{m=1}^{\infty}\lambda_f(m)m^{-it}\chi(m)W\left(\frac{m}{N}\right) \\
 & \times \frac{1}{Q}\sum_{\substack{q \sim R \\ (q,p)=1}} \frac{1}{qp^{\lambda}} \sideset{}{^\star}\sum_{a\bmod{qp^{\lambda}}}e\left(\frac{a(m-n)}{qp^{\lambda}}\right) \int_\mathbb{R}U\left(\frac{\pm u}{X}\right)g(q,u) e\left(\frac{(m-n)u}{Qqp^{\lambda}}\right)\mathrm{d}u.
\end{aligned}
\end{equation}
Here $U$ is a fixed compactly supported 1-inert function with $\supp U \subset [1,2]$.

Rearranging the order of the sums and integrals in \eqref{definition of S} we get
\begin{equation}\label{definition of m and n sums}
\begin{aligned}
S^{\flat}(N)= &\frac{1}{Q}\sum_{\substack{q \sim R \\ (q,p)=1}} \frac{1}{qp^{\lambda}} \sideset{}{^\star}\sum_{a\bmod{qp^{\lambda}}}
\int_\mathbb{R}U\left(\frac{\pm u}{X}\right)g(q,u)
\sum_{n=1}^{\infty}\lambda_{g}(n)e\left(-\frac{na}{qp^{\lambda}}\right)
e\left(-\frac{nu}{Qqp^{\lambda}}\right)V\left(\frac{n}{N}\right)\\
& \times \sum_{m=1}^{\infty}\lambda_{f}(m)m^{-it}\chi(m)e\left(\frac{ma}{qp^{\lambda}}\right)
e\left(\frac{mu}{Qqp^{\lambda}}\right)W\left(\frac{m}{N}\right)\mathrm{d}u.
\end{aligned}
\end{equation}
Without loss of generality,
we only consider the contribution from $u > 0$ and $t>0$ (the contribution from $u<0$ or $t<0$ can be
estimated similarly). By abuse of notation, we still write the contribution from $u > 0$ and $t>0$
as $S^{\flat}(N)$.

\section{Voronoi summation formulas}\label{Voronoi}
In this section, we transform the $m$ and $n$ sums in \eqref{definition of m and n sums} by Voronoi summation formulas.

We first apply the $\rm GL_2$ Voronoi summation formula to the $n$-sum
$$
S_1:= \sum_{n=1}^{\infty}\lambda_{g}(n)e\left(-\frac{na}{qp^{\lambda}}\right)
e\left(-\frac{nu}{Qqp^{\lambda}}\right)V\left(\frac{n}{N}\right).
$$
Applying Lemma \ref{voronoiGL2-Maass} with $\varphi_1(x)= e\left(-\frac{Nux}{Qqp^{\lambda}}\right)V(x)$, we obtain that
\begin{equation}\label{n sum after GL2 Voronoi}
S_1= \frac{N}{qp^{\lambda}}\sum_{\pm}\sum_{n=1}^{\infty}\lambda_g(n)
e\left(\pm\frac{\overline{a}n}{qp^{\lambda}}\right)
\Phi_1^{\pm}\left(\frac{nN}{q^2p^{2\lambda}}\right)
\end{equation}
where if $g$ is holomorphic, $\Phi_{1}^+(y)=\Phi_{1}(y)$ with $\Phi_{1}(y)$
given by \eqref{intgeral transform-1} and $\Phi_{1}^-(y)=0$,
while for $g$ a Hecke--Maass cusp form, $\Phi_{1}^{\pm}(y)$ are given by \eqref{intgeral transform-2}.

\begin{lemma}\label{Phi 1 analysis}
  \begin{enumerate}
  \item For $y= \frac{nN}{q^2 p^{2\lambda}}\gg N^{\varepsilon}$, we have $\Phi_{1}^-(y)\ll N^{-A}$.
  For $y= \frac{nN}{q^2 p^{2\lambda}}\gg N^{\varepsilon}$ and $n\asymp \frac{Nu^2}{Q^2}\asymp \frac{NX^2}{Q^2}$, we have
      $$\Phi_1^{+}(y)= y^{-\frac{1}{2}}V_1\left(y\frac{Q^2q^2p^{2\lambda}}{N^2u^2}\right)e\left(y\frac{Qqp^{\lambda}}{Nu}\right)+ O(N^{-A}).$$

  \item For $y= \frac{nN}{q^2 p^{2\lambda}}\ll N^{\varepsilon}$ and $\frac{NX}{p^{\lambda}RQ}\gg N^{\varepsilon}$, we have $\Phi_1^{\pm}(y)\ll N^{-A}.$
  \item For $y= \frac{nN}{q^2 p^{2\lambda}}\ll N^{\varepsilon}$ and $\frac{NX}{p^{\lambda}RQ}\ll N^{\varepsilon}$, we have  $\Phi_1^{\pm}(y)\ll N^{\varepsilon}.$
  \end{enumerate}
\end{lemma}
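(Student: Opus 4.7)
The approach is to evaluate the integral transforms $\Phi_1^{\pm}(y)$ according to the size of $y$ and of the secondary frequency $Nu/(Qqp^\lambda)\asymp NX/(QRp^\lambda)$. For Part~(1), the bound $\Phi_1^-(y)\ll N^{-A}$ is immediate from the rapid-decay estimate for $\Phi^-$ recorded just before Lemma~\ref{lemma:voronoiGL2-Maass-asymptotic}. For $\Phi_1^+$ in Part~(1), I would substitute $\varphi(w)=V(w)e(-Nuw/(Qqp^\lambda))$ into Lemma~\ref{lemma:voronoiGL2-Maass-asymptotic}; by Remark~\ref{decay-of-largeX} only the $j=0$ terms contribute meaningfully, leaving
\begin{equation*}
\Phi_1^+(y)\;\approx\; y^{-1/4}\!\int_0^\infty V(w)\,w^{-1/4}\bigl[c_0\, e(2\sqrt{yw})+d_0\, e(-2\sqrt{yw})\bigr]\,e\!\left(-\frac{Nuw}{Qqp^\lambda}\right)dw.
\end{equation*}
The $d_0$-branch has phase derivative $-\sqrt{y/w}-Nu/(Qqp^\lambda)$ of fixed sign and modulus $\gg NX/(QRp^\lambda)$, so Lemma~\ref{lem: upper bound} discards it with arbitrary saving.

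For the $c_0$-branch, the phase $h(w)=2\sqrt{yw}-Nuw/(Qqp^\lambda)$ has a unique stationary point
\begin{equation*}
w_0\;=\;\frac{yQ^2q^2p^{2\lambda}}{N^2u^2},\qquad h(w_0)=\frac{yQqp^\lambda}{Nu},\qquad |h''(w_0)|\asymp y^{-1}\!\left(\frac{Nu}{Qqp^\lambda}\right)^{3}.
\end{equation*}
The calibration $n\asymp NX^2/Q^2$ is precisely what forces $w_0\asymp 1$ into $\supp V$, so Lemma~\ref{lemma:exponentialintegral}(2) applies and yields the asserted shape: combining the prefactor $y^{-1/4}$, the amplitude factor $w_0^{-1/4}\asymp y^{-1/4}$, and the stationary-phase weight $|h''(w_0)|^{-1/2}\asymp y^{1/2}$ produces the overall $y^{-1/2}$, while the residual $V(w_0)$ together with the remaining $w_0$-powers package into a $1$-inert function $V_1(z)$ of $z=yQ^2q^2p^{2\lambda}/(N^2u^2)$ supported at $z\asymp 1$.

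For Parts~(2) and (3), the condition $y\ll N^\varepsilon$ keeps $4\pi\sqrt{yw}\ll N^\varepsilon$ uniformly on $\supp V$, so \eqref{J K Bessel function derivative} combined with the small-argument series for $J_{\pm 2it_f}$ and $K_{2it_f}$ (whose slow $w^{\pm it_f}$-type oscillations contribute only bounded derivative factors depending on $t_f$) shows that the Bessel factor in the integrand defining $\Phi_1^\pm(y)$ behaves as an $N^\varepsilon$-inert function of $w$. The only genuine oscillation is then $e(-Nuw/(Qqp^\lambda))$ with frequency $\asymp NX/(QRp^\lambda)$. In Part~(2) this frequency exceeds $N^\varepsilon$, so sufficiently many integrations by parts via Lemma~\ref{lem: upper bound} give $\Phi_1^\pm(y)\ll N^{-A}$; in Part~(3) the frequency is also $\ll N^\varepsilon$, making the entire integrand tame and yielding the trivial bound $\Phi_1^\pm(y)\ll N^\varepsilon$. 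The main technical obstacle is the stationary-phase bookkeeping in Part~(1): one has to verify that the amplitude factors collapse precisely into $y^{-1/2}$ times a bona fide $1$-inert profile in $z$, since any leftover $y$-dependence outside of $V_1$ would complicate the Cauchy--Schwarz and Poisson steps carried out in Section~\ref{CauchyPoisson}.
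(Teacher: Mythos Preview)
Your proposal is correct and follows essentially the same route as the paper's own proof: both discard $\Phi_1^-$ via \eqref{The $-$ case}, feed $\varphi_1$ into Lemma~\ref{lemma:voronoiGL2-Maass-asymptotic} and evaluate the resulting oscillatory integral for $\Phi_1^+$ by stationary phase (Lemma~\ref{lemma:exponentialintegral}(2)) at the same critical point $w_0=yQ^2q^2p^{2\lambda}/(N^2u^2)$, and for small $y$ both invoke \eqref{J K Bessel function derivative} together with integration by parts to separate Parts~(2) and~(3). One bookkeeping slip in your Part~(1) computation: under the calibration $\sqrt{y}\asymp Nu/(Qqp^\lambda)$ you have $w_0\asymp 1$ (so $w_0^{-1/4}\asymp 1$, not $y^{-1/4}$) and $|h''(w_0)|=\tfrac12\sqrt{y}\,w_0^{-3/2}\asymp\sqrt{y}$ (so $|h''(w_0)|^{-1/2}\asymp y^{-1/4}$, not $y^{1/2}$); your two errors happen to cancel and the correct product $y^{-1/4}\cdot 1\cdot y^{-1/4}=y^{-1/2}$ is indeed what you claimed.
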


\begin{proof}
 For $y= \frac{nN}{q^2 p^{2\lambda}}\gg N^{\varepsilon}$, we know that
$\Phi_{1}^-(y)\ll N^{-A}$ by \eqref{The $-$ case}, and
\begin{equation}\label{voronoiGL2-Maass-asymptotic1}
\Phi_1^{+}(y)=y^{-\frac{1}{4}}\int_{0}^{\infty}V(\xi)\xi^{-\frac{1}{4}}e\left(-\frac{Nu}{Qqp^{\lambda}}\xi+2\sigma_1 \sqrt{y\xi}\right)\mathrm{d}\xi + O(N^{-A})
\end{equation}
with $\sigma_1 = \pm 1$.
For $\Phi_1^{+}\left(y\right)$, it is negligibly small by repeated integration by parts unless $\sqrt{y}\asymp \frac{Nu}{Qqp^{\lambda}}$ and $\sgn \sigma_1 = +$, in which case we apply
stationary phase method to the $\xi$-integral in \eqref{voronoiGL2-Maass-asymptotic1}.
Let
$$
h_1(\xi) = -\frac{Nu}{Qqp^{\lambda}}\xi + 2\sqrt{y\xi}.
$$
Then we have
$$
  h_1'(\xi) = -\frac{Nu}{Qqp^{\lambda}} +  \sqrt{y} \xi^{-1/2}, \quad
  h_1''(\xi) =  - \frac{1}{2} \sqrt{y} \xi^{-3/2} \asymp \sqrt{y}, \quad
  h_1^{(j)}(\xi) \ll_j \sqrt{y} , \ j\geq3.
$$
Note that the stationary point is $\xi_{01}= y\frac{Q^2q^2p^{2\lambda}}{N^2u^2}$ and we have
$h_1(\xi_{01})= y\frac{Qqp^{\lambda}}{Nu}$.
Let $w_1(\xi) = V\left(\xi\right) \xi^{-1/4}$. Then we have $w_1^{(j)}(\xi) \ll_j 1$ and $\supp w_1\subset [1/4,4]$.
By Lemma \ref{lemma:exponentialintegral}(2), we obtain that
\begin{equation}\label{Phi 1 after stationary phase}
\Phi_1^{+}(y)= y^{-\frac{1}{2}}V_1\left(y\frac{Q^2q^2p^{2\lambda}}{N^2u^2}\right)e\left(y\frac{Qqp^{\lambda}}{Nu}\right)+ O(N^{-A}),
\end{equation}
where  $V_1$ is an $1$-inert function (depending on $A$)
supported on $\xi_{01} \asymp 1$. Hence, case (1) is proved.

For $y=\frac{nN}{q^2p^{2\lambda}}\ll N^{\varepsilon}$, by \eqref{intgeral transform-2} we may regard $\Phi_1^{\pm}(y)$ as
\begin{equation}\label{Phi 1 y small}
\Phi_1^{\pm}(y)=\int_{0}^{\infty}W(\xi)
e\left(- \frac{Nu\xi}{Qqp^{\lambda}}\right)J_g^{\pm}(y\xi)\mathrm{d}\xi,
\end{equation}
where
$$
J_g^{+}(y) = \frac{-\pi}{\sin(\pi i t_g)} \left(J_{2it_g}(4\pi\sqrt{y})- J_{-2it_g}(4\pi\sqrt{y})\right),
$$
and
$$
J_g^{-}(y) = 4\varepsilon_g\cosh(\pi t_g)K_{2it_g}(4\pi\sqrt{y}).
$$
Then, by partial integration and \eqref{J K Bessel function derivative}, $\Phi_1^{\pm}(y)$ is negligible unless $\frac{Nu}{Qqp^{\lambda}} \ll N^{\varepsilon}$, in which case $\Phi_1^{\pm}(y)\ll N^{\varepsilon}.$
Therefore, case (2) and (3) are proved.
\end{proof}

According to Lemma \ref{Phi 1 analysis}, we first consider the estimations for the case
$\frac{nN}{q^2 p^{2\lambda}}\asymp \big(\frac{NX}{RQp^{\lambda}}\big)^2 \gg N^{\varepsilon}$.
For the complementary range $\frac{nN}{q^2 p^{2\lambda}}\ll N^{\varepsilon}$ and $\frac{NX}{p^{\lambda}RQ}\ll N^{\varepsilon}$, we will analyse the contribution of the case in \S \ref{The complementary cases}.

Plugging \eqref{Phi 1 after stationary phase} into \eqref{n sum after GL2 Voronoi}, we find that
the $n$-sum is asymptotically equal to
$$
 N^{\frac{1}{2}}\sum_{n}\frac{\lambda_g(n)}{n^{1/2}}
e\left(\frac{\overline{a}n}{qp^{\lambda}}\right)
V_1\left(\frac{n Q^2}{Nu^2}\right)e\left(\frac{nQ}{qp^{\lambda}u}\right),
$$
and we have $n\asymp \frac{Nu^2}{Q^2}\asymp \frac{NX^2}{Q^2}$.
Hence
\begin{equation*}
\begin{aligned}
S^{\flat}(N)=
& \frac{N^{\frac{1}{2}}}{Q}\sum_{\substack{q\sim R \\ (q,p)=1}} \frac{1}{qp^{\lambda}} \sideset{}{^\star}\sum_{a\bmod{qp^{\lambda}}}
  \sum_{n\asymp \frac{NX^2}{Q^2}}\frac{\lambda_g(n)}{n^{1/2}}
e\left(\frac{\overline{a}n}{qp^{\lambda}}\right)
  \sum_{m}\lambda_f(m)\chi(m)m^{-it}
 e\left(\frac{am}{qp^{\lambda}}\right)V\left(\frac{m}{N}\right)\\
& \cdot \int_{\mathbb{R}}U\left(\frac{u}{X}\right)g(q,u) V_1\left(\frac{n Q^2}{Nu^2}\right)e\left(\frac{nQ}{qp^{\lambda}u}+\frac{mu}{Qqp^{\lambda}}\right)\mathrm{d}u + O(N^{-A}).
\end{aligned}
\end{equation*}
Making a change of variable $u= X\xi$, we have
\begin{equation}\label{I defination}
I(X)= X \int_{\mathbb{R}}U(\xi)g(q,X\xi) V_1\left(\frac{n Q^2}{NX^2\xi^2}\right)e\left(\frac{nQ}{p^{\lambda}qX\xi}+\frac{mX\xi}{Qqp^{\lambda}}\right)\mathrm{d}\xi.
\end{equation}

Next, we apply stationary phase method to \eqref{I defination}.
Let
$$
h_2(\xi) = \frac{nQ}{p^{\lambda}qX\xi}+\frac{mX\xi}{Qqp^{\lambda}}.
$$
Then we have
$$
  h_2'(\xi) = -\frac{nQ}{p^{\lambda}qX\xi^2} + \frac{mX}{Qqp^{\lambda}}, \quad
  h_2''(\xi) =  \frac{2nQ}{p^{\lambda}qX\xi^3}  \asymp \frac{NX}{p^{\lambda}RQ}.
$$
Let $h_2'(\xi_{02}) = 0$. Then we have the stationary point $\xi_{02}= \frac{n^{1/2}Q}{m^{1/2}X}$ and
$h_2(\xi_{02})= 2\frac{m^{1/2}n^{1/2}}{qp^{\lambda}}$.
By Lemma \ref{lemma:exponentialintegral}(2), we obtain that
\begin{equation}\label{Phi after stationary phase}
I(X)=X \left(\frac{nQ}{p^{\lambda}qX}\right)^{-1/2}V_2\left(\frac{nQ^2}{mX^2}\right)
e\left(2\frac{m^{1/2}n^{1/2}}{qp^{\lambda}}\right)
+ O(N^{-A}),
\end{equation}
where $V_2$ is an 1-inert function (depending on $A$) supported on $\xi_{02} \asymp 1$.
Hence
\begin{equation}\label{Sb(N) before S2}
\begin{aligned}
S^{\flat}(N)=
& \frac{X^{\frac{3}{2}}N^{\frac{1}{2}}}{Q^{\frac{3}{2}}}\sum_{\substack{q\sim R \\ (q,p)=1}} \frac{1}{(qp^{\lambda})^{1/2}} \sideset{}{^\star}\sum_{a\bmod{qp^{\lambda}}}
  \sum_{n\asymp \frac{NX^2}{Q^2}}\frac{\lambda_g(n)}{n}
e\left(\frac{\overline{a}n}{qp^{\lambda}}\right) \\
& \cdot \sum_{m}\lambda_f(m)\chi(m)m^{-it}
 e\left(\frac{am}{qp^{\lambda}}\right)V\left(\frac{m}{N}\right)
 V_2\left(\frac{n Q^2}{mX^2}\right)
e\left(2\frac{m^{1/2}n^{1/2}}{qp^{\lambda}}\right)+ O(N^{-A}).
\end{aligned}
\end{equation}
Let
$$
S_2:= \sum_{m}\lambda_f(m)\chi(m)m^{-it}
 e\left(\frac{am}{qp^{\lambda}}\right)
 e\left(2\frac{m^{1/2}n^{1/2}}{qp^{\lambda}}\right)
 V\left(\frac{m}{N}\right)V_2\left(\frac{n Q^2}{mX^2}\right).
$$
By the Fourier expansion of $\chi$ in terms of additive characters, we have
$$
S_2= \frac{1}{\tau(\overline{\chi})}\sum_{c \mod p^{\kappa}}\overline{\chi}(c)\sum_{m}\lambda_f(m)
e\left(\frac{(ap^{\kappa-\lambda}+cq)m}{qp^{\kappa}}\right)m^{-it}
e\left( 2\frac{m^{1/2}n^{1/2}}{qp^{\lambda}}\right)
V\left(\frac{m}{N}\right)V_2\left(\frac{n Q^2}{mX^2}\right).
$$
Note that $(ap^{\kappa-\lambda}+cq, q)=1$ and $(ap^{\kappa-\lambda}+cq, p)=(cq,p)=1$. Thus $(ap^{\kappa-\lambda}+cq, qp^{\kappa})=1$. Denote $a^{\ast}= ap^{\kappa-\lambda}+cq$.

Next, we apply the $\rm GL_2$ Voronoi summation formula in Lemma \ref{voronoiGL2-Maass} with
$$
\varphi_2(x)=x^{-it}
e\left(2\frac{N^{1/2}n^{1/2}}{qp^{\lambda}}x^{1/2}\right)
V\left(x\right)V_2\left(\frac{n Q^2}{NX^2x}\right)
$$
and obtain that
$$
S_2= \frac{1}{\tau(\overline{\chi})}\frac{N^{1-it}}{qp^{\kappa}}\sum_{c \mod p^{\kappa}}\overline{\chi}(c)\sum_{m}\lambda_f(m)
e\left(\frac{-\overline{a^{\ast}}m}{qp^{\kappa}}\right)
\Phi_2^{\pm}\left(\frac{mN}{q^2p^{2\kappa}}\right),
$$
where $\Phi_{2}^{\pm}(y)$ are given by \eqref{intgeral transform-2}.

\begin{lemma}\label{Phi 2 m small}
  If $y= \frac{mN}{q^2p^{2\kappa}}\ll N^{\varepsilon}$, then $\Phi_2^{\pm}(y)$ is negligible unless $t\asymp \frac{N^{1/2}n^{1/2}}{qp^{\lambda}}\asymp \frac{NX}{p^{\lambda}RQ}$.
\end{lemma}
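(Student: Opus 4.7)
The plan is to run a non-stationary phase argument on the $\xi$-integral representation of $\Phi_2^{\pm}(y)$, parallel to cases (2) and (3) of Lemma \ref{Phi 1 analysis}. Unfolding the definition \eqref{intgeral transform-2} with the weight $\varphi_2$, one writes
\begin{equation*}
\Phi_2^{\pm}(y) = \int_0^{\infty} V(\xi)\, V_2\!\left(\frac{n Q^2}{N X^2 \xi}\right) \xi^{-it}\, e\!\left(\frac{2 N^{1/2} n^{1/2}}{q p^{\lambda}} \xi^{1/2}\right) J_f^{\pm}(y\xi) \, \mathrm{d}\xi,
\end{equation*}
where $J_f^{\pm}$ denote the Bessel combinations attached to $f$ as in \eqref{Phi 1 y small}. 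The cutoffs $V(\xi) V_2(\cdot)$ localize to $\xi \asymp 1$, and since $y\xi \ll N^{\varepsilon}$, the bounds \eqref{J K Bessel function derivative} show that $J_f^{\pm}(y\xi)$ together with each of its $\xi$-derivatives is $\ll N^{\varepsilon}$. Consequently the entire non-oscillatory amplitude is $N^{\varepsilon}$-inert in $\xi$ and supported on $\xi \asymp 1$.

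I then isolate the genuine phase
\begin{equation*}
h(\xi) = -\frac{t \log \xi}{2\pi} + \frac{2 N^{1/2} n^{1/2}}{q p^{\lambda}} \xi^{1/2}, \qquad h'(\xi) = -\frac{t}{2\pi \xi} + \frac{N^{1/2} n^{1/2}}{q p^{\lambda} \xi^{1/2}}.
\end{equation*}
On $\xi \asymp 1$, the two summands of $h'$ are of respective sizes $t$ and $N^{1/2} n^{1/2}/(q p^{\lambda})$. If these are not comparable, say their ratio exceeds a fixed power of $N^{\varepsilon}$, then $|h'(\xi)|$ is bounded below by $N^{-\varepsilon}$ times the larger of the two, while higher derivatives satisfy $h^{(j)}(\xi) \ll t + N^{1/2} n^{1/2}/(q p^{\lambda})$ for $j \geq 2$. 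Feeding these bounds into Lemma \ref{lem: upper bound} with $Y = U = 1$ (or equivalently applying repeated integration by parts) produces $\Phi_2^{\pm}(y) \ll N^{-A}$ for any $A > 0$.

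Hence $\Phi_2^{\pm}(y)$ is negligibly small unless $t \asymp N^{1/2} n^{1/2}/(q p^{\lambda})$. Since $q \sim R$ and the $n$-range has been reduced to $n \asymp N X^2/Q^2$ by the support of $V_1$ coming from the previous stationary-phase step in Lemma \ref{Phi 1 analysis}, this is exactly $t \asymp NX/(p^{\lambda} R Q)$, as claimed. The one mildly delicate point is tracking the $\log$-losses from differentiating $J_f^{\pm}(y\xi)$ in $\xi$; these are absorbed into the $N^{\varepsilon}$ slack implicit in the word ``negligible'', so the argument goes through unimpeded.
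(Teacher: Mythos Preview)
Your proposal is correct and follows essentially the same approach as the paper: write $\Phi_2^{\pm}(y)$ as an integral over $\xi$ with the Bessel kernel $J_f^{\pm}(y\xi)$ absorbed into an $N^{\varepsilon}$-inert amplitude via \eqref{J K Bessel function derivative}, then apply repeated integration by parts to the remaining phase to force $t\asymp N^{1/2}n^{1/2}/(qp^{\lambda})$. The paper's proof is terser but identical in substance.
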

\begin{proof}
For $\frac{mN}{q^2p^{2\kappa}}\ll N^{\varepsilon}$, by \eqref{intgeral transform-2} we may regard $\Phi_2^{\pm}(y)$ as
\begin{equation}\label{psi2 case b}
\Phi_2^{\pm}(y)=\int_{0}^{\infty}V(\xi)V_2\left(\frac{n Q^2}{NX^2\xi}\right)
e\left(-\frac{t\log \xi}{2\pi}+ 2\frac{N^{1/2}n^{1/2}}{qp^{\lambda}}\xi^{1/2}\right)J_f^{\pm}(y\xi)\mathrm{d}\xi,
\end{equation}
where
$$
J_f^{+}(y) = \frac{-\pi}{\sin(\pi i t_f)} \left(J_{2it_f}(4\pi\sqrt{y})- J_{-2it_f}(4\pi\sqrt{y})\right),
$$
and
$$
J_f^{-}(y) = 4\varepsilon_f\cosh(\pi t_f)K_{2it_f}(4\pi\sqrt{y}).
$$
Then, by partial integration together with \eqref{J K Bessel function derivative}, $\Phi_2^{\pm}(y)$ is negligible unless $t\asymp \frac{N^{1/2}n^{1/2}}{qp^{\lambda}}\asymp \frac{NX}{p^{\lambda}RQ}$.
\end{proof}

We will analyse the contribution of the case $\frac{mN}{q^2p^{2\kappa}}\ll N^{\varepsilon}$ to $S(N)$ in \S \ref{The complementary cases}.
In this section we consider the estimations for the case $\frac{mN}{q^2 p^{2\kappa}}\gg N^{\varepsilon}$.

For $y= \frac{mN}{q^2 p^{2\kappa}}\gg N^{\varepsilon}$, in view of \eqref{The $-$ case}, $\Phi_2^{-}(y)$ is negligible. By Lemma \ref{lemma:voronoiGL2-Maass-asymptotic}, we have
\begin{equation}\label{M voronoi asymptotic}
\Phi_2^{+}(y)=y^{-\frac{1}{4}}\int_{0}^{\infty}V(\xi)V_2\left(\frac{n Q^2}{NX^2\xi}\right)\xi^{-\frac{1}{4}}
e\left(-\frac{t\log \xi}{2\pi}+ 2\frac{N^{1/2}n^{1/2}}{qp^{\lambda}}\xi^{1/2}
+ 2\sigma_2 \sqrt{y\xi}\right)\mathrm{d}\xi + O(N^{-A})
\end{equation}
with $\sigma_2=\pm 1$.
Making a change of variable $\xi^{1/2}\rightarrow \xi$ and taking $y= \frac{mN}{q^2 p^{2\kappa}}$, we obtain
$$
\begin{aligned}
\Phi_2^{+}\left(\frac{mN}{q^2 p^{2\kappa}}\right)=2\left(\frac{mN}{q^2 p^{2\kappa}}\right)^{-\frac{1}{4}}\int_{0}^{\infty}
&V_3(\xi) e\left(-\frac{t\log \xi}{\pi}+ 2\frac{N^{1/2}n^{1/2}}{qp^{\lambda}}\xi
+ 2\sigma_2 \frac{\sqrt{mN}}{q p^{\kappa}}\xi\right)\mathrm{d}\xi + O(N^{-A}),
\end{aligned}
$$
where $V_3(\xi)= V(\xi^2)V_2\left(\frac{n Q^2}{NX^2\xi^2}\right)\xi^{\frac{1}{2}}$ is an 1-inert function.

By repeated integration by parts, one can truncate the $m$-sum at
$m\ll \max\{\frac{t^2R^2p^{2\kappa}}{N}, \frac{NX^2p^{2\kappa-2\lambda}}{Q^2}\}.$
Hence, it is sufficient to estimate
\begin{equation}\label{Sb(N) before Cauchy 1}
\begin{aligned}
& \frac{N^{5/4-it}X^{3/2}}{Q^{3/2}\tau(\overline{\chi})p^{(\kappa+\lambda)/2}}
\sum_{\substack{q\sim R \\ (q,p)=1}} \frac{1}{q}  \sum_{n\asymp\frac{NX^2}{Q^2}}\frac{\lambda_g(n)}{n}
\sum_{m\ll \max\{\frac{t^2R^2p^{2\kappa}}{N}, \frac{NX^2p^{2\kappa-2\lambda}}{Q^2}\}}\frac{\lambda_f(m)}{m^{1/4}} \sideset{}{^\star}\sum_{a\bmod{qp^{\lambda}}}\left(\frac{\overline{a}n}{qp^{\lambda}}\right)\\
&\cdot\sum_{c \mod p^{\kappa}}\overline{\chi}(c)e\left(\frac{-\overline{a^{\ast}}m}{qp^{\kappa}}\right)
\int_{0}^{\infty}V_3(\xi)e\left(-\frac{t\log \xi}{\pi}+ 2\frac{N^{1/2}n^{1/2}}{qp^{\lambda}}\xi
+2\sigma_2\frac{N^{1/2}m^{1/2}}{qp^{\kappa}}\xi\right)\mathrm{d}\xi.
\end{aligned}
\end{equation}
Note that
$$
e\left(\frac{-\overline{a^{\ast}}m}{qp^{\kappa}}\right)=
e\left(\frac{-(\overline{ap^{\kappa-\lambda}+cq})m}{qp^{\kappa}}\right)
=e\left(\frac{-(\overline{ap^{\kappa-\lambda}+cq})m\bar{q}}{p^{\kappa}}\right)
e\left(\frac{-\overline{ap^{2\kappa-\lambda}}m}{q}\right).
$$
The character sum in \eqref{Sb(N) before Cauchy 1} is
\begin{equation}\label{character sum}
\begin{aligned}
&\sideset{}{^\star}\sum_{a\bmod{qp^{\lambda}}}\left(\frac{\overline{a}n}{qp^{\lambda}}\right)
\sum_{c \mod p^{\kappa}}\overline{\chi}(c)e\left(\frac{-\overline{a^{\ast}}m}{qp^{\kappa}}\right)\\
= &\sum_{c \mod p^{\kappa}}\overline{\chi}(c)\sideset{}{^\star}\sum_{b\bmod{p^{\lambda}}}
e\left(\frac{-(\overline{ap^{\kappa-\lambda}+cq})m\bar{q}}{p^{\kappa}}+ \frac{n\bar{q}\bar{b}}{p^{\lambda}}\right)
\sum_{\substack{d|q \\ n\equiv mp^{\lambda}\overline{p^{2\kappa-\lambda}}\bmod d}}d\mu\left(\frac{q}{d}\right).
\end{aligned}
\end{equation}
Plugging \eqref{character sum} into \eqref{Sb(N) before Cauchy 1}, we are led to estimate
\begin{equation}\label{Sb(N) before Cauchy 2}
\begin{aligned}
\frac{N^{5/4-it}X^{3/2}}{Q^{3/2}\tau(\overline{\chi})p^{(\kappa+\lambda)/2}}
\sum_{\substack{q\sim R \\ (q,p)=1}} \frac{1}{q}
&\sum_{d|q}d\mu\left(\frac{q}{d}\right)
\sum_{n\asymp\frac{NX^2}{Q^2}}\frac{\lambda_g(n)}{n} \\
&\times \sum_{\substack{m\ll \max\{\frac{t^2R^2p^{2\kappa}}{N}, \frac{NX^2p^{2\kappa-2\lambda}}{Q^2}\} \\ n\equiv mp^{\lambda}\overline{p^{2\kappa-\lambda}}\bmod d}}\frac{\lambda_f(m)}{m^{1/4}} \mathfrak{C}^{\ast}(m,n,q)I(m,n,q)
\end{aligned}
\end{equation}
with
$$
\mathfrak{C}^{\ast}(m,n,q):= \sum_{c \mod p^{\kappa}}\overline{\chi}(c)
\sideset{}{^\star}\sum_{b\bmod{p^{\lambda}}}
e\left(\frac{-(\overline{ap^{\kappa-\lambda}+cq})m\bar{q}}{p^{\kappa}}+ \frac{n\bar{q}\bar{b}}{p^{\lambda}}\right)
$$
and
$$
I(m,n,q):= \int_{0}^{\infty}V_3(\xi)e\left(-\frac{t\log \xi}{\pi}+ 2\frac{N^{1/2}n^{1/2}}{qp^{\lambda}}\xi
+2\sigma_2\frac{N^{1/2}m^{1/2}}{qp^{\kappa}}\xi\right)\mathrm{d}\xi.
$$

\section{Applying Cauchy--Schwarz inequality and Poisson summation formula}\label{CauchyPoisson}
Applying the Cauchy--Schwarz inequality to the $m$-sum in \eqref{Sb(N) before Cauchy 2} and using the
Rankin–-Selberg estimate in \eqref{GL2 Rankin Selberg}, we get
\begin{equation}\label{Sb(N) after CAUCHY}
\begin{aligned}
  S^{\flat}(N)&\ll \sup_{M_1\ll \max\{\frac{t^2R^2p^{2\kappa}}{N}, \frac{NX^2p^{2\kappa-2\lambda}}{Q^2}\}}\frac{N^{5/4}X^{3/2}}{Q^{3/2}p^{(2\kappa+\lambda)/2}}
\sum_{\substack{q\sim R \\ (q,p)=1}} \frac{1}{q} \sum_{d|q}d \\
& \quad \quad \quad \quad \cdot \sum_{m\sim M_1 }\frac{\mid\lambda_f(m)\mid}{m^{1/4}}\bigg|\sum_{\substack{n\asymp\frac{NX^2}{Q^2} \\ n\equiv mp^{\lambda}\overline{p^{2\kappa-\lambda}}\bmod d}}\frac{\lambda_g(n)}{n} \mathfrak{C}^{\ast}(m,n,q)I(m,n,q)\bigg|\\
  &\ll\sup_{M_1\ll \max\{\frac{t^2R^2p^{2\kappa}}{N}, \frac{NX^2p^{2\kappa-2\lambda}}{Q^2}\}}
  \frac{N^{5/4}X^{3/2}M_1^{1/4}}{Q^{3/2}p^{(2\kappa+\lambda)/2}}\sum_{\substack{q\sim R\\ (q,p)=1}} \frac{1}{q}\sum_{d|q}d \cdot \Sigma(d,q)^{1/2},
\end{aligned}
\end{equation}
where
$$
  \Sigma(d,q) :=
  \sum_{m}
W_1\left(\frac{m}{M_1}\right)
   \bigg| \sum_{\substack{n\asymp\frac{NX^2}{Q^2} \\ n\equiv mp^{\lambda}\overline{p^{2\kappa-\lambda}}\bmod d}}\frac{\lambda_g(n)}{n} \mathfrak{C}^{\ast}(m,n,q)I(m,n,q) \bigg|^2
$$
with $W_1$ supported on $[1, 2]$ and satisfying $W_1^{(j)}(x) \ll_j 1$.
Expanding the absolute square and switching the order of summations, we obtain
\begin{multline*}
  \Sigma(d,q) =
  \sum_{n_1\asymp \frac{NX^2}{Q^2} } \frac{\lambda_g(n_1)}{n_1}
  \sum_{\substack{n_2\asymp \frac{NX^2}{Q^2} \\ n_2\equiv n_1 \bmod d}}
   \frac{\lambda_g(n_2)}{n_2}
  \sum_{\substack{m \geq1 \\ m\equiv n_1\overline{p^{\lambda}}p^{2\kappa-\lambda} \bmod d}}
  W_1\left(\frac{m}{M_1}\right) \\ \cdot\mathfrak{C}^{\ast}(m,n_1,q)\overline{\mathfrak{C}^{\ast}(m,n_2,q)}
  I(m,n_1,q)\overline{I(m,n_2,q)} .
\end{multline*}
Note that $\lambda_g(n_1) \lambda_g(n_2) \ll |\lambda_g(n_1)|^2 + |\lambda_g(n_2)|^2$. Hence we have
\begin{equation}\label{Sigma(d,q)}
 \Sigma(d,q) \ll  \frac{Q^4}{N^2 X^4}
  \sum_{n_1\asymp \frac{NX^2}{Q^2} } |\lambda_g(n_1)|^2
  \sum_{\substack{n_2\asymp \frac{NX^2}{Q^2} \\ n_2\equiv n_1 \bmod d}}| \mathcal{K} |,
\end{equation}
where
$$
\mathcal{K}=\sum_{\substack{m \geq1 \\ m\equiv n_1\overline{p^{\lambda}}p^{2\kappa-\lambda} \bmod d}}
  W_1\left(\frac{m}{M_1}\right) \mathfrak{C}^{\ast}(m,n_1,q)\overline{\mathfrak{C}^{\ast}(m,n_2,q)}
  I(m,n_1,q)\overline{I(m,n_2,q)}.
$$
Applying the Poisson summation formula with modulo $dp^{\kappa}$, we get
\begin{equation}\label{K after Poisson}
\mathcal{K} = \frac{M_1}{dp^{\kappa}} \sum_{m\in\mathbb{Z}} \mathfrak{C}(m,n_1,n_2)\cdot \mathfrak{I}(m,n_1,n_2),
\end{equation}
where
\begin{equation}\label{C(m,n_1,n_2)}
  \mathfrak{C}(m,n_1,n_2) = \sum_{ \gamma \bmod p^{\kappa}}\mathfrak{C}^{\ast}(\gamma,n_1,q)\overline{\mathfrak{C}^{\ast}(\gamma,n_2,q)}
  e\left(\frac{m\left(n_1p^{3\kappa-\lambda}\overline{p^{\kappa+\lambda}}+\gamma d\bar{d}\right)}{dp^{\kappa}}\right)
\end{equation}
and
\begin{equation}\label{eqn:I(n)definition}
  \mathfrak{I}(m,n_1,n_2) := \int_{\mathbb{R}}   W_1\left( z\right)
  I(M_1z,n_1,q)\overline{I(M_1z,n_2,q)}e\left(-\frac{mM_1}{p^{\kappa}d} z \right)  \dd z
\end{equation}
with
\begin{equation}\label{I(M_1z,n,q)definition}
I(M_1z,n,q)= \int_{0}^{\infty}V_3(\xi)e\left(-\frac{t\log \xi}{\pi}+ 2\frac{N^{1/2}n^{1/2}}{qp^{\lambda}}\xi
+2\sigma_2\frac{N^{1/2}M_1^{1/2}z^{1/2}}{qp^{\kappa}}\xi\right)\mathrm{d}\xi.
\end{equation}
For the character sum, Sun \cite[Lemma 4]{Sun} has the following estimation:

\begin{lemma}\label{character sum estimate}
Assume $(2\kappa+1)/3\leq \lambda\leq 3\kappa/4$. The character sum vanishes unless $(n_1n_2,p)=1$ and
$p^{\kappa-\lambda}|m$.
Let $m=m'p^{\kappa-\lambda}$, $\lambda=2\alpha+\delta_1$ and
$\kappa=2\beta+\delta_2$ with $\delta_1,\delta_2=0$ or 1, $\alpha\geq 1$ and $\beta\geq 1$.

(1) If $p^{\beta-\kappa+\lambda}|m'$, then $n_1\equiv n_2 (\bmod \,p^{\beta-\kappa+\lambda})$ and
\begin{equation*}
 \mathfrak{C}(m,n_1,n_2) \ll p^{2\kappa+\lambda+\delta_1}.
\end{equation*}

(2) If $p^{\ell}\| m'$ with $\ell<\beta-\kappa+\lambda$, then $p^{\ell}\| n_1-n_2$ and
\begin{equation*}
\mathfrak{C}(m,n_1,n_2) \ll p^{5\kappa/2+2\ell +\delta_1+\delta_2/2}.
\end{equation*}

(3) For $m=0$, we have
\begin{equation*}
\mathfrak{C}(m,n_1,n_2)=p^{2\kappa}\sum_{d|(n_1-n_2,p^{\lambda})}d\mu\left(p^{\lambda}/d\right).
\end{equation*}

\end{lemma}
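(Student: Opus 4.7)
The plan is to follow \cite[Lemma 4]{Sun} closely, since the character sum here has essentially the same shape. First I would use the Chinese Remainder Theorem to split the modulus $dp^\kappa$ into its coprime parts. Since $(d,p)=1$, the contribution from the $d$-component is only an outer phase producing no further cancellation; all the analytic content is concentrated in the $p^\kappa$-aspect, and the problem reduces to estimating
$$
\mathfrak{D}(m,n_1,n_2) := \sum_{\gamma \bmod p^\kappa} \mathfrak{C}^{\ast}(\gamma,n_1,q)\overline{\mathfrak{C}^{\ast}(\gamma,n_2,q)}\, e\!\left(\frac{m \bar d\, \gamma}{p^\kappa}\right).
$$

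Next, opening the definition of each $\mathfrak{C}^{\ast}$ and using orthogonality of additive characters modulo $p^\kappa$, the $\gamma$-summation couples the two inner $c$-variables via a congruence shifted by $m$. The remaining sum is a pure character sum against $\chi\,\overline\chi$ together with an inner sum over $b_1,b_2 \bmod p^\lambda$ in which $n_1, n_2$ appear linearly. The key analytic step is to linearise $\chi$ via the Postnikov ($p$-adic logarithm) formula, writing $\chi(1 + p^{\kappa-\lambda} u)$ as an additive character in $u$ plus lower order corrections; this is legitimate because $p$ is odd, and the hypothesis $(2\kappa+1)/3 \leq \lambda \leq 3\kappa/4$ guarantees $2(\kappa-\lambda)\leq \kappa$, so only the linear and quadratic terms of the logarithm contribute. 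The character sum then becomes a pure exponential sum of Gauss-sum type to which the $p$-adic stationary phase method applies.

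The stationary phase analysis, carried out as in \cite{Sun}, produces the two constraints in the statement: the sum vanishes unless $p^{\kappa-\lambda}\mid m$, and writing $m = m' p^{\kappa-\lambda}$ with $\ell = v_p(m')$, one compares $\ell$ against the threshold $\beta-\kappa+\lambda$. In case (1), $\ell \geq \beta-\kappa+\lambda$, the critical point equation forces $n_1 \equiv n_2 \pmod{p^{\beta-\kappa+\lambda}}$ and the Jacobian contribution yields the bound $p^{2\kappa+\lambda+\delta_1}$. In case (2), $\ell < \beta-\kappa+\lambda$, the same equation forces $p^\ell \| n_1 - n_2$ exactly, and the Jacobian gives the larger bound $p^{5\kappa/2 + 2\ell + \delta_1 + \delta_2/2}$. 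The main obstacle is the precise bookkeeping of $p$-adic valuations at every step of the stationary phase argument; this is exactly what is carried out in \cite[Lemma 4]{Sun}.

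For the case $m=0$, no linearisation is needed: the phase $e(m\gamma/(dp^\kappa))$ disappears and the $\gamma$-summation collapses via orthogonality modulo $p^\kappa$, forcing the two $c$-variables to coincide. The residual sum becomes a Ramanujan-type expression over the $b$-variables modulo $p^\lambda$ subject to the compatibility $n_1 b_2 \equiv n_2 b_1 \pmod{p^\lambda}$, and a direct evaluation yields the explicit formula stated in (3).
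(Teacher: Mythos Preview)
Your proposal is correct and aligns with the paper's treatment: the paper does not give its own proof of this lemma but simply invokes \cite[Lemma~4]{Sun}, and the approach you outline (CRT to isolate the $p$-part, Postnikov linearisation of $\chi$ using $2(\kappa-\lambda)\le\kappa$, then $p$-adic stationary phase with the valuation bookkeeping distinguishing cases (1) and (2), and direct orthogonality for $m=0$) is precisely Sun's argument. One small correction to your sketch of case~(3): the $\gamma$-orthogonality does not force $c_1=c_2$ outright but rather the congruence $b_1p^{\kappa-\lambda}+c_1q\equiv b_2p^{\kappa-\lambda}+c_2q\pmod{p^\kappa}$, after which the $\chi\overline{\chi}$ sum and the remaining $b$-sum produce the Ramanujan-type expression $p^{2\kappa}\sum_{d\mid(n_1-n_2,p^\lambda)}d\,\mu(p^\lambda/d)$.
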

Next, we deal with $\mathfrak{I}(m,n_1,n_2)$.

\subsection{The large modulo case}
Assume $N^{\varepsilon}\ll\frac{NX}{p^{\lambda}RQ}\ll t^{1-\varepsilon}$. Then we have
$$
\frac{N^{1/2}n^{1/2}}{qp^{\lambda}}\asymp \frac{NX}{p^{\lambda}RQ}\ll t^{1-\varepsilon}.
$$
We first consider $I(M_1z,n,q)$.
For $I(M_1z,n,q)$, it is negligibly small by repeated integration by parts unless $t \asymp \frac{N^{1/2}M_1^{1/2}z^{1/2}}{qp^{\kappa}}$ (i.e. $M_1\asymp \frac{t^2R^2p^{2\kappa}}{N}$) and $\sgn \sigma_2 = +$, in which case we apply the stationary phase method to the $\xi$-integral.
Let $h_3(\xi)= -\frac{t\log \xi}{\pi}+ 2\frac{N^{1/2}n^{1/2}}{qp^{\lambda}}\xi
+2\frac{N^{1/2}M_1^{1/2}z^{1/2}}{qp^{\kappa}}\xi$.
Then we have
$$
  h_3'(\xi) = -\frac{t}{\pi \xi}+ 2\frac{N^{1/2}n^{1/2}}{qp^{\lambda}}
+ 2\frac{N^{1/2}M_1^{1/2}z^{1/2}}{qp^{\kappa}}, \quad
  h_3''(\xi) =  \frac{t}{\pi \xi^2}  \asymp t.
$$
The stationary point is
$$\xi_{03}= \frac{t}{2\pi (\frac{N^{1/2}M_1^{1/2}z^{1/2}}{qp^{\kappa}}+ \frac{N^{1/2}n^{1/2}}{qp^{\lambda}})},$$
and
$$
h_3(\xi_{03})= \frac{t}{\pi}\log\frac{2 e\pi(\frac{N^{1/2}M_1^{1/2}z^{1/2}}{qp^{\kappa}}+ \frac{N^{1/2}n^{1/2}}{qp^{\lambda}})}{t}.
$$
By Lemma \ref{lemma:exponentialintegral}(2), we have
\begin{equation}\label{I(M_1z,n,q) after stationary phase}
I(M_1z,n,q)=t^{-\frac{1}{2}}
V_4\left(\frac{\frac{N^{1/2}M_1^{1/2}z^{1/2}}{qp^{\kappa}}+ \frac{N^{1/2}n^{1/2}}{qp^{\lambda}}}{t/2\pi}\right)
e\left(\frac{t}{\pi}\log\frac{2 e\pi(\frac{N^{1/2}M_1^{1/2}z^{1/2}}{qp^{\kappa}}+  \frac{N^{1/2}n^{1/2}}{qp^{\lambda}})}{t}\right),
\end{equation}
where $V_4$ is an 1-inert function supported on $\xi_{03}\asymp 1$.
The oscillation in the above exponential function is of size $t$, which is quite large. Next, we will reduce the oscillation. This is one of the key observations in our paper. Let
\begin{equation}\label{h(M_1z,n,q)defination}
\begin{aligned}
  h(M_1z,n,q):=&\frac{t}{\pi}\log\frac{2 e\pi(\frac{N^{1/2}M_1^{1/2}z^{1/2}}{qp^{\kappa}}+\frac{N^{1/2}n^{1/2}}{qp^{\lambda}})}{t}\\
  =&\frac{t}{\pi}\log\frac{2e\pi}{t}+\frac{t}{\pi}\log\frac{N^{1/2}M_1^{1/2}z^{1/2}}{qp^{\kappa}}
  +\frac{t}{\pi} \log \left( 1+ \frac{p^{\kappa-\lambda}n^{1/2}}{(M_1z)^{1/2}}\right).
\end{aligned}
\end{equation}
Since
$$
  p^{2\kappa-2\lambda}n\asymp  p^{2\kappa-2\lambda}\frac{NX^2}{Q^2}  \ll p^{2\kappa-2\lambda}\frac{R^2p^{2\lambda}t^2}{N} t^{-2\varepsilon}
  \asymp  M_1 t^{-2\varepsilon},
$$
by the Taylor expansion we get
\begin{equation}\label{Taylor expansion}
  \log \left( 1+ \frac{p^{\kappa-\lambda}n^{1/2}}{(M_1z)^{1/2}}\right)
  =  \frac{p^{\kappa-\lambda}n^{1/2}}{(M_1z)^{1/2}} + \frac{p^{2\kappa-2\lambda}n}{2M_1z} + \sum_{j\geq3} c_j \left(  \frac{p^{\kappa-\lambda}n^{1/2}}{(M_1z)^{1/2}}\right)^j.
\end{equation}
By \eqref{h(M_1z,n,q)defination} and \eqref{Taylor expansion}, we have
\begin{equation}\label{h(M_1z,n,q)}
  h(M_1z,n,q)=h_1(M_1z,q) + h_2(M_1z,n,q),
\end{equation}
where
$$
  h_1(M_1z,q) =\frac{t}{\pi}\log\frac{2e\pi}{t}+\frac{t}{\pi}\log\frac{N^{1/2}M_1^{1/2}z^{1/2}}{qp^{\kappa}}
$$
and
$$
  h_2(M_1z,n,q)=
    \frac{t}{\pi}\left(\frac{p^{\kappa-\lambda}n^{1/2}}{(M_1z)^{1/2}} + \frac{p^{2\kappa-2\lambda}n}{2M_1z} + \sum_{j\geq3} c_j \left(  \frac{p^{\kappa-\lambda}n^{1/2}}{(M_1z)^{1/2}}\right)^j\right).
$$
Plugging \eqref{I(M_1z,n,q) after stationary phase} and  \eqref{h(M_1z,n,q)} into \eqref{eqn:I(n)definition}, we get
\begin{equation}
  \mathfrak{I}(m,n_1,n_2) = t^{-1}\int_{\mathbb{R}}   W_2( z)
  e\left(h_2\Big(M_1z,n_1,q\Big)
  - h_2\Big(M_1z,n_2,q\Big)
  - \frac{mM_1}{p^{\kappa}d} z \right) \dd z,
\end{equation}
where $W_2$ is an 1-inert function.

Then we deal with $z$-integral.
\begin{lemma}\label{lemma:I}
  We have
 \begin{enumerate}
    \item  If $m\gg \frac{N^2 X d}{R^3 Q t^2p^{\kappa+\lambda}} N^\varepsilon$, then $\mathfrak{I}(m,n_1,n_2) \ll N^{-A}$.
    \item  If $\frac{dN}{R^2p^{\kappa}t^2} N^\varepsilon \ll m \ll \frac{N^2 X d}{R^3 Q t^2p^{\kappa+\lambda}} N^\varepsilon$, then
        $\mathfrak{I}(m,n_1,n_2) \ll N^{-A}$ unless
        $|n_1-n_2| \frac{Q}{p^{\lambda}RX} \asymp  \frac{|m|R^2p^{\kappa}t^2}{Nd}$
        in which case we have
        $$
          \mathfrak{I}(m,n_1,n_2) \ll \left(\frac{dN}{|m|R^2p^{\kappa}t^4}\right)^{1/2}.
        $$
    \item  If $m \ll \frac{dN}{R^2p^{\kappa}t^2} N^\varepsilon $, then
        $\mathfrak{I}(m,n_1,n_2) \ll N^{-A}$ unless
        $$
          |n_1-n_2| \ll \frac{p^{\lambda}RX}{Q} N^\varepsilon,
        $$
        in which case we have $\mathfrak{I}(m,n_1,n_2) \ll t^{-1}$.
  \end{enumerate}
\end{lemma}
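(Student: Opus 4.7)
The plan is to treat $\mathfrak{I}(m,n_1,n_2)$ as an oscillatory integral in $z$ with phase
\[ \Phi(z) := h_2(M_1z,n_1,q) - h_2(M_1z,n_2,q) - \frac{mM_1}{p^{\kappa}d}z \]
on $z \asymp 1$, and to apply the upper-bound and stationary-phase machinery of Lemmas \ref{lem: upper bound} and \ref{lemma:exponentialintegral}. The first step is to differentiate $h_2(M_1z,n,q)$ in $z$ term by term: the leading Taylor piece $\frac{t}{\pi}p^{\kappa-\lambda}n^{1/2}(M_1z)^{-1/2}$ contributes $-\frac{tp^{\kappa-\lambda}(n_1^{1/2}-n_2^{1/2})}{2\pi M_1^{1/2}z^{3/2}}$ to the $n$-dependent part of $\Phi'(z)$, while the next Taylor piece $\frac{t}{\pi}p^{2\kappa-2\lambda}n/(2M_1z)$ and the higher terms in \eqref{Taylor expansion} are each smaller by at least a factor $M_1^{1/2}/(p^{\kappa-\lambda}n^{1/2}) \asymp tRp^{\lambda}Q/(NX)$, which is $\gg N^{\varepsilon}$ thanks to the standing hypothesis $NX/(p^{\lambda}RQ) \ll t^{1-\varepsilon}$. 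Using the identity $|n_1^{1/2}-n_2^{1/2}| \asymp |n_1-n_2|Q/(N^{1/2}X)$ together with $M_1 \asymp t^2R^2p^{2\kappa}/N$, the $n$-part of $|\Phi'(z)|$ on $z\asymp 1$ has magnitude
\[ T_n := \frac{|n_1-n_2|Q}{p^{\lambda}RX}, \]
while the linear term contributes $T_m := |m|t^2R^2p^{\kappa}/(Nd)$; the same Taylor analysis combined with $\frac{d}{dz}z^{-3/2} \asymp 1$ gives $|\Phi''(z)| \asymp T_n$ throughout the integration range.

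The conclusion then splits according to which of $T_m, T_n$ dominates. For part (1), the hypothesis $|m| \gg N^2Xd/(R^3Qt^2p^{\kappa+\lambda})N^{\varepsilon}$ rewrites as $T_m \gg T_n^{\max} N^{\varepsilon}$, where $T_n^{\max} \asymp NX/(p^{\lambda}RQ)$ is the largest possible value of $T_n$ under $|n_1-n_2| \ll NX^2/Q^2$; hence $|\Phi'(z)| \gg T_m$ uniformly on $z\asymp 1$, and repeated integration by parts via Lemma \ref{lem: upper bound} yields $\mathfrak{I}(m,n_1,n_2) \ll N^{-A}$. For part (3), $|m| \ll dN/(R^2p^{\kappa}t^2)N^{\varepsilon}$ says $T_m \ll N^{\varepsilon}$; if simultaneously $|n_1-n_2| \gg p^{\lambda}RX/Q \cdot N^{\varepsilon}$ then $T_n \gg N^{\varepsilon}$ dominates $\Phi'$ and integration by parts again gives negligibility, so only the complementary range survives, and there $\mathfrak{I}(m,n_1,n_2) \ll t^{-1}$ follows by bounding the integrand absolutely.

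Part (2) is the substantive case. Outside the balanced range $T_m \asymp T_n$ (within a factor $N^{\varepsilon}$) the same large-$|\Phi'|$ argument still applies, so the regime that remains to be treated is exactly the side condition $|n_1-n_2|Q/(p^{\lambda}RX) \asymp |m|R^2p^{\kappa}t^2/(Nd)$ in the statement. In this balanced regime $\Phi'$ has a unique stationary point $z_0 \asymp 1$, and $|\Phi''(z_0)| \asymp T_n \asymp T_m$, so Lemma \ref{lemma:exponentialintegral}(2) gives
\[ \mathfrak{I}(m,n_1,n_2) \ll t^{-1}|\Phi''(z_0)|^{-1/2} \asymp t^{-1}T_m^{-1/2} = \left(\frac{dN}{|m|R^2p^{\kappa}t^4}\right)^{1/2}, \]
which is exactly the asserted bound. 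The main obstacle is the careful bookkeeping required to verify that the higher-order Taylor terms in \eqref{Taylor expansion} really are negligible in both $\Phi'$ and $\Phi''(z_0)$, so that the dominance of the leading term propagates safely through the stationary-phase step; this is precisely where the hypothesis $NX/(p^{\lambda}RQ) \ll t^{1-\varepsilon}$ is indispensable, and once it is pinned down the remaining analysis is the standard dichotomy between non-stationary and stationary phase.
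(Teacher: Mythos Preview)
Your setup is the same as the paper's: define the phase $\rho(z)=\Phi(z)$, show via the Taylor expansion and the hypothesis $NX/(p^{\lambda}RQ)\ll t^{1-\varepsilon}$ that the leading term dominates, and compare the two scales $T_n=|n_1-n_2|Q/(p^{\lambda}RX)$ and $T_m=|m|R^2p^{\kappa}t^2/(Nd)$. Parts (1) and (3) are handled identically in both arguments by repeated integration by parts.

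In part (2) there is one difference and one small slip. The paper does \emph{not} invoke Lemma~\ref{lemma:exponentialintegral}(2); instead, having shown $|\rho''(z)|\asymp T_n\asymp T_m$ on the whole support, it applies the second-derivative test (Lemma~\ref{lem: rth derivative test, dim 1} with $r=2$) to get $\mathfrak{I}\ll t^{-1}T_m^{-1/2}$ directly. Your appeal to Lemma~\ref{lemma:exponentialintegral}(2) gives the same numerical bound, but your assertion that ``$\Phi'$ has a unique stationary point $z_0\asymp 1$'' is not automatic: the leading contribution to $\Phi'(z)$ is a monotone term of size $T_n$ plus a constant of size $T_m$, and a zero in $[1,2]$ exists only when $m(n_1-n_2)$ has the correct sign. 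When the signs conspire against you there is no stationary point, $|\Phi'|\gg T_m$ throughout, and integration by parts gives $\mathfrak{I}\ll N^{-A}$, which of course is stronger than the claimed bound---so the lemma remains true, but your argument as written needs this extra case distinction. The paper's choice of the van der Corput second-derivative bound avoids the issue entirely, since it requires only the uniform lower bound on $|\rho''|$ that you have already established.
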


\begin{proof}
  (1).
  Let
  $$
    \rho(z) = h_2\Big(M_1z,n_1,q\Big)
  - h_2\Big(M_1z,n_2,q\Big)
  - \frac{mM_1}{p^{\kappa}d} z .
  $$
  Note that
\begin{multline*}
  h_2\Big(M_1z,n_1,q\Big)
  - h_2\Big(M_1z,n_2,q\Big) \\
    =
    \frac{t}{\pi}\left( \frac{p^{\kappa-\lambda}}{(M_1z)^{1/2}}\left(n_1^{1/2}-n_2^{1/2}\right) + \frac{p^{2\kappa-2\lambda}}{2M_1z}\left(n_1-n_2\right) + \sum_{j\geq3} c_j \left(  \frac{p^{\kappa-\lambda}}{(M_1z)^{1/2}}\right)^j\left(n_1^{j/2}-n_2^{j/2}\right)\right).
  \end{multline*}
For $n_1\asymp n_2 \asymp NX^2/Q^2$, we have
$$
    \frac{\partial^k }{\partial z^k} \left(  h_2\Big(M_1z,n_1,q\Big)
  - h_2\Big(M_1z,n_2,q\Big)\right) \ll_k  \frac{NX}{p^{\lambda}RQ} , \quad
    k\geq1.
$$
  By repeated integration by parts, we have $\mathfrak{I}(m,n_1,n_2)$ is negligibly small unless $m\ll \frac{N^2 X d}{R^3 Q t^2p^{\kappa+\lambda}} N^\varepsilon$.

  (2). Note that
  $$
  \begin{aligned}
  \left( \frac{p^{\kappa-\lambda}}{(M_1z)^{1/2}}\right)^j\left(n_1^{j/2}-n_2^{j/2}\right)
  &=\frac{p^{\kappa-\lambda}}{(M_1z)^{1/2}}\left(n_1^{1/2}-n_2^{1/2}\right)
  \left( \frac{p^{\kappa-\lambda}}{(M_1z)^{1/2}}\right)^{j-1}
  \sum_{i+\ell=j-1}n_1^{i/2}n_2^{\ell/2}\\
  &\asymp \frac{p^{\kappa-\lambda}}{(M_1z)^{1/2}}\left(n_1^{1/2}-n_2^{1/2}\right)
  \left(\frac{N^{1/2}}{tRp^{\lambda}}\right)^{j-1}\left(\frac{N^{1/2}X}{Q}\right)^{j-1}\\
  &\ll \frac{p^{\kappa-\lambda}}{(M_1z)^{1/2}}\left(n_1^{1/2}-n_2^{1/2}\right)t^{-(j-1)\varepsilon},
  \end{aligned}
  $$
  for $j \geq 2$.
  We have
  $$
    \frac{\partial^k }{\partial z^k} \left( h_2\Big(M_1z,n_1,q\Big)
  - h_2\Big(M_1z,n_2,q\Big) \right)  \asymp_k
   |n_1^{1/2}-n_2^{1/2}| \frac{N^{1/2}}{p^{\lambda}R} \asymp |n_1-n_2| \frac{Q}{p^{\lambda}RX},
  $$
  for any $ k\geq1$.\\
By repeated integration by parts, we have $\mathfrak{I}(m,n_1,n_2)$ is negligibly small unless $|n_1-n_2| \frac{Q}{p^{\lambda}RX} \asymp  \frac{|m|R^2p^{\kappa}t^2}{Nd}$.
Note that
   $$
    \rho^{''}(z) \asymp |n_1-n_2| \frac{Q}{p^{\lambda}RX} \asymp  \frac{|m|R^2p^{\kappa}t^2}{Nd}.
   $$
 By Lemma \ref{lem: rth derivative test, dim 1} with $r=2$, we have
  $$
    \mathfrak{I}(m,n_1,n_2) \ll t^{-1}\left( \frac{|m|R^2p^{\kappa}t^2}{Nd}\right)^{-1/2} = \left(\frac{dN}{|m|R^2p^{\kappa}t^4}\right)^{1/2}.
  $$

  (3). For $m \ll \frac{dN}{R^2p^{\kappa}t^2}N^\varepsilon$, by repeated integration by parts, $\mathfrak{I}(m,n_1,n_2)$ is negligibly small unless $|n_1-n_2| \ll \frac{p^{\lambda}RX}{Q} N^\varepsilon$,
  in which case $\mathfrak{I}(m,n_1,n_2) \ll t^{-1}$.
\end{proof}

\subsection{The small modulo case}
Assume that $\frac{NX}{p^{\lambda}RQ}\gg t^{1-\varepsilon}$. Then we have
$R \ll \frac{NX}{p^{\lambda}Qt^{1-\varepsilon}}.$
Recall that
\begin{equation*}
  \mathfrak{I}(m,n_1,n_2) = \int_{\mathbb{R}}   W_1\left( z\right)
  I(M_1z,n_1,q)\overline{I(M_1z,n_2,q)}e\left(-\frac{mM_1}{p^{\kappa}d} z \right)  \dd z
\end{equation*}
with
\begin{equation*}
I(M_1z,n,q)= \int_{0}^{\infty}V_3(\xi)e\left(-\frac{t\log \xi}{\pi}+ 2\frac{N^{1/2}n^{1/2}}{qp^{\lambda}}\xi
+2\sigma_2\frac{N^{1/2}M_1^{1/2}z^{1/2}}{qp^{\kappa}}\xi\right)\mathrm{d}\xi.
\end{equation*}

\begin{lemma}
\begin{enumerate}
\item For $m \gg \frac{N^{1/2}d}{M_1^{1/2}q}N^{\varepsilon}$, we have
$\mathfrak{I}(m,n_1,n_2)\ll N^{-A}$.
\item For $\frac{p^{\kappa}d}{M_1}N^{\varepsilon} \ll m \ll \frac{N^{1/2}d}{M_1^{1/2}q}N^{\varepsilon}$, we have
$\mathfrak{I}(m,n_1,n_2)\ll \frac{1}{t}\left(\frac{mNM_1^2}{q^2p^{3\kappa}t^2d}\right)^{-1/3}$.
\item For $m \ll \frac{p^{\kappa}d}{M_1}N^{\varepsilon}$, we have $\mathfrak{I}(m,n_1,n_2)\ll
\frac{1}{t}.$
\end{enumerate}
\end{lemma}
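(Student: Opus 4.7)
The plan is to reduce each case to a suitable derivative test on the $z$-integral, possibly combined with bounds on the inner $\xi$-integral $I(M_1z,n,q)$. Observe first that the $\xi$-phase $h(\xi)=-\tfrac{t\log\xi}{\pi}+2\tfrac{N^{1/2}n^{1/2}}{qp^\lambda}\xi+2\sigma_2\tfrac{N^{1/2}M_1^{1/2}z^{1/2}}{qp^\kappa}\xi$ of $I(M_1z,n,q)$ has second derivative $h''(\xi)=t/(\pi\xi^2)\asymp t$ on $\supp V_3$, so by Lemma~\ref{lem: rth derivative test, dim 1} with $r=2$ we obtain the uniform bound $|I(M_1z,n,q)|\ll t^{-1/2}$. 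Substituting into \eqref{eqn:I(n)definition} and estimating trivially yields $|\mathfrak{I}(m,n_1,n_2)|\ll t^{-1}$, which proves case~(3).

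For cases~(1) and (2) I would swap the order of integration and study the resulting $z$-integral for fixed $\xi_1,\xi_2$. The $z$-dependent part of the phase reads
\begin{equation*}
  \phi(z) = \frac{2N^{1/2}M_1^{1/2}}{qp^\kappa}\bigl(\sigma_2\xi_1-\sigma_2'\xi_2\bigr) z^{1/2}-\frac{mM_1}{p^\kappa d}z =: 2B_0 z^{1/2}-C_0 z,
\end{equation*}
with $|B_0|$ bounded uniformly by $B_0^{\max}:=N^{1/2}M_1^{1/2}/(qp^\kappa)$. For case~(1), when $m\gg \tfrac{N^{1/2}d}{M_1^{1/2}q}N^\varepsilon$, we have $C_0\gg B_0^{\max}N^\varepsilon$, hence $|\phi'(z)|\asymp C_0\gg N^\varepsilon$ uniformly on $\supp W_1$. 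Repeated integration by parts in $z$ then yields an arbitrary power of $C_0^{-1}$ savings, which combined with the uniform bound on the $\xi$-integrals makes $\mathfrak{I}(m,n_1,n_2)\ll N^{-A}$, establishing case~(1).

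Case~(2) with $\tfrac{p^\kappa d}{M_1}N^\varepsilon\ll m\ll \tfrac{N^{1/2}d}{M_1^{1/2}q}N^\varepsilon$ is the most delicate. Here the stationary point $z_0=(B_0/C_0)^2$ may lie outside the support when $|B_0|\gg C_0$, while $C_0\gg N^\varepsilon$ still ensures nontrivial oscillation. Substituting $w=z^{1/2}$ converts the $z$-integral into $\int 2wW_1(w^2)e(2B_0 w-C_0 w^2)\,\mathrm{d}w$, whose phase has constant second derivative $-2C_0$; Lemma~\ref{lem: rth derivative test, dim 1} with $r=2$ gives a $z$-integral bound of order $C_0^{-1/2}$. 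Matching the sharper target bound $t^{-1}(mNM_1^2/q^2p^{3\kappa}t^2 d)^{-1/3}$, however, calls for a combined three-variable stationary phase analysis reflecting the coupling between $\xi$- and $z$-oscillations: the exponent $1/3$ is characteristic of an $r=3$ derivative test applied to the reduced phase that remains after stationary phase in both $\xi_i$, which for $A_1^{(i)}:=\tfrac{N^{1/2}n_i^{1/2}}{qp^\lambda}\asymp t$ takes the form $\tfrac{t}{\pi}\log\tfrac{A_1^{(2)}+\sigma A_2(z)}{A_1^{(1)}+\sigma A_2(z)}-C_0 z$ with $A_2(z)=\tfrac{N^{1/2}M_1^{1/2}z^{1/2}}{qp^\kappa}$.

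The main obstacle I anticipate is the near-diagonal regime $\sigma_2\xi_1\approx \sigma_2'\xi_2$, where $B_0$ becomes small and both the integration by parts and the $r=2$ derivative test in $z$ degenerate. The natural remedy is a dyadic decomposition over $|\sigma_2\xi_1-\sigma_2'\xi_2|\asymp\Delta$: for large $\Delta$ the $r=2$ or $r=3$ derivative test on $z$ is effective, while for small $\Delta$ one exploits the residual $\xi$-oscillation coming from the $-\tfrac{t}{\pi}\log\xi_i+2A_1^{(i)}\xi_i$ terms by a stationary phase argument in $\xi$. Balancing these two contributions, together with tracking the third derivative of the reduced phase in $z$ which is proportional to $A_2(z)$ times the imbalance between the two fractions, should produce the claimed exponent $1/3$.
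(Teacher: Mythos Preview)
Your arguments for cases~(1) and~(3) are correct and match the paper's. For case~(2) you correctly identify that the target exponent $1/3$ points to an $r=3$ derivative test on the reduced $z$-phase obtained after stationary phase in both $\xi_i$, and you even write down the correct form of that phase. But there is a genuine gap: you do not identify what actually produces a \emph{lower bound} on the third derivative, and the obstacle you flag---the near-diagonal regime $\sigma_2\xi_1\approx\sigma_2'\xi_2$---is a red herring. After stationary phase in the $\xi_i$ there are no $\xi$-variables left, so a dyadic decomposition in $|\xi_1-\xi_2|$ is neither needed nor meaningful; the real difficulty is that the reduced phase depends on $n_1,n_2$, and when $n_1$ is close to $n_2$ its third derivative can be arbitrarily small.

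The paper resolves this by a two-step argument you are missing. After stationary phase in each $\xi_i$ (gaining $t^{-1/2}$ apiece) and the substitution $z\mapsto z^2$, the reduced phase is $\rho_3(z)=\tfrac{t}{\pi}\log a(n_1,z)-\tfrac{t}{\pi}\log a(n_2,z)-\tfrac{mM_1}{p^\kappa d}z^2$ with $a(n,z)=\tfrac{N^{1/2}n^{1/2}}{qp^\lambda}+\sigma_2\tfrac{N^{1/2}M_1^{1/2}z}{qp^\kappa}\asymp t$. First, repeated integration by parts shows the integral is negligible unless $\rho_3'$ nearly vanishes, which \emph{forces}
\[
\frac{tN^{1/2}M_1^{1/2}}{qp^\kappa}\Bigl(\frac{1}{a(n_1,z)}-\frac{1}{a(n_2,z)}\Bigr)\asymp \frac{mM_1}{p^\kappa d}.
\]
Second, one computes $\rho_3'''(z)\asymp t\bigl(\tfrac{N^{1/2}M_1^{1/2}}{qp^\kappa}\bigr)^{3}\bigl(a(n_1,z)^{-3}-a(n_2,z)^{-3}\bigr)$ and uses the factorization $a^{-3}-b^{-3}=(a^{-1}-b^{-1})(a^{-2}+a^{-1}b^{-1}+b^{-2})$ together with $a(n_i,z)\asymp t$ to convert the first-derivative constraint into the uniform lower bound $\rho_3'''(z)\gg \tfrac{mM_1}{p^\kappa d}\cdot\bigl(\tfrac{N^{1/2}M_1^{1/2}}{qp^\kappa t}\bigr)^{2}=\tfrac{mNM_1^2}{q^2p^{3\kappa}t^2 d}$. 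Lemma~\ref{lem: rth derivative test, dim 1} with $r=3$ then gives the claimed bound. The link between the first-derivative constraint and the third-derivative lower bound is the missing idea; without it your plan does not close, and no amount of balancing in $\xi$-space substitutes for it.
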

\begin{proof}
  (1).
Plugging \eqref{I(M_1z,n,q)definition} into \eqref{eqn:I(n)definition} and interchanging the order of integrals, we express $\mathfrak{I}(m,n_1,n_2)$ as
\begin{equation}\label{I small modulo}
\begin{aligned}
&\int_{0}^{\infty}\int_{0}^{\infty}V_3(\xi_1)\overline{V_3(\xi_2)}e\left(-\frac{t\log \xi_1}{\pi}+\frac{t\log \xi_2}{\pi}+2\frac{N^{1/2}n_1^{1/2}}{qp^{\lambda}}\xi_1 - \frac{N^{1/2}n_2^{1/2}}{qp^{\lambda}}\xi_2
  \right)\\
&\cdot\left(\int_{\mathbb{R}}  W_1\left( z\right)
e\left( 2\sigma_2\frac{N^{1/2}\left(M_1z\right)^{1/2}}{qp^{\kappa}}\xi_1
-2\sigma_2\frac{N^{1/2}\left(M_1z\right)^{1/2}}{qp^{\kappa}}\xi_2-\frac{mM_1}{p^{\kappa}d} z \right)  \mathrm{d}z \right)\mathrm{d}\xi_1\mathrm{d}\xi_2.
\end{aligned}
\end{equation}
We first deal with the $z$-integral in \eqref{I small modulo}. Let
$$
\rho_2(z)= 2\sigma_2 \frac{N^{1/2}\left(M_1z\right)^{1/2}}{qp^{\kappa}}\xi_1
- 2\sigma_2\frac{N^{1/2}\left(M_1z\right)^{1/2}}{qp^{\kappa}}\xi_2-\frac{mM_1}{p^{\kappa}d} z.
$$
Note that
$$
    \frac{\partial^k }{\partial z^k} \left(2 \sigma_2 \frac{N^{1/2}\left(M_1z\right)^{1/2}}{qp^{\kappa}}\xi_1 -2\sigma_2 \frac{N^{1/2}\left(M_1z\right)^{1/2}}{qp^{\kappa}}\xi_2\right) \ll_k  \frac{N^{1/2}\left(M_1\right)^{1/2}}{qp^{\kappa}} , \quad  k\geq1.
$$
By repeated integration by parts, we have the $z$-integral is negligibly small unless $m\ll \frac{N^{1/2}d}{M_1^{1/2}q}N^{\varepsilon}$.

  (2). Now we consider the case $m\ll \frac{N^{1/2}d}{M_1^{1/2}q}N^{\varepsilon}$.
For $i=1,2$, we have
$$
    I(M_1z,n_i,q)=\int_{0}^{\infty}V_3(\xi_i)e\left(-\frac{t\log \xi_i}{\pi} +2\frac{N^{1/2}n_i^{1/2}}{qp^{\lambda}}\xi_i
  + 2 \sigma_2 \frac{N^{1/2}\left(M_1z\right)^{1/2}}{qp^{\kappa}}\xi_i\right)\mathrm{d}\xi_i.
$$
For $I(M_1z,n_i,q)$, it is negligibly small by repeated integration by parts unless
$t \asymp \frac{N^{1/2}n_i^{1/2}}{qp^{\lambda}}+\sigma_2\frac{N^{1/2}\left(M_1z\right)^{1/2}}{qp^{\kappa}}$, in which case we apply the stationary phase method to the $\xi_i$-integral.
Let $h_4(\xi_i)= -\frac{t\log \xi_i}{\pi} + 2\frac{N^{1/2}n_i^{1/2}}{qp^{\lambda}}\xi_i
+ 2\sigma_2\frac{N^{1/2}\left(M_1z\right)^{1/2}}{qp^{\kappa}}\xi_i$.
Then we have
$$
  h_4'(\xi_i) = -\frac{t}{\pi \xi_i}+ 2\frac{N^{1/2}n_i^{1/2}}{qp^{\lambda}}
+ 2\sigma_2 \frac{N^{1/2}\left(M_1z\right)^{1/2}}{qp^{\kappa}}, \quad
  h_4''(\xi_i) =  \frac{t}{\pi \xi_i^2}  \asymp t.
$$
Then we have the stationary point
$$\xi_{i0}= \frac{t}{2\pi \left(\frac{N^{1/2}n_i^{1/2}}{qp^{\lambda}}
+\sigma_2\frac{N^{1/2}\left(M_1z\right)^{1/2}}{qp^{\kappa}}\right)} \asymp 1,
$$
and
$$
h_4(\xi_{i0})= \frac{t}{\pi}\log\frac{2 e\pi
\left(\frac{N^{1/2}n_i^{1/2}}{qp^{\lambda}}+\sigma_2\frac{N^{1/2}\left(M_1z\right)^{1/2}}{qp^{\kappa}}\right)}{t}.
$$
Hence, we have
\begin{equation}\label{I(M_1z,n_i,q)after Stationary Phase small case }
I(M_1z,n_i,q)=t^{-\frac{1}{2}}
V_{1i}\left(\frac{\frac{N^{1/2}n_i^{1/2}}{qp^{\lambda}}
+\sigma_2\frac{N^{1/2}\left(M_1z\right)^{1/2}}{qp^{\kappa}}}{t/2\pi}\right)
e\left(\frac{t}{\pi}\log\frac{2 e\pi
\left(\frac{N^{1/2}n_i^{1/2}}{qp^{\lambda}}
+\sigma_2\frac{N^{1/2}\left(M_1z\right)^{1/2}}{qp^{\kappa}}\right)}{t}\right),
\end{equation}
where $V_{1i}$ are $1$-inert functions supported on $\xi_{i0} \asymp 1$.\\
We insert \eqref{I(M_1z,n_i,q)after Stationary Phase small case } into \eqref{eqn:I(n)definition}, make a change of variable $z\rightarrow z^2$ and write
$a(n,z):=\frac{N^{1/2}n^{1/2}}{qp^{\lambda}}
+\sigma_2\frac{N^{1/2}M_1^{1/2}z}{qp^{\kappa}}$, then we obtain
\begin{equation}\label{I(m,n_1,n_2)6.10}
\begin{aligned}
\mathfrak{I}(m,n_1,n_2)= & \frac{2}{t}\int_{\mathbb{R}} z W_1\left(z^2\right)
V_{11}\left(\frac{a(n_1,z)}{t/2\pi}\right) \overline{V_{12}\left(\frac{a(n_2,z)}{t/2\pi}\right)}e\left(\rho_3(z)\right)  \dd z,
\end{aligned}
\end{equation}
where
$$
\rho_3(z)=\frac{t}{\pi}\log a(n_1,z)
-\frac{t}{\pi}\log a(n_2,z)
-\frac{mM_1}{p^{\kappa}d} z^2.
$$
Note that
\begin{equation*}
\frac{\partial }{\partial z} \left(\frac{t}{\pi}\log a(n_1,z)
-\frac{t}{\pi}\log a(n_2,z)\right)
=\frac{t}{\pi}\frac{\sigma_2N^{1/2}M_1^{1/2} }{qp^{\kappa}}
\left(\frac{1}{a(n_1,z)}-\frac{1}{a(n_2,z)}\right).
\end{equation*}
For $\frac{p^{\kappa}d}{M_1}N^{\varepsilon} \ll m \ll \frac{N^{1/2}d}{M_1^{1/2}q}N^{\varepsilon}$,
by repeated integration by parts, $\mathfrak{I}(m,n_1,n_2)$ is negligible small unless
\begin{equation}\label{condition of order}
\frac{tN^{1/2}M_1^{1/2}}{qp^{\kappa}}
\left(\frac{1}{a(n_1,z)}-\frac{1}{a(n_2,z)}\right)
\asymp \frac{mM_1}{p^{\kappa}d},
\end{equation}
in which case we consider the third derivative of $\rho_3(z)$
\begin{equation}\label{third derivative}
\begin{aligned}
    \frac{\partial ^3}{\partial z^3}\rho_3(z)
    =&\frac{2t}{\pi}\left(\frac{\sigma_2N^{1/2}M_1^{1/2}}{qp^{\kappa}}\right)^3
    \left(\frac{1}{a(n_1,z)^3}-\frac{1}{a(n_2,z)^3}\right).
\end{aligned}
\end{equation}
Note that
$\frac{1}{a^2}+\frac{1}{ab}+\frac{1}{b^2}\gg \min(\frac{1}{a^2},\frac{1}{b^2})$ for any $a,b\neq0$.
Then we have
\begin{equation}\label{quadratic term}
\begin{aligned}
\frac{1}{a(n_1,z)^3}-\frac{1}{a(n_2,z)^3}&=\left(\frac{1}{a(n_1,z)}-\frac{1}{a(n_2,z)}\right)
\left(\frac{1}{a(n_1,z)^2}+\frac{1}{a(n_1,z)a(n_2,z)}+\frac{1}{a(n_2,z)^2}\right)\\
& \gg \left(\frac{1}{a(n_1,z)}-\frac{1}{a(n_2,z)}\right) \frac{1}{t^2}.
\end{aligned}
\end{equation}
Therefore, by \eqref{condition of order}, \eqref{third derivative} and \eqref{quadratic term} we obtain
\begin{equation*}
\begin{aligned}
    \frac{\partial ^3}{\partial z^3}\rho_3(z)
     \gg t\left(\frac{N^{1/2}M_1^{1/2}}{qp^{\kappa}}\right)^3
    \left(\frac{1}{a(n_1,z)}-\frac{1}{a(n_2,z)}\right)\frac{1}{t^2}
    \asymp \frac{mM_1}{p^{\kappa}d}\left(\frac{N^{1/2}M_1^{1/2}}{qp^{\kappa}t}\right)^2
    =\frac{mNM_1^2}{q^2p^{3\kappa}t^2d}.
    \end{aligned}
\end{equation*}
By Lemma \ref{lem: rth derivative test, dim 1} with $r=3$, we have
$$
z\textrm{-integral}\ll \left(\frac{mNM_1^2}{q^2p^{3\kappa}t^2d}\right)^{-1/3}.
$$
Hence,
$$
\mathfrak{I}(m,n_1,n_2)\ll \frac{1}{t}\left(\frac{mNM_1^2}{q^2p^{3\kappa}t^2d}\right)^{-1/3}.
$$

(3). For $m \ll \frac{p^{\kappa}d}{M_1}N^{\varepsilon}$, we estimate \eqref{I(m,n_1,n_2)6.10} trivially and obtain
$\mathfrak{I}(m,n_1,n_2)\ll\frac{1}{t}.$

\end{proof}

\section{The zero frequency}
In this section, we deal with the terms with $m=0$. The character sum with $m=0$ is
$$
\mathfrak{C}(0,n_1,n_2)=p^{2\kappa}\sum_{d|(n_1-n_2,p^{\lambda})}d\mu\left(p^{\lambda}/d\right).
$$
The contributions of the zero frequency to $\Sigma(d,q)$ and $S^{\flat}(N)$ are denoted by $\Sigma_0(d,q)$ and $S^{\flat}_0(N)$, respectively.
\subsection{Large modulo case: $N^{\varepsilon} \ll \frac{NX}{p^{\lambda}RQ}\ll t^{1-\varepsilon}$}
In this case, we have
$|n_1-n_2| \ll \frac{p^{\lambda}RX}{Q} N^\varepsilon$, $\mathfrak{I}(0,n_1,n_2) \ll \frac{1}{t}$.
The contributions of this part to $\Sigma(d,q)$ and $S^{\flat}(N)$ are denoted by $\Sigma_{01}(d,q)$ and $S^{\flat}_{01}(N)$, respectively.
By \eqref{Sigma(d,q)}, \eqref{K after Poisson} and $M_1\asymp \frac{t^2R^2p^{2\kappa}}{N}$, we get
\begin{equation}\label{Sigma_01(d,q)}
\begin{aligned}
  \Sigma_{01}(d,q) &\ll  \frac{Q^4}{N^2 X^4}\frac{R^2p^{\kappa}t^2}{Nd}
  \sum_{n_1\asymp \frac{NX^2}{Q^2} } |\lambda_g(n_1)|^2
  \left(\sum_{\substack{n_2\asymp \frac{NX^2}{Q^2} \\ n_2\equiv n_1 \bmod dp^{\lambda} \\ |n_1-n_2| \ll \frac{p^{\lambda}RX}{Q} N^{\varepsilon}}}
   \frac{p^{2\kappa+\lambda}}{t}
   + \sum_{\substack{n_2\asymp \frac{NX^2}{Q^2} \\ n_2\equiv n_1 \bmod dp^{\lambda-1} \\ |n_1-n_2| \ll \frac{p^{\lambda}RX}{Q} N^{\varepsilon}}}
    \frac{p^{2\kappa+\lambda-1}}{t}\right)\\
  &\ll \frac{R^2p^{\kappa}t^2}{Nd}\frac{Q^2}{N X^2}N^{\varepsilon}
  \left(1+\frac{p^{\lambda}RX}{Qdp^{\lambda}}\right)\frac{p^{2\kappa+\lambda}}{t}\\
  &\ll \frac{R^2Q^2tN^{\varepsilon}}{N^2 X^2d}p^{3\kappa+\lambda}
  +\frac{R^3QtN^{\varepsilon}}{N^{2} Xd^2}p^{3\kappa+\lambda}.
\end{aligned}
\end{equation}
By \eqref{Sb(N) after CAUCHY}, \eqref{Sigma_01(d,q)} and $N\ll (p^{\kappa}t)^{2+\varepsilon}$, we obtain
\begin{equation*}
\begin{aligned}
  S^{\flat}_{01}(N)
  &\ll \frac{NX^{3/2}R^{1/2}t^{1/2}}{Q^{3/2}p^{(\kappa+\lambda)/2}}\sum_{\substack{q\sim R\\ (q,p)=1}} \frac{1}{q}\sum_{d|q}d
  \left(\frac{R^2Q^2tN^{\varepsilon}}{N^2 X^2d}p^{3\kappa+\lambda}
  +\frac{R^3QtN^{\varepsilon}}{N^{2} Xd^2}p^{3\kappa+\lambda}\right)^{1/2}\\
  &\ll N^{1/2+\varepsilon}\left(p^{3\kappa/2-3\lambda/4}\frac{t^{3/2}}{K^{3/4}}
  +p^{\kappa-\lambda/2}\frac{t}{K^{1/2}}\right).
\end{aligned}
\end{equation*}
\subsection{Small modulo case: $\frac{NX}{p^{\lambda}RQ}\gg t^{1-\varepsilon}$}
We have $\mathfrak{I}(0,n_1,n_2) \ll \frac{1}{t}$, $M_1\ll \frac{NX^2p^{2\kappa-2\lambda}}{Q^2}$ and
$R\ll \frac{NX}{p^{\lambda}Qt^{1-\varepsilon}}$.
The contributions of this part to $\Sigma(d,q)$ and $S^{\flat}(N)$ are denoted by $\Sigma_{02}(d,q)$ and $S^{\flat}_{02}(N)$, respectively.
By \eqref{Sigma(d,q)} and \eqref{K after Poisson}, we get
\begin{equation}\label{Sigma_02(d,q)}
\begin{aligned}
  \Sigma_{02}(d,q) &\ll  \frac{Q^4}{N^2 X^4}\frac{M_1}{p^{\kappa}d}
  \sum_{n_1\asymp \frac{NX^2}{Q^2} } |\lambda_g(n_1)|^2
  \left(\sum_{\substack{n_2\asymp \frac{NX^2}{Q^2} \\ n_2\equiv n_1 \bmod dp^{\lambda} }}   \frac{p^{2\kappa+\lambda}}{t}
  + \sum_{\substack{n_2\asymp \frac{NX^2}{Q^2} \\ n_2\equiv n_1 \bmod dp^{\lambda-1} }}   \frac{p^{2\kappa+\lambda-1}}{t}\right)\\
  &\ll \frac{M_1}{p^{\kappa}d}\frac{Q^2}{N X^2}
  \left(1+\frac{NX^2}{Q^2dp^{\lambda}}\right)\frac{p^{2\kappa+\lambda}}{t}\\
  &\ll \frac{Q^2M_1N^{\varepsilon}}{N X^2dt}p^{\kappa+\lambda}
  +\frac{M_1N^{\varepsilon}}{d^2t}p^{\kappa}.
\end{aligned}
\end{equation}
By \eqref{Sb(N) after CAUCHY}, \eqref{Sigma_02(d,q)} and $N\ll (p^{\kappa}t)^{2+\varepsilon}$, we obtain
\begin{equation*}
\begin{aligned}
  S^{\flat}_{02}(N)&= \sup_{M_1\ll \frac{NX^2p^{2\kappa-2\lambda}}{Q^2}}\frac{N^{5/4}X^{3/2}M_1^{1/4}}{Q^{3/2}p^{(2\kappa+\lambda)/2}}\sum_{\substack{q\sim R\\ (q,p)=1}} \frac{1}{q}\sum_{d|q}d \cdot \Sigma_0^{1/2}\\
  &\ll \sup_{M_1\ll \frac{NX^2p^{2\kappa-2\lambda}}{Q^2}}\frac{N^{5/4}X^{3/2}M_1^{1/4}N^{\varepsilon}}{Q^{3/2}p^{(2\kappa+\lambda)/2}}\sum_{\substack{q\sim R\\ (q,p)=1}} \frac{1}{q}\sum_{d|q}d
  \left(\frac{Q^2M_1N^{\varepsilon}}{N X^2dt}p^{\kappa+\lambda}
  +\frac{M_1N^{\varepsilon}}{d^2t}p^{\kappa}\right)^{1/2}\\
  &\ll N^{1/2+\varepsilon}\left(p^{3\kappa/2-3\lambda/4}\frac{K^{5/4}}{t^{1/2}}+p^{\kappa-\lambda/2}\frac{K^{3/2}}{t^{1/2}}\right).
\end{aligned}
\end{equation*}

\section{The non-zero frequencies}
In this section, we deal with the terms with $m\neq 0$.
\subsection{Large modulo case: $N^{\varepsilon} \ll \frac{NX}{p^{\lambda}RQ}\ll t^{1-\varepsilon}$}
\subsubsection{$m \ll \frac{dN}{R^2p^{\kappa}t^2} N^{\varepsilon}$}
For $m \ll \frac{dN}{R^2p^{\kappa}t^2} N^{\varepsilon}$ and $m \neq 0$, we have
$|n_1-n_2| \ll \frac{p^{\lambda}RX}{Q} N^{\varepsilon}$
and $\mathfrak{I}(m,n_1,n_2) \ll t^{-1}$.
The contributions of this part to $\Sigma(d,q)$ and $S^{\flat}(N)$ are denoted by $\Sigma_{11}(d,q)$ and $S^{\flat}_{11}(N)$, respectively.
By \eqref{Sigma(d,q)}, \eqref{K after Poisson} and Lemma \ref{character sum estimate}(1)(2), we get
\begin{equation}\label{Sigma_{11}(d,q)}
\begin{aligned}
   \Sigma_{11}(d,q) & \ll \frac{Q^4}{N^2 X^4}\frac{R^2p^{\kappa}t^2}{Nd}
  \sum_{n_1\asymp \frac{NX^2}{Q^2} } |\lambda_g(n_1)|^2
  \cdot \bigg(\sum_{\substack{n_2\asymp \frac{NX^2}{Q^2} \\ n_2\equiv n_1 \bmod dp^{\beta-\kappa+\lambda} \\ |n_1-n_2| \ll \frac{p^{\lambda}RX}{Q} N^\varepsilon}}
  \sum_{\substack{m\ll \frac{dN}{R^2p^{\kappa}t^2} N^\varepsilon \\ p^{\kappa-\lambda+\beta-\kappa+\lambda}\mid m }}
  \frac{p^{2\kappa+\lambda+\delta_1}}{t}\\
  &\quad \quad \quad \quad \quad \quad \quad +\sum_{0\leq \ell \leq\lambda-\kappa/2 }
  \sum_{\substack{n_2\asymp \frac{NX^2}{Q^2} \\ n_2\equiv n_1 \bmod dp^{\ell},~p^{\ell}\| n_1-n_2 \\ |n_1-n_2| \ll \frac{p^{\lambda}RX}{Q} N^\varepsilon}}
  \sum_{\substack{m\ll \frac{dN}{R^2p^{\kappa}t^2} N^\varepsilon \\ p^{\kappa-\lambda+\ell}\mid m }}
  \frac{p^{5\kappa/2+2\ell+\delta_1+\delta_2/2}}{t}\bigg)\\
  &\ll\frac{R^2Q^2t^2p^{\kappa}}{N^2X^2d}N^{\varepsilon}
  \left(\left(1+\frac{p^{\lambda}RX}{Qdp^{\beta-\kappa+\lambda}}\right)
  \frac{Nd \cdot p^{2\kappa+\lambda+\delta_1}}{R^2p^{\kappa}t^2p^{\beta}\cdot t}
  +\frac{p^{\lambda}RX}{Qdp^{\ell}}
  \frac{Nd\cdot p^{5\kappa/2+2\ell+\delta_1+\delta_2/2}}{R^2p^{\kappa}t^2p^{\kappa-\lambda+\ell}\cdot t}\right)\\
  &\ll\frac{Q^2N^{\varepsilon}}{NX^2t}p^{3\kappa/2+\lambda+\delta_1+\delta_2/2}
  +\frac{QRN^{\varepsilon}}{NXdt}p^{2\kappa+\lambda+\delta_1+\delta_2}
  +\frac{QRN^{\varepsilon}}{NXdt}p^{3\kappa/2+2\lambda+\delta_1+\delta_2/2}.
\end{aligned}
\end{equation}
By \eqref{Sb(N) after CAUCHY}, \eqref{Sigma_{11}(d,q)} and $N\ll (p^{\kappa}t)^{2+\varepsilon}$, we obtain
\begin{equation*}
\begin{aligned}
  S^{\flat}_{11}(N)&=
  \frac{NX^{3/2}R^{1/2}t^{1/2}}{Q^{3/2}p^{(\kappa+\lambda)/2}}\sum_{\substack{q\sim R\\ (q,p)=1}} \frac{1}{q}\sum_{d|q}d \cdot \Sigma_{11}(d,q)^{1/2}\\
  &\ll N^{1/2+\varepsilon}\left(p^{5\kappa/4-\lambda/2+3/4}\frac{t}{K^{1/2}}
  +p^{\kappa-\lambda/4+1}\frac{t^{1/2}}{K^{1/2}}
  +p^{3\kappa/4+\lambda/4+3/4}\frac{t^{1/2}}{K^{1/4}}\right).
\end{aligned}
\end{equation*}
\subsubsection{$\frac{dN}{R^2p^{\kappa}t^2} N^\varepsilon \ll m \ll \frac{N^2 X d}{R^3 Q t^2p^{\kappa+\lambda}} N^\varepsilon$}
For $m \sim M$ with $\frac{dN}{R^2p^{\kappa}t^2} N^\varepsilon \ll M \ll \frac{N^2 X d}{R^3 Q t^2p^{\kappa+\lambda}} N^\varepsilon$, we have $|n_1-n_2| \asymp \frac{MR^3Xt^2p^{\kappa+\lambda}}{NQd}$ and $\mathfrak{I}(m,n_1,n_2) \ll \left(\frac{dN}{MR^2p^{\kappa}t^4}\right)^{1/2}$.
The contributions of this part to $\Sigma(d,q)$ and $S^{\flat}(N)$ are denoted by $\Sigma_{12}(d,q)$ and $S^{\flat}_{12}(N)$, respectively.
\begin{equation}\label{Sigma_{12}(d,q)}
\begin{aligned}
  \Sigma_{12}(d,q) \ll  &\frac{Q^4}{N^2 X^4}\frac{R^2p^{\kappa}t^2}{Nd}
  \sum_{n_1\asymp \frac{NX^2}{Q^2} } |\lambda_g(n_1)|^2
  \bigg(\sum_{\substack{n_2\asymp \frac{NX^2}{Q^2} \\ n_2\equiv n_1 \bmod dp^{\beta-\kappa+\lambda} \\ |n_1-n_2| \ll \frac{MR^3Xt^2p^{\kappa+\lambda}}{NQd}}}
  \sum_{\substack{m\sim M \\ p^{\kappa-\lambda+\beta-\kappa+\lambda}\mid m }}
  \frac{p^{2\kappa+\lambda+\delta_1}d^{1/2}N^{1/2}}{M^{1/2}Rp^{\kappa/2}t^2}\\
  & \quad \quad \quad \quad \quad \quad
  +\sum_{0\leq \ell \leq\lambda-\kappa/2 }
  \sum_{\substack{n_2\asymp \frac{NX^2}{Q^2} \\ n_2\equiv n_1 \bmod dp^{\ell},~p^{\ell}\| n_1-n_2 \\ |n_1-n_2| \ll \frac{MR^3Xt^2p^{\kappa+\lambda}}{NQd}}}
  \sum_{\substack{m\sim M \\ p^{\kappa-\lambda+\ell}\mid m }}
  \frac{p^{5\kappa/2+2\ell+\delta_1+\delta_2/2}d^{1/2}N^{1/2}}{M^{1/2}Rp^{\kappa/2}t^2}\bigg)\\
 \ll& \frac{RQ^2p^{\kappa/2}N^{\varepsilon}}{M^{1/2}N^{3/2}X^2d^{1/2}}
  \left(\left(1+\frac{MR^3Xt^2p^{\kappa+\lambda}}{NQd^2p^{\beta-\kappa+\lambda}}\right)
  \frac{Mp^{2\kappa+\lambda
  +\delta_1}}{p^{\beta}}
  +\sup_{\ell}\frac{MR^3Xt^2p^{\kappa+\lambda}}{NQd^2p^{\ell}}
  \frac{Mp^{5\kappa/2+2\ell+3/2}}{p^{\kappa-\lambda+\ell}}\right)\\
  \ll&\frac{Q^{3/2}p^{3\kappa/2+\lambda/2+3/2}N^{\varepsilon}}{R^{1/2}X^{3/2}N^{1/2}t}
  +\frac{N^{1/2}X^{1/2}p^{2\kappa-\lambda/2+1}N^{\varepsilon}}{R^{1/2}Q^{1/2}dt}
  +\frac{N^{1/2}X^{1/2}p^{3\kappa/2+\lambda/2+3/2}N^{\varepsilon}}{R^{1/2}Q^{1/2}dt}.
\end{aligned}
\end{equation}
By \eqref{Sb(N) after CAUCHY}, \eqref{Sigma_{12}(d,q)} and $N\ll (p^{\kappa}t)^{2+\varepsilon}$, we obtain
$$
\begin{aligned}
  S^{\flat}_{12}(N)&= \frac{NX^{3/2}R^{1/2}t^{1/2}}{Q^{3/2}p^{(\kappa+\lambda)/2}}\sum_{\substack{q\sim R\\ (q,p)=1}} \frac{1}{q}\sum_{d|q}d \cdot \Sigma_{12}(d,q)^{1/2}\\
  &\ll N^{1/2+\varepsilon}\left(p^{5\kappa/4-\lambda/2+3/4}\frac{t}{K^{1/4}}
  +p^{\kappa-\lambda/4+1}t^{1/2}K^{1/2}
  +p^{3\kappa/4+\lambda/4+3/4}t^{1/2}K^{1/2}\right).
\end{aligned}
$$
\subsection{Small modulo case: $\frac{NX}{p^{\lambda}RQ}\gg t^{1-\varepsilon}$}
\subsubsection{$m \ll \frac{p^{\kappa}d}{M_1}N^{\varepsilon}$}
For $m \ll \frac{p^{\kappa}d}{M_1}N^{\varepsilon}$ and $m\neq0$, we have $\mathfrak{I}(m,n_1,n_2)\ll
\frac{1}{t}.$
The contributions of this part to $\Sigma(d,q)$ and $S^{\flat}(N)$ are denoted by $\Sigma_{13}(d,q)$ and $S^{\flat}_{13}(N)$, respectively.
By \eqref{Sigma(d,q)}, \eqref{K after Poisson} and Lemma \ref{character sum estimate}(1)(2), we get
\begin{equation}\label{Sigma_{13}(d,q)}
\begin{aligned}
 \Sigma_{13}(d,q) &\ll  \frac{Q^4}{N^2 X^4}\frac{M_1}{p^{\kappa}d}
  \sum_{n_1\asymp \frac{NX^2}{Q^2} } |\lambda_g(n_1)|^2
  \bigg(\sum_{\substack{n_2\asymp \frac{NX^2}{Q^2} \\ n_2\equiv n_1 \bmod dp^{\beta-\kappa+\lambda}}}
  \sum_{\substack{m\ll \frac{p^{\kappa}d}{M_1}N^{\varepsilon} \\ p^{\kappa-\lambda+\beta-\kappa+\lambda}\mid m }}
  p^{2\kappa+\lambda+\delta_1}\frac{1}{t}\\
  &\quad \quad \quad \quad \quad \quad \quad \quad \quad+\sum_{0\leq \ell \leq\lambda-\kappa/2 }
  \sum_{\substack{n_2\asymp \frac{NX^2}{Q^2} \\ n_2\equiv n_1 \bmod dp^{\ell}\\ p^{\ell}\| n_1-n_2}}
  \sum_{\substack{m\ll \frac{p^{\kappa}d}{M_1}N^{\varepsilon} \\ p^{\kappa-\lambda+\ell}\mid m }}
  p^{5\kappa/2+2\ell+\delta_1+\delta_2/2}\frac{1}{t}\bigg)\\
 & \ll \frac{Q^2}{NX^2}\frac{M_1}{p^{\kappa}d}
 \bigg(\left(1+\frac{NX^2}{Q^2dp^{\beta-\kappa+\lambda}}\right)
 \frac{\frac{p^{\kappa}d}{M_1}N^{\varepsilon}}{p^{\beta}}
 \frac{p^{2\kappa+\lambda+\delta_1}}{t}
 +\sup_{\ell}\frac{NX^2}{Q^2dp^{\ell}}
 \frac{\frac{p^{\kappa}d}{M_1}N^{\varepsilon}}{p^{\kappa-\lambda+\ell}}
  \frac{p^{5\kappa/2+2\ell+3/2}}{t}\bigg)\\
  & \ll \frac{Q^2p^{3\kappa/2+\lambda+3/2}N^{\varepsilon}}{NX^2t}
  +\frac{p^{2\kappa+2}N^{\varepsilon}}{dt}
  +\frac{p^{3\kappa/2+\lambda+3/2}N^{\varepsilon}}{dt}.
\end{aligned}
\end{equation}
By \eqref{Sb(N) after CAUCHY}, \eqref{Sigma_{13}(d,q)} and $N\ll (p^{\kappa}t)^{2+\varepsilon}$, we obtain
\begin{equation*}
\begin{aligned}
S^{\flat}_{13}(N)&= \sup_{M_1\ll \frac{NX^2p^{2\kappa-2\lambda}}{Q^2}}\frac{N^{5/4}X^{3/2}M_1^{1/4}}{Q^{3/2}p^{(2\kappa+\lambda)/2}}\sum_{\substack{q\sim R\\ (q,p)=1}} \frac{1}{q}\sum_{d|q}d \cdot \Sigma_{13}^{1/2}\\
  &\ll N^{1/2+\varepsilon}\left(p^{5\kappa/4-\lambda/2+3/4}\frac{K}{t^{1/2}}
  +p^{\kappa-\lambda/4+1}\frac{K^{5/4}}{t^{1/2}}
  +p^{3\kappa/4+\lambda/4+3/4}\frac{K^{5/4}}{t^{1/2}}\right).
\end{aligned}
\end{equation*}
\subsubsection{$\frac{p^{\kappa}d}{M_1}N^{\varepsilon} \ll m \ll \frac{N^{1/2}d}{M_1^{1/2}q}N^{\varepsilon}$}
For $\frac{p^{\kappa}d}{M_1}N^{\varepsilon} \ll m \ll \frac{N^{1/2}d}{M_1^{1/2}q}N^{\varepsilon}$, we have
$\mathfrak{I}(m,n_1,n_2)\ll \frac{1}{t}\left(\frac{mNM_1^2}{q^2p^{3\kappa}t^2d}\right)^{-1/3}$.
The contributions of this part to $\Sigma(d,q)$ and $S^{\flat}(N)$ are denoted by $\Sigma_{14}(d,q)$ and $S^{\flat}_{14}(N)$, respectively.
By \eqref{Sigma(d,q)}, \eqref{K after Poisson} and Lemma \ref{character sum estimate}(1)(2), we get
\begin{equation}\label{Sigma_{14}(d,q)}
\begin{aligned}
 \Sigma_{14}(d,q) &\ll  \frac{Q^4}{N^2 X^4}\frac{M_1}{p^{\kappa}d}
  \sum_{n_1\asymp \frac{NX^2}{Q^2} } |\lambda_g(n_1)|^2
  \bigg(\sum_{\substack{n_2\asymp \frac{NX^2}{Q^2} \\ n_2\equiv n_1 \bmod dp^{\beta-\kappa+\lambda}}}
  \sum_{\substack{m\ll \frac{N^{1/2}d}{M_1^{1/2}q}N^{\varepsilon} \\ p^{\kappa-\lambda+\beta-\kappa+\lambda}\mid m }}
  p^{2\kappa+\lambda+\delta_1}\frac{1}{t}\left(\frac{mNM_1^2}{q^2p^{3\kappa}t^2d}\right)^{-1/3}\\
  & \quad \quad \quad \quad \quad \quad
  +\sum_{0\leq \ell \leq\lambda-\kappa/2 }
  \sum_{\substack{n_2\asymp \frac{NX^2}{Q^2} \\ n_2\equiv n_1 \bmod dp^{\ell}\\ p^{\ell}\| n_1-n_2}}
  \sum_{\substack{m\ll \frac{N^{1/2}d}{M_1^{1/2}q}N^{\varepsilon} \\ p^{\kappa-\lambda+\ell}\mid m }}
  p^{5\kappa/2+2\ell+\delta_1+\delta_2/2}\frac{1}{t}\left(\frac{mNM_1^2}{q^2p^{3\kappa}t^2d}\right)^{-1/3}\bigg)\\
\ll&\frac{Q^2M_1^{1/3}q^{2/3}}{N^{4/3}X^2d^{2/3}t^{1/3}}
 \bigg(\left(1+\frac{NX^2}{Q^2dp^{\beta-\kappa+\lambda}}\right)
  \frac{\left(\frac{N^{1/2}d}{M_1^{1/2}q}N^{\varepsilon}\right)^{2/3}
  p^{2\kappa+\lambda
  +\delta_1}}{p^{\beta}}\\
  &\quad \quad \quad \quad \quad \quad \quad \quad\quad +\sup_{\ell}\frac{NX^2}{Q^2dp^{\ell}}
  \frac{\left(\frac{N^{1/2}d}{M_1^{1/2}q}N^{\varepsilon}\right)^{2/3}
  p^{5\kappa/2+2\ell+3/2}}{p^{\kappa-\lambda+\ell}}\bigg)\\
  \ll& \frac{Q^2p^{3\kappa/2+\lambda+3/2}N^{\varepsilon}}{NX^2t^{1/3}}
  +\frac{p^{2\kappa+2}N^{\varepsilon}}{dt^{1/3}}
  +\frac{p^{3\kappa/2+\lambda+3/2}N^{\varepsilon}}{dt^{1/3}}.
\end{aligned}
\end{equation}
By \eqref{Sb(N) after CAUCHY}, \eqref{Sigma_{14}(d,q)} and $N\ll (p^{\kappa}t)^{2+\varepsilon}$, we obtain
\begin{equation*}
\begin{aligned}
S^{\flat}_{14}(N)&= \sup_{M_1\ll \frac{NX^2p^{2\kappa-2\lambda}}{Q^2}}\frac{N^{5/4}X^{3/2}M_1^{1/4}}{Q^{3/2}p^{(2\kappa+\lambda)/2}}\sum_{\substack{q\sim R\\ (q,p)=1}} \frac{1}{q}\sum_{d|q}d \cdot \Sigma_{14}^{1/2}\\
  &\ll N^{1/2+\varepsilon}\left(p^{5\kappa/4-\lambda/2+3/4}\frac{K}{t^{1/6}}
  +p^{\kappa-\lambda/4+1}\frac{K^{5/4}}{t^{1/6}}
  +p^{3\kappa/4+\lambda/4+3/4}\frac{K^{5/4}}{t^{1/6}}\right).
\end{aligned}
\end{equation*}

\section{Conclusion}
By \eqref{the relation between S(N)and Sb(N)} and the upper bounds of $S^{\flat}_{01}(N)$, $S^{\flat}_{02}(N)$, $S^{\flat}_{11}(N)$, $S^{\flat}_{12}(N)$, $S^{\flat}_{13}(N)$,  $S^{\flat}_{14}(N)$, $S^{\flat}_{b0}(N)$, $S^{\flat}_{b1}(N)$, $S^{\flat}_{c0}(N)$, $S^{\flat}_{c1}(N)$, the best choice of parameters is $K=t^{\frac{4}{5}}$ and $\lambda=[\frac{3}{4}\kappa]$,  where $[x]$ denotes the largest integer which does not exceed $x$.
Therefore, we have shown the following bound
\begin{equation}\label{S(N)final bound}
S(N)\ll N^{1/2}
p^{3/4}(p^{\kappa})^{1-\frac{1}{16}+\varepsilon}t^{1-\frac{1}{10}+\varepsilon}.
\end{equation}
Substituting the estimate in \eqref{S(N)final bound} for $S(N)$ into \eqref{Lfunction after AFE},
we obtain
$$
  L\left(\frac{1}{2}+it,f\times g \times \chi\right) \ll_{\varepsilon} p^{\frac{3}{4}}(p^{\kappa})^{1-\frac{1}{16}+\varepsilon}t^{1-\frac{1}{10}+\varepsilon}.
$$
This completes the proof of Theorem \ref{main-theorem}.

\section{The complementary cases}\label{The complementary cases}

In this section we focus on the complementary cases whose estimations can be bounded by that of the main case. And we give the brief proofs for the complementary cases.
Next, we consider the following three cases:
$$
\textrm{case} ~ a \quad \frac{nN}{q^2 p^{2\lambda}}\ll N^{\varepsilon}, ~\frac{NX}{p^{\lambda}RQ}\ll N^{\varepsilon} ~ \textrm{and} ~
\frac{mN}{q^2p^{2\kappa}}\ll N^{\varepsilon},
$$
$$
\textrm{case} ~ b \quad \frac{nN}{q^2 p^{2\lambda}}\asymp \big(\frac{NX}{RQp^{\lambda}}\big)^2 \gg N^{\varepsilon} ~ \textrm{and} ~ \frac{mN}{q^2p^{2\kappa}}\ll N^{\varepsilon},
$$
$$
\textrm{case} ~ c \quad \frac{nN}{q^2 p^{2\lambda}}\ll N^{\varepsilon}, ~\frac{NX}{p^{\lambda}RQ}\ll N^{\varepsilon} ~ \textrm{and} ~
\frac{mN}{q^2p^{2\kappa}}\gg N^{\varepsilon}.
$$

It is easy to see that the contribution from case $a$ to $S(N)$ is negligibly small by repeated integration by parts for \eqref{psi2 case b}.

\subsection{Case b: $\frac{nN}{q^2 p^{2\lambda}}\asymp \big(\frac{NX}{RQp^{\lambda}}\big)^2 \gg N^{\varepsilon}$, $\frac{mN}{q^2p^{2\kappa}}\ll N^{\varepsilon}$}
We first use the same method as in \S 5 to $n$-sum and $u$-integral. Then we use the $\mathrm{GL_2}$ Voronoi summation formula to $m$-sum and obtain that
\begin{equation}\label{Sb(N) before S2 in 10}
\begin{aligned}
S^{\flat}_b(N)=
& \frac{X^{\frac{3}{2}}N^{\frac{1}{2}}}{Q^{\frac{3}{2}}}\sum_{\substack{q\sim R \\ (q,p)=1}} \frac{1}{(qp^{\lambda})^{1/2}} \sideset{}{^\star}\sum_{a\bmod{qp^{\lambda}}}
  \sum_{n\asymp \frac{NX^2}{Q^2}}\frac{\lambda_g(n)}{n}
e\left(\frac{\overline{a}n}{qp^{\lambda}}\right) \\
& \cdot \frac{1}{\tau(\overline{\chi})}\frac{N^{1-it}}{qp^{\kappa}}\sum_{c \mod p^{\kappa}}\overline{\chi}(c)\sum_{m}\lambda_f(m)
e\left(-\frac{\overline{a^{\ast}}m}{qp^{\kappa}}\right)
\Phi_2^{\pm}\left(\frac{mN}{q^2p^{2\kappa}}\right)+ O(N^{-A}).
\end{aligned}
\end{equation}

By Lemma \ref{Phi 2 m small}, $\Phi_2^{\pm}(y)$ is negligible small unless $t\asymp \frac{N^{1/2}n^{1/2}}{qp^{\lambda}}\asymp \frac{NX}{p^{\lambda}RQ}$, in which case
\begin{equation}\label{phi2 case b}
\Phi_2^{\pm}(y)=\int_{0}^{\infty}V(\xi)V_2\left(\frac{n Q^2}{NX^2\xi}\right)
e\left(-\frac{t\log \xi}{2\pi}+ 2\frac{N^{1/2}n^{1/2}}{qp^{\lambda}}\xi^{1/2}\right)J_f^{\pm}(y\xi)\mathrm{d}\xi,
\end{equation}
with $y^k {J_f^{\pm}}^{(k)}(y) \ll_{k, f} N^{\varepsilon}.$

Inserting \eqref{phi2 case b} into \eqref{Sb(N) before S2 in 10} and rearranging the order of summations and integrals, we arrive at
\begin{equation*}
\begin{aligned}
S^{\flat}_b(N)=& \frac{N^{3/2-it}X^{3/2}}{\tau(\overline{\chi})Q^{3/2}p^{(2\kappa+\lambda)/2}}\sum_{\substack{q\sim R\\ (q,p)=1}} \frac{1}{q^{\frac{3}{2}}}\sum_{d|q}d\mu\left(\frac{q}{d}\right)
\sum_{n\asymp\frac{NX^2}{Q^2}}\frac{\lambda_g(n)}{n}\\
&\cdot \sum_{\substack{m\ll \frac{q^2p^{2\kappa}N^{\varepsilon}}{N} \\ n\equiv mp^{\lambda}\overline{p^{2\kappa-\lambda}}\bmod d}}
\lambda_f(m)\mathfrak{C}^{\ast}(m,n,q)\Phi_2^{\pm}\left(\frac{mN}{q^2p^{2\kappa}}\right)\\
\end{aligned}
\end{equation*}
with
$$
\mathfrak{C}^{\ast}(m,n,q)= \sum_{c \mod p^{\kappa}}\overline{\chi}(c)
\sideset{}{^\star}\sum_{b\bmod{p^{\lambda}}}
e\left(\frac{-(\overline{ap^{\kappa-\lambda}+cq})m\bar{q}}{p^{\kappa}}+ \frac{n\bar{q}\bar{b}}{p^{\lambda}}\right).
$$
Applying the Cauchy--Schwarz inequality and the Poisson summation formula to $m$-sum, we have
\begin{equation}\label{Sb(N)caseb}
S^{\flat}_b(N)\ll \frac{NX^{3/2}}{Q^{3/2}p^{(\kappa+\lambda)/2}}\sum_{\substack{q\sim R\\ (q,p)=1}} \frac{1}{q^{\frac{1}{2}}}\sum_{d|q}d \cdot \Sigma_b(d,q)^{1/2},
\end{equation}
where
$$
  \Sigma_b(d,q) \ll  \frac{Q^4R^2p^{\kappa}}{N^3 X^4d}
  \sum_{n_1\asymp \frac{NX^2}{Q^2} } |\lambda_g(n_1)|^2
  \sum_{\substack{n_2\asymp \frac{NX^2}{Q^2} \\ n_2\equiv n_1 \bmod d}}
  \sum_{m\in\mathbb{Z}} |\mathfrak{C}(m,n_1,n_2)\cdot \mathfrak{I}_b(m,n_1,n_2)|,
$$
with
$$
  \mathfrak{C}(m,n_1,n_2) = \sum_{ \gamma \bmod p^{\kappa}}\mathfrak{C}^{\ast}(\gamma,n_1,q)\overline{\mathfrak{C}^{\ast}(\gamma,n_2,q)}
  e\left(\frac{m\left(n_1p^{3\kappa-\lambda}\overline{p^{\kappa+\lambda}}+\gamma d\bar{d}\right)}{dp^{\kappa}}\right)
$$
and
\begin{equation*}\label{eqn:I(n)1}
  \mathfrak{I}_b(m,n_1,n_2) := \int_{\mathbb{R}} V\left( \xi\right)
  \Phi_2^{\pm}\left( \xi\right)\overline{\Phi_2^{\pm}\left(\xi\right)}
  e\left(-\frac{mR^2p^{\kappa}}{Nd} \xi \right)  \dd \xi.
\end{equation*}
\begin{lemma}
\begin{enumerate}
\item If $m \gg \frac{N^{1+\varepsilon}d}{R^2p^{\kappa}}$, we have
$\mathfrak{I}(m,n_1,n_2)\ll N^{-A}$.
\item If $m \ll\frac{N^{1+\varepsilon}d}{R^2p^{\kappa}}$, we have
$\mathfrak{I}(m,n_1,n_2)\ll 1$.
\end{enumerate}
\end{lemma}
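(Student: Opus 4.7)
\emph{Proof plan.} The integral $\mathfrak{I}_b(m,n_1,n_2)$ is a one-dimensional oscillatory integral with compactly supported amplitude $V(\xi)|\Phi_2^{\pm}(\xi)|^{2}$ and a linear phase $e(-mR^{2}p^{\kappa}\xi/(Nd))$. The plan is to establish (2) by the trivial bound and (1) by repeated integration by parts, using the Bessel function estimates \eqref{J K Bessel function derivative} to control the amplitude and its derivatives.

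For (2), I would first confirm that $\Phi_2^{\pm}(\xi)\ll N^{\varepsilon}$ throughout the support of $V$. From the representation \eqref{phi2 case b}, $\Phi_2^{\pm}$ is an integral whose only nontrivial factor is the Bessel term $J_f^{\pm}(y\xi)$, and its argument is bounded by $N^{\varepsilon}$ since $y=mN/(q^{2}p^{2\kappa})\ll N^{\varepsilon}$ in case $b$ and $\xi\asymp 1$ in the support of $V$. Applying \eqref{J K Bessel function derivative} with $k=0$ yields $J_f^{\pm}(y\xi)\ll N^{\varepsilon}$, and hence $|\Phi_2^{\pm}(\xi)|\ll N^{\varepsilon}$ uniformly. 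Bounding the integrand trivially on the compact support of $V$ then gives $\mathfrak{I}_b(m,n_1,n_2)\ll N^{\varepsilon}\ll 1$, which is (2) after absorbing $N^{\varepsilon}$.

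For (1), I would perform repeated integration by parts. The key input is the $1$-inert character of $\Phi_2^{\pm}$ on $\xi\asymp 1$: differentiating \eqref{phi2 case b} $k$ times under the integral sign brings down powers of the inner variable from the Bessel argument, so combining with \eqref{J K Bessel function derivative} one obtains
$$
\xi^{k}\frac{d^{k}}{d\xi^{k}}\Phi_2^{\pm}(\xi)\ll_{k} N^{\varepsilon}
$$
on the support of $V$; the product $V(\xi)|\Phi_2^{\pm}(\xi)|^{2}$ therefore has all $\xi$-derivatives of size $N^{\varepsilon}$. Since the phase has constant derivative $-2\pi m R^{2}p^{\kappa}/(Nd)$, integrating by parts $A$ times produces
$$
\mathfrak{I}_b(m,n_1,n_2)\ll_{A}\left(\frac{Nd}{mR^{2}p^{\kappa}}\right)^{A}N^{\varepsilon}.
$$
For $m\gg N^{1+\varepsilon}d/(R^{2}p^{\kappa})$ each integration by parts saves a factor $N^{-\varepsilon}$, so choosing $A$ sufficiently large yields arbitrary polynomial decay and proves (1).

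The only technical point is the uniform derivative bound on $\Phi_2^{\pm}$ in the small-argument regime, which is routine given \eqref{J K Bessel function derivative}; no stationary phase analysis is needed because the phase is linear in $\xi$, so the whole argument reduces to straightforward integration by parts once the amplitude is under control.
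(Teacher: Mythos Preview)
Your proposal is correct and follows essentially the same approach as the paper: the trivial bound for (2) and repeated integration by parts for (1), using \eqref{J K Bessel function derivative} to control the $y$-derivatives of $\Phi_2^{\pm}$. The paper's proof is extremely terse (it just says ``integration by parts'' and ``trivial bound''), and you have supplied the details accurately; the only minor imprecision is that the two factors $\Phi_2^{\pm}(\xi)\overline{\Phi_2^{\pm}(\xi)}$ in $\mathfrak{I}_b$ actually depend on $n_1$ and $n_2$ separately rather than forming $|\Phi_2^{\pm}(\xi)|^{2}$, but this does not affect your argument since the derivative bounds are uniform in $n$.
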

\begin{proof}
The first result can be done by applying integration by parts with respect to the $\xi$-integral. For $m\ll \frac{N^{1+\varepsilon}d}{R^2p^{\kappa}}$, we have the trivial bound $\mathfrak{I}(m,n_1,n_2)\ll 1$.
\end{proof}
The contribution from the zero frequency to $\Sigma_b(d,q)$ is
\begin{equation}\label{Sigma_{b0}(d,q)}
\Sigma_{b0}(d,q) \ll  \frac{Q^4R^2p^{\kappa}}{N^3 X^4d}
  \sum_{n_1\asymp \frac{NX^2}{Q^2} } |\lambda_g(n_1)|^2
  \left(\sum_{\substack{n_2\asymp \frac{NX^2}{Q^2} \\ n_2\equiv n_1 \bmod dp^{\lambda}}}   p^{2\kappa+\lambda} + \sum_{\substack{n_2\asymp \frac{NX^2}{Q^2} \\ n_2\equiv n_1 \bmod dp^{\lambda-1}}}   p^{2\kappa+\lambda-1}\right).
\end{equation}
Plugging \eqref{Sigma_{b0}(d,q)} into \eqref{Sb(N)caseb}, we get
\begin{equation*}
  S^{\flat}_{b0}(N)\ll N^{1/2+\varepsilon}\left(p^{3\kappa/2-3\lambda/4}\frac{K^{5/4}}{t^{3/2}}
  + p^{\kappa-\lambda/2}\frac{K^{3/2}}{t^{3/2}}\right).
\end{equation*}
The contribution from the non-zero frequencies to $\Sigma_b(d,q)$ is
\begin{equation}\label{Sigma_{b1}(d,q)}
\begin{aligned}
  \Sigma_{b1}(d,q) \ll  &\frac{Q^4R^2p^{\kappa}}{N^3 X^4d}
  \sum_{n_1\asymp \frac{NX^2}{Q^2} } |\lambda_g(n_1)|^2
   \bigg(\sum_{\substack{n_2\asymp \frac{NX^2}{Q^2} \\ n_2\equiv n_1 \bmod dp^{\beta-\kappa+\lambda}}}
  \sum_{\substack{m\ll \frac{dN^{1+\varepsilon}}{R^2p^{\kappa}} \\
  p^{\kappa-\lambda+\beta-\kappa+\lambda}\mid m }}
  p^{2\kappa+\lambda+\delta_1} \\
  & \quad \quad \quad \quad \quad \quad \quad + \sum_{0\leq \ell \leq\lambda-\kappa/2 }
  \sum_{\substack{n_2\asymp \frac{NX^2}{Q^2} \\ n_2\equiv n_1 \bmod dp^{\ell} \\ p^{\ell}\| n_1-n_2}}
  \sum_{\substack{m\ll \frac{dN^{1+\varepsilon}}{R^2p^{\kappa}} \\
   p^{\kappa-\lambda+\ell}\mid m }}
  p^{5\kappa/2+2\ell+\delta_1+\delta_2/2}\bigg),
\end{aligned}
\end{equation}
Plugging \eqref{Sigma_{b1}(d,q)} into \eqref{Sb(N)caseb}, we get
\begin{equation*}
  S^{\flat}_{b1}(N)\ll N^{1/2+\varepsilon}\left(p^{5\kappa/4-\lambda/2+3/4}\frac{K}{t^{1/2}}+p^{\kappa-\lambda/4+1}\frac{K^{5/4}}{t^{1/2}}
  +p^{3\kappa/4+\lambda/4+3/4}\frac{K^{5/4}}{t^{1/2}}\right).
\end{equation*}

\subsection{Case c: $\frac{nN}{q^2 p^{2\lambda}}\ll N^{\varepsilon}$, $\frac{NX}{p^{\lambda}RQ}\ll N^{\varepsilon}$, $\frac{mN}{q^2p^{2\kappa}}\gg N^{\varepsilon}$}

Applying Lemma \ref{voronoiGL2-Maass} to $m$-sum in \eqref{definition of m and n sums} and by  \eqref{The $-$ case}, we are led to estimate
$$
\frac{1}{\tau(\overline{\chi})}\frac{N^{1-it}}{qp^{\kappa}}\sum_{c \mod p^{\kappa}}\overline{\chi}(c)\sum_{m}\lambda_f(m)
e\left(-\frac{\overline{a^{\ast}}m}{qp^{\kappa}}\right)
\Phi_2^{+}\left(\frac{mN}{q^2p^{2\kappa}}\right)
$$
with
\begin{equation}\label{M voronoi asymptotic}
\Phi_2^{+}(y)=y^{-\frac{1}{4}}\int_{0}^{\infty}V(\xi)\xi^{-\frac{1}{4}}
e\left(-\frac{t\log \xi}{2\pi}+ \frac{Nu}{Qqp^{\lambda}}\xi
\pm 2\sqrt{y\xi}\right)\mathrm{d}\xi + O(N^{-A}).
\end{equation}
By repeated integration by parts, $\Phi_2^{+}(y)$ is negligibly small unless $\frac{\sqrt{mN}}{Rp^{\kappa}}\asymp t$, in which case
by making a change of variable $\xi\rightarrow \xi^{2}$ and applying the stationary phase method
we have
$$
\Phi_2^{+}(y)= 2y^{-\frac{1}{4}}t^{-1/2}e\left(-\frac{t}{\pi} \log {\frac{Rtp^{\kappa}}{2\pi e \sqrt{mN}}}\right)W\left(\frac{mN}{R^2p^{2\kappa}t^2}\right) +O(N^{-A}).
$$

Then we apply the $\mathrm{GL_2}$ Voronoi summation formula to $n$-sum and making a change of variable $u \rightarrow Xu$, we have

\begin{equation}
\begin{aligned}
S^{\flat}_c(N)
\ll &\frac{XN^{7/4+\varepsilon}}{p^{2\lambda+\kappa}Q\sqrt{t}}\sum_{\substack{q \sim R\\ (q,p)=1}}\frac{1}{q^{5/2}}
\sideset{}{^\star}\sum_{a\bmod{qp^{\lambda}}}\sum_{c \mod p^{\kappa}}\overline{\chi}(c)\sum_{\eta=\pm 1}\sum_{n}\lambda_g(n)e\left(\eta\frac{\overline{a}n}{qp^{\lambda}}\right)
\sum_{m}\frac{\lambda_f(m)}{m^{1/4}}
 e\left(\frac{-\overline{a^{\ast}}m}{qp^{\kappa}}\right)\\
& \cdot \int_{\mathbb{R}}W_q\left(u\right)g(q,Xu)
\Phi_1^{\sgn{\eta}}\left(\frac{nN}{q^2p^{2\lambda}}\right)
e\left(-\frac{t}{\pi} \log {\frac{Rtp^{\kappa}}{2\pi e \sqrt{mN}}}\right)W\left(\frac{mN}{R^2p^{2\kappa}t^2}\right) \mathrm{d}u.
\end{aligned}
\end{equation}
By Lemma \ref{Phi 1 analysis}(3),
\begin{equation}\label{Phi 1 y small}
\Phi_1^{\sgn{\eta}}(y)=\int_{0}^{\infty}W(\xi)
e\left(- \frac{NXu\xi}{Qqp^{\lambda}}\right)J_g^{\sgn{\eta}}(y\xi)\mathrm{d}\xi,
\end{equation}
with $y^k {J_g^{\sgn{\eta}}}^{(k)}(y) \ll_{k, g} N^{\varepsilon}.$

Applying the Cauchy--Schwarz inequality, we get
\begin{equation}\label{Sb(N)case c}
\begin{aligned}
S^{\flat}_c(N)\ll \frac{XN^{3/2}R^{1/2}}{p^{2\lambda+\kappa/2}Q}\sum_{\substack{q \sim R \\ (q,p)=1}}\frac{1}{q^{5/2}} \sum_{d|q}d \cdot \Sigma_c(d,q)^{1/2},
\end{aligned}
\end{equation}
where
\[
\Sigma_c(d,q) :=
\sum_{m}W_2\left(\frac{m}{\frac{R^2p^{2\kappa}t^2}{N}}\right)\\
   \bigg|\sum_{\substack{n\ll \frac{q^2p^{2\lambda}N^{\varepsilon}}{N} \\ n\equiv mp^{\lambda}\overline{p^{2\kappa-\lambda}}\bmod d}}\lambda_g(n)
  \mathfrak{C}^{\ast}(m,n,q)J(m,n) \bigg|^2,
\]
with
$$
\mathfrak{C}^{\ast}(m,n,q)= \sum_{c \mod p^{\kappa}}\overline{\chi}(c)
\sideset{}{^\star}\sum_{b\bmod{p^{\lambda}}}
e\left(\frac{-\overline{a^{\ast}}m\bar{q}}{p^{\kappa}}+ \frac{\eta n\bar{q}\bar{b}}{p^{\lambda}}\right)
$$
and
$$
J(m,n)= \int_{\mathbb{R}} W_q\left(u\right)g(q,Xu) e\left(-\frac{t}{\pi} \log {\frac{Rtp^{\kappa}}{2\pi e \sqrt{mN}}} \right)
\Phi_1^{\sgn{\eta}}\left(\frac{nN}{q^2p^{2\lambda}}\right)W\left(\frac{mN}{R^2p^{2\kappa}t^2}\right)  \mathrm{d}u.
$$

We apply the Poisson summation formula on $m$ with modulo $dp^{\kappa}$ and get
\begin{equation*}
  \Sigma_c(d,q) \ll \frac{R^2p^{\kappa}t^2}{Nd}
  \sum_{n_1\ll \frac{q^2p^{2\lambda}N^{\varepsilon}}{N}}  |\lambda_g(n_1)|^2
  \sum_{\substack{n_2\ll \frac{q^2p^{2\lambda}N^{\varepsilon}}{N}  \\ n_2\equiv n_1 \bmod d}}\sum_{m\in\mathbb{Z}}|\mathfrak{C}(m,n_1,n_2)\mathfrak{I}_c(m,n_1,n_2)|,
\end{equation*}
where
$$
  \mathfrak{C}(m,n_1,n_2) = \sum_{ \gamma \bmod p^{\kappa}}\mathfrak{C}^{\ast}(\gamma,n_1,q)\overline{\mathfrak{C}^{\ast}(\gamma,n_2,q)}
  e\left(\frac{m\left(n_1p^{3\kappa-\lambda}\overline{p^{\kappa+\lambda}}+\gamma d\bar{d}\right)}{dp^{\kappa}}\right)
$$
and
\begin{equation}
  \mathfrak{I}_c(m,n_1,n_2) = \int_{\mathbb{R}} W_2\left( \xi\right)
  J\left(\frac{R^2p^{2\kappa}t^2\xi}{N}, n_1\right)\overline{J\left(\frac{R^2p^{2\kappa}t^2\xi}{N}, n_2\right)}
  e\left(-\frac{mR^2p^{\kappa}t^2}{Nd} \xi \right)  \dd \xi.
\end{equation}

\begin{lemma}
\begin{enumerate}
\item If $m \gg \frac{N^{1+\varepsilon}d}{R^2p^{\kappa}t^2}$, we have
$\mathfrak{I}_c(m,n_1,n_2)\ll N^{-A}$.
\item If $m \ll \frac{N^{1+\varepsilon}d}{R^2p^{\kappa}t^2}$, we have
$\mathfrak{I}_c(m,n_1,n_2)\ll 1$.
\end{enumerate}
\end{lemma}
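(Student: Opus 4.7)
The plan is to reduce $\mathfrak{I}_c$ to an essentially non-oscillatory-amplitude integral against the single phase $e(-mR^2p^{\kappa}t^2\xi/(Nd))$, by exploiting a cancellation when one forms the product $J \cdot \overline{J}$.

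First I would substitute $m \mapsto R^2p^{2\kappa}t^2\xi/N$ inside the definition of $J$. Under this substitution $\sqrt{mN} = Rp^{\kappa}t\sqrt{\xi}$, so the argument of the logarithm in the phase collapses to $1/(2\pi e\sqrt{\xi})$ and the cutoff $W(mN/R^2p^{2\kappa}t^2)$ becomes $W(\xi)$. A direct computation gives
\[
e\!\left(-\frac{t}{\pi}\log\frac{Rtp^{\kappa}}{2\pi e\sqrt{mN}}\right) = e\!\left(\tfrac{t}{\pi}\log(2\pi e)\right)\cdot \xi^{it},
\]
so that
\[
J\!\left(\frac{R^2p^{2\kappa}t^2\xi}{N},\,n\right) = W(\xi)\,\xi^{it}\cdot c_n,
\]
where $c_n := e(\tfrac{t}{\pi}\log(2\pi e))\int_{\mathbb{R}} W_q(u)\,g(q,Xu)\,\Phi_1^{\sgn\eta}\!\left(\tfrac{nN}{q^2p^{2\lambda}}\right)du$ is independent of $\xi$. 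In the regime of case $c$, Lemma~\ref{Phi 1 analysis}(3) together with the rapid decay of $g$ forces $c_n \ll N^{\varepsilon}$.

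Forming $J\cdot\overline{J}$, the large unimodular factors $\xi^{it}$ and $\xi^{-it}$ cancel exactly, and we obtain
\[
\mathfrak{I}_c(m,n_1,n_2) = c_{n_1}\overline{c_{n_2}}\int_{\mathbb{R}} W_2(\xi)\,W(\xi)^2\, e\!\left(-\frac{mR^2p^{\kappa}t^2}{Nd}\,\xi\right)d\xi.
\]
The amplitude $W_2(\xi)W(\xi)^2$ is a fixed smooth function of compact support whose derivatives are bounded independent of all parameters, so only the explicit phase in $\xi$ remains. For part $(2)$, the trivial bound gives $\mathfrak{I}_c\ll |c_{n_1}c_{n_2}|\ll N^{\varepsilon}\ll 1$ after absorbing $N^{\varepsilon}$ into the implied constant. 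For part $(1)$, when $m\gg N^{1+\varepsilon}d/(R^2p^{\kappa}t^2)$, the frequency of the $\xi$-phase satisfies $mR^2p^{\kappa}t^2/(Nd)\gg N^{\varepsilon}$, and iterated integration by parts on the smooth compactly supported amplitude yields $\mathfrak{I}_c\ll N^{-A}$ for arbitrary $A$.

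The main subtlety is verifying that the logarithmic phase of size $t$ really cancels in the product $J\overline{J}$; this depends on the clean shape of $\sqrt{mN}=Rp^{\kappa}t\sqrt{\xi}$ after substitution and on the fact that the $u$-integral $c_n$ carries no dependence on $\xi$, since $\Phi_1^{\sgn\eta}$ and $W_q(u)g(q,Xu)$ are functions of $n$ and $u$ only. Once this independence is established, the rest of the argument is a routine integration-by-parts analysis, mirroring the proof of the analogous lemma in case $b$.
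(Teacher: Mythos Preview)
Your proposal is correct and follows essentially the same route as the paper: integration by parts in $\xi$ for part~(1) and the trivial bound for part~(2). The paper's proof is a two-line sketch that takes for granted the key point you make explicit, namely that the $m$-dependent phase $e\big(-\tfrac{t}{\pi}\log\tfrac{Rtp^{\kappa}}{2\pi e\sqrt{mN}}\big)$ and cutoff $W(mN/R^2p^{2\kappa}t^2)$ factor out of the $u$-integral, so that $J(m,n)=A(m)\,c_n$ and hence $J\overline{J}=|A|^2 c_{n_1}\overline{c_{n_2}}$ with $|A|^2=W(\xi)^2$ smooth and non-oscillatory; your observation that the $\xi^{it}$ factors cancel is just this factorization seen from another angle. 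One cosmetic point: the bound you obtain in~(2) is really $\mathfrak{I}_c\ll N^{\varepsilon}$ rather than $\ll 1$, but this is harmless in context and matches the paper's usual conventions on $N^{\varepsilon}$-losses.
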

\begin{proof}
The first result can be done by applying integration by parts with respect to the $\xi$-integral. For $m\ll \frac{N^{1+\varepsilon}d}{R^2p^{\kappa}t^2}$, we have the trivial bound $\mathfrak{I}_c(m,n_1,n_2)\ll 1$.
\end{proof}
The contribution from the zero frequency to $\Sigma_c(d,q)$ is
\begin{equation}\label{Sigma_{c0}(d,q)}
  \Sigma_{c0}(d,q) \ll \frac{R^2p^{\kappa}t^2}{Nd}
  \sum_{n_1 \ll \frac{q^2p^{2\lambda}N^{\varepsilon}}{N}} |\lambda_g(n_1)|^2
  \left(\sum_{\substack{n_2 \ll \frac{q^2p^{2\lambda}N^{\varepsilon}}{N} \\ n_2\equiv n_1 \bmod dp^{\lambda}}}   p^{2\kappa+\lambda} + \sum_{\substack{n_2 \ll \frac{q^2p^{2\lambda}N^{\varepsilon}}{N} \\ n_2\equiv n_1 \bmod dp^{\lambda-1}}}   p^{2\kappa+\lambda-1}\right).
\end{equation}
Plugging \eqref{Sigma_{c0}(d,q)} into \eqref{Sb(N)case c}, we get
\begin{equation*}
  S^{\flat}_{c0}(N)\ll N^{1/2+\varepsilon}\left(p^{3\kappa/2-3\lambda/4}\frac{t^{3/2}}{K^{5/4}}
  +p^{\kappa-\lambda/2}\frac{t}{K^{3/2}}\right).
\end{equation*}
The contribution from the non-zero frequencies to $\Sigma_c(d,q)$ is
\begin{equation}\label{Sigma_{c1}(d,q)}
\begin{aligned}
  \Sigma_{c1}(d,q)\ll  &\frac{R^2p^{\kappa}t^2}{Nd}
  \sum_{n_1 \ll \frac{q^2p^{2\lambda}N^{\varepsilon}}{N} } |\lambda_g(n_1)|^2
   \bigg(\sum_{\substack{n_2\ll \frac{q^2p^{2\lambda}N^{\varepsilon}}{N} \\ n_2\equiv n_1 \bmod dp^{\beta-\kappa+\lambda}}}
  \sum_{\substack{m \ll \frac{N^{1+\varepsilon}d}{R^2p^{\kappa}t^2} \\
  p^{\kappa-\lambda+\beta-\kappa+\lambda}\mid m }}
  p^{2\kappa+\lambda+\delta_1}\\
  & \quad \quad \quad \quad \quad \quad \quad \quad \quad \quad + \sum_{0\leq \ell \leq\lambda-\kappa/2 }
  \sum_{\substack{n_2\ll \frac{q^2p^{2\lambda}N^{\varepsilon}}{N} \\ n_2\equiv n_1 \bmod dp^{\ell}\\
  p^{\ell}\| n_1-n_2}}
  \sum_{\substack{m \ll \frac{N^{1+\varepsilon}d}{R^2p^{\kappa}t^2}\\ p^{\kappa-\lambda+\ell}\mid m }}
  p^{5\kappa/2+2\ell+\delta_1+\delta_2/2}\bigg).
\end{aligned}
\end{equation}
Plugging \eqref{Sigma_{c1}(d,q)} into \eqref{Sb(N)case c}, we get
\begin{equation*}
  S^{\flat}_{c1}(N)\ll N^{1/2+\varepsilon}\left(p^{5\kappa/4-\lambda/2+3/4}\frac{t}{K}+p^{\kappa-\lambda/4+1}\frac{t^{1/2}}{K^{5/4}}
  +p^{3\kappa/4+\lambda/4+3/4}\frac{t^{1/2}}{K^{5/4}}\right).
\end{equation*}

\end{document}